%
\documentclass{article}

\setlength{\oddsidemargin}{0.25in}

\setlength{\textwidth}{6in}
\usepackage{latexsym}
\usepackage{amsmath}
\usepackage{amsfonts}
\usepackage{graphicx}
\usepackage{color}
\usepackage{hyperref}
\usepackage{enumitem}
\newlist{enumthm}{enumerate}{1}
\setlist[enumthm]{label=(\alph{*})}
\setlength{\topmargin}{-0.25in}
\setlength{\textheight}{8in}

\usepackage{theorem}

\renewcommand{\div}{\operatorname{div}}

\theoremheaderfont{\scshape}
\newtheorem{theorem}{Theorem}[section]

\newtheorem{lemma}[theorem]{Lemma}

\newtheorem{proposition}{Proposition}
\newtheorem{algo}[theorem]{Algorithm}
\theorembodyfont{\normalfont}
\newtheorem{remark}{Remark}[section]
\newtheorem{exemple}{Exemple}[section]

\newenvironment{proof}{{\bf Proof }}{\hbox{~} \hfill \rule{0.5em}{0.5em}\\}

\begin{document}

\title{Existence and uniqueness of solution for semi  linear conservation laws with velocity field in $L^{\infty} $ 
\footnote{{This work is supported by  NLAGA Project (Non Linear Analysis, Geometry and Applications Project).\newline
The authors declare that there is no conflict of interest regarding the publication of this paper.
}}}         

\date{}          

\maketitle

\centerline{\scshape Souleye Kane \footnote{souleye@ucad.edu.sn}}
\medskip
{\footnotesize
\centerline{ Universit\'e Cheikh Anta Diop de Dakar.}
\centerline{  Faculty des Sciences et Techniques }
} 
\medskip
\centerline{\scshape S.Fallou Samb  \footnote{serignefallou.samb@ucad.edu.sn}}
{\footnotesize
\centerline{ Universit\'e Cheikh Anta Diop de Dakar.}
\centerline{Departement de Math\'ematiques et Informatique}
 } 
 
\medskip
\centerline{\scshape Diaraf  Seck \footnote{diaraf.seck@ucad.edu.sn}}
\medskip
{\footnotesize
 \centerline{Universit\'e Cheikh Anta Diop de Dakar, BP 16889 Dakar Fann,}
  \centerline{  Ecole Doctorale de
                   Math\'ematiques et Informatique. }

}
\pagestyle{myheadings}
 \renewcommand{\sectionmark}[1]{\markboth{#1}{}}
\renewcommand{\sectionmark}[1]{\markright{\thesection\ #1}}
\begin{abstract}\noindent In this paper we extend results obtained in \cite{BP} and \cite{KB}. By considering a semi linear conservation law with velocity in $ L^{\infty} $, we prove by fixed point arguments existence and uniqueness result and even in a penalized situation.
\end{abstract}
Keywords: transport equations, semi linear PDE,  fixed point methods, STILS method, conservation laws, advection-reaction, finite element method, Newton's method, Picard's iteration.\\
\section{Introduction }
This paper deals  about semi linear conservations laws  with velocity field in $L^{\infty}.$ Our goal is  twofold. On the one hand, the focus is to propose a  generalization of space time integrated  least square(STILS)  method introduced by O. Besson and J. Pousin in \cite{BP}  for  linear conservation laws to semi linear ones. The STILS method has been widely studied    in numerous linear cases. Our aim is to introduce a non linearity  in the source term and look for  theoretical methods   to prove existence and uniqueness results. For this, we shall propose  methods combining  variational and topological methods. \\
 To reach this aim, we shall use two fixed point  theorems. The first one is the Banach's fixed point theorem and the second is due to  Schauder. In this latter case, we shall need a penalization argument. \\
 On the other hand, we endeavor  to propose  numerical methods to analyse  semi linear boundary value problems. We shall use finite element methods combined with Picard's iteration and Newton's methods.\\ 
 Finite element method is known to produce spurious oscillations and add diffusions in the orthogonal directions of integral curve when convection-dominated problem is solved see \cite{LP} and  references therein. To remedy it, the space time integrated least square method  has been introduced in  finite element context  by  H. Nguyen and J. Reynen in \cite{HJ} for solving advection-diffusion equation.
And a time marching approach of STILS   has been proposed by  O. Besson and  G. De  Montmollin  in \cite{GM} for solving numerically  linear transport equation using the finite element method with $div(u)=0$. To  get discrete maximum principle and  remove the oscillations produced by the STILS method, J. Pousin, K. Benmansour, E. Bretin and L. Piffet in \cite{KB}, added to the formulation a constraint of positivity and a penalization of the total variation.\\
Before presenting the organization of our work, let us point out that interesting works  on the SILS method have been already  realized. We quote some among them closely related to our theoretical  works. In fact,  it  been has been  used by P.Azerad and O. Besson in \cite{AZ1} to give a coercive variational formulation to the transport equation with a free divergence  $\mathcal C ^1$ regular   velocity vector field.  
Existence and uniqueness of Space Time Least Square solution of linear conservation law with velocity field in $L^{\infty}$  is proved   in \cite{BP} by O. Besson and J. Pousin. And in the same paper, these latter deduce  a   maximum  principle result  from Stampacchia’s theorem and   have established
the comparison between the least squares solution  and the renormalized solution of these equations.\\
 

The paper is organized as follows. In the next section we shall do the presentation of the problem with some useful mathematical tools for our study. The third section is devoted to the existence and uniqueness results. The main used arguments are fixed point theorems (Banach-Picard's  theorem, Schauder's Theorem). And in  the last section, we propose  two  news numerical methods for computing the solution by using fixed point algorithm.    
\section{Position of the problem}
\subsection{Statement of the  aim and  functional setting}
Let  $\Omega \subset \mathbb{R}^{d}, (d \in \mathbb{N}^{*} )$  be a domain with a Lipschitz  boundary $ \partial \Omega $ satisfying the cone proprety.  Let us take $T>0$ , a set  $  Q= \Omega \times ]0,T[ $ and  consider an advection velocity $ u:Q \rightarrow \mathbb{R}^{d} $ with the following regularity  property
$$u\in L^{\infty}(Q)^{d}\ \text{with}\  \div(u)\in L^{\infty}(Q).    $$
Let $ f:\mathbb{R}\longrightarrow \mathbb{R} $  be a function such that  $ f \in W^{1,\infty} (\mathbb{R}). $ In some  situations we can consider   $f$ as a $ k-$ Lipschitz,  for $ k $ small enough.\\

The first question we will look is to  find a space time least square solution for  the following boundary value problem   

\begin{equation}\label{SLCL}
\left \{ \begin{array}{l} 
 \frac{\partial c}{\partial t}+\div(uc)=f(c)  \text\ {in}\ Q \\
c(x,0)= c_{0}(x) \mbox{ in } \Omega\\
c(x,t)=c_{1}(x,t) \ \mbox{on} \ \Gamma_{-} \end{array}
\right .
\end{equation}
where
$$ \Gamma_{-} = \{x \in \partial \Omega :(n(x),u(x,t))< 0; \, \,   \forall t \in (0, T)  \}. $$ 

and $ (.,.) $  is the inner product in  $ \mathbb{R}^{d} $, $ n(x) $ is the outer normal to $ \partial \Omega $ at point $x $. For the sake of simplicity one assume that $  \Gamma_{-} $ does not depend on the time t.\\
Let us  consider $$u\in L^{\infty}(Q)^{d}\ \text{such that}\  \div(u)\in L^{\infty}(Q),$$   set $\widetilde{u}=(1,u_{1},u_{2},....,u_{d}) $ and $\widetilde{n}(x,t)$ the outer normal to $\partial Q$ at $(x,t)$.\\

We shall use the notation $\mid E \mid$ to mean the Lebesgue measure of a set $E$ throughout this paper.
Let us recall that,  the space-time incoming flow boundary is given by   $$\partial Q_{-}=
\big\lbrace (x,t)\in \partial Q ,(\widetilde{u}(x,t),\widetilde{n}(x,t)  ) <0  \big\rbrace    = \Omega \times \lbrace 0 \rbrace \cup \Gamma_{-} \times ]0,T[.  $$  
The incoming flow boundary condition in space-time is defined as follows  
$$c_{b}(x,t)= \left \{ \begin{array}{l}
c_{0}(x) \mbox{ if } t=0\\ c_{1}(x,t) \ \mbox{on} \ \Gamma_{-} \end{array}
\right .$$

 We introduce the following norm   defined by: 
 \begin{enumerate}
 \item 
$  \Vert \phi \Vert^{2} =\Vert \phi \Vert_{L^{2}(Q)} ^{2}+\Vert\widetilde{div}(\widetilde{u}\phi)  \Vert_{L^{2}(Q)} ^{2} -\displaystyle \int_{ \partial Q_{-} } \phi^{2}(\widetilde{u},\widetilde{n}) \mathrm{d}s   $
for all $ \phi \in  D(\overline{Q}) .$\\ Where 
\item $\,\,\,\,\,\, \widetilde{\nabla}\phi=(\frac{\partial \phi}{\partial t},\frac{\partial \phi}{\partial x_{1}} ,\frac{\partial \phi}{\partial x_{1}},..    ,\frac{\partial \phi}{\partial x_{d}}  ); $ 
\item $\,\,\,\,\,\, \widetilde{\div}(\widetilde{u}\phi)= \frac{\partial \phi}{\partial t}+\displaystyle  \sum_{i=1}^{d}\frac{\partial (\phi u )}{\partial x_{i}} ;$

\item  And the Sobolev space $ H(u,Q) =\overline{D(\overline{Q})}^{  \Vert. \Vert };  $  

\item  Note	 that if $u$ is  regular enough for  instance $u\in L^{\infty}(Q)^{d}\ \text{with}\  \div(u)\in L^{\infty}(Q),    $ then  $\,\,\,\,\,\, H(u,Q)\cap L^{\infty}=\big\lbrace \phi \in L^{2}(Q);  \widetilde{\div}(\widetilde{u}\phi)\in L^{2}(Q),\phi_{/ \partial Q_{-}}  \in  L^{2}(\partial Q_{-};\mid(\widetilde{u},\widetilde{n})\mid  ) \rbrace \cap L^{\infty}  $
  for more details see \cite{BP}.

\end{enumerate}
Before proceeding further, let us  remind   the following theorems that will be useful for our work and for their proofs, we invite the reader to see \cite{BP}.
\begin{theorem}\label{trace21}
Let us consider  $u\in L^{\infty}(Q)^{d}\ \text{with}\  \div(u)\in L^{\infty}(Q) $.Then  the normal trace of u $ (\widetilde{u},\widetilde{n}) \in L^{\infty}( \partial Q).$
\end{theorem}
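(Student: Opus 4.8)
\medskip
\noindent\textbf{Proof proposal.}
My plan is to deduce the statement from the following general fact, which I would establish first: \emph{if $\mathcal O\subset\R^{N}$ is a bounded domain whose boundary is Lipschitz and satisfies the cone property, and if $v\in L^{\infty}(\mathcal O)^{N}$ with $\div(v)\in L^{\infty}(\mathcal O)$, then the normal trace $(v,n)$ belongs to $L^{\infty}(\partial\mathcal O)$, with $\|(v,n)\|_{L^{\infty}(\partial\mathcal O)}\le C\,\|v\|_{L^{\infty}(\mathcal O)}$ for a constant $C=C(\mathcal O)$.} Theorem~\ref{trace21} then follows by taking $N=d+1$, $\mathcal O=Q$ and $v=\widetilde u=(1,u_{1},\dots,u_{d})$: by hypothesis $\widetilde u\in L^{\infty}(Q)^{d+1}$ and $\widetilde{\div}(\widetilde u)=\div(u)\in L^{\infty}(Q)$, while $\partial Q=(\overline\Omega\times\{0\})\cup(\overline\Omega\times\{T\})\cup(\partial\Omega\times[0,T])$ inherits the Lipschitz regularity and the cone property from $\partial\Omega$.

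\medskip
\noindent For the general fact I would argue by duality through a thin boundary layer. Since $\div(v)\in L^{\infty}(\mathcal O)\subset L^{2}(\mathcal O)$, the usual trace theorem on $H(\div,\mathcal O)$ (recalled in \cite{BP}) already provides $(v,n)\in H^{-1/2}(\partial\mathcal O)$ together with Green's formula
\[
\langle (v,n),\varphi\rangle_{\partial\mathcal O}=\int_{\mathcal O}\varphi\,\div(v)\,\mathrm dx+\int_{\mathcal O}v\cdot\nabla\varphi\,\mathrm dx,\qquad \varphi\in C^{1}(\overline{\mathcal O}).
\]
Given $g\in C^{1}(\partial\mathcal O)$, I would then construct, using charts flattening $\partial\mathcal O$, a cutoff $\chi\big(\mathrm{dist}(\cdot,\partial\mathcal O)/\delta\big)$ of the distance function, and transport of $g$ along the cone directions, a family $\varphi_{\delta}\in C^{1}(\overline{\mathcal O})$ (built Lipschitz, then mollified) with $\varphi_{\delta}|_{\partial\mathcal O}=g$, $\mathrm{supp}\,\varphi_{\delta}\subset\{\mathrm{dist}(\cdot,\partial\mathcal O)<\delta\}$, and
\[
\|\varphi_{\delta}\|_{L^{1}(\mathcal O)}\le C\,\delta\,\|g\|_{L^{1}(\partial\mathcal O)},\qquad \|\nabla\varphi_{\delta}\|_{L^{1}(\mathcal O)}\le C\,\|g\|_{L^{1}(\partial\mathcal O)}+C\,\delta\,\|\nabla_{\tau}g\|_{L^{1}(\partial\mathcal O)},
\]
where the $\delta^{-1}$ blow-up of $\nabla\chi$ is exactly offset by the $O(\delta)$ width of the layer (the coarea formula controlling the surface measures of the level sets $\{\mathrm{dist}(\cdot,\partial\mathcal O)=s\}$). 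Plugging $\varphi_{\delta}$ into Green's formula, the left-hand side does not depend on $\delta$, the term $\int_{\mathcal O}\varphi_{\delta}\,\div(v)$ is $O(\delta)$, and $\int_{\mathcal O}v\cdot\nabla\varphi_{\delta}$ is bounded by $C\|v\|_{L^{\infty}(\mathcal O)}\|g\|_{L^{1}(\partial\mathcal O)}+o(1)$; letting $\delta\to0$ yields $|\langle (v,n),g\rangle_{\partial\mathcal O}|\le C\|v\|_{L^{\infty}(\mathcal O)}\|g\|_{L^{1}(\partial\mathcal O)}$. Since $C^{1}(\partial\mathcal O)$ is dense in $L^{1}(\partial\mathcal O)$ and $\partial\mathcal O$ has finite surface measure, the map $g\mapsto\langle (v,n),g\rangle_{\partial\mathcal O}$ extends to a bounded linear functional on $L^{1}(\partial\mathcal O)$; by the Riesz representation theorem it is represented by an element of $L^{\infty}(\partial\mathcal O)$, necessarily equal to $(v,n)$. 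Taking $v=\widetilde u$ on $\mathcal O=Q$ gives $(\widetilde u,\widetilde n)\in L^{\infty}(\partial Q)$.

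\medskip
\noindent The main obstacle is the construction of $\varphi_{\delta}$ with these two competing $L^{1}$ bounds, carried out using only the Lipschitz regularity and the cone property; the delicate region is the neighbourhood of the edges $\partial\Omega\times\{0,T\}$ of $\partial Q$, where the distance function is not smooth and its level sets must be handled through the coarea formula, and it is precisely the cone property that forces the collar $\{\mathrm{dist}(\cdot,\partial\mathcal O)<\delta\}$ to have thickness comparable to $\delta$ in every direction and the associated retraction onto $\partial\mathcal O$ to be well behaved. Alternatively, one could regularize $u$ and pass to the limit in Green's formula (using $u_{\varepsilon}\to u$ in $L^{2}(Q)$ and $\div(u_{\varepsilon})\rightharpoonup\div(u)$ in $L^{2}(Q)$, together with a uniform $L^{\infty}(\partial Q)$ bound on the smooth normal traces), or simply invoke the general theory of $L^{\infty}$ normal traces for bounded divergence--measure fields, of which $\widetilde u$ on $Q$ is an instance.
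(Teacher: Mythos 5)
The paper itself contains no proof of Theorem~\ref{trace21}: it is recalled verbatim from \cite{BP}, and the reader is sent there. So the only meaningful comparison is between your sketch and the standard argument that \cite{BP} (and the divergence--measure--field literature it rests on) uses --- and your sketch is exactly that argument. The duality-through-a-boundary-layer scheme is the classical Anzellotti/Chen--Frid construction of $L^{\infty}$ normal traces for bounded fields with $L^{\infty}$ (or measure) divergence: start from the $H(\div)$ trace in $H^{-1/2}(\partial Q)$, test Green's formula against extensions $\varphi_{\delta}$ of $g$ supported in a $\delta$-collar with $\|\varphi_{\delta}\|_{L^{1}}=O(\delta)\|g\|_{L^{1}(\partial Q)}$ and $\|\nabla\varphi_{\delta}\|_{L^{1}}=O(1)\|g\|_{L^{1}(\partial Q)}+O(\delta)$, let $\delta\to0$, and conclude by $(L^{1})^{*}=L^{\infty}$. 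All the individual steps you list are correct, including the reduction of $\widetilde{\div}(\widetilde u)=\div(u)$ and the fact that the cylinder $Q=\Omega\times(0,T)$ is again Lipschitz. What you have is a correct strategy rather than a complete proof: the whole weight rests on the collar construction of $\varphi_{\delta}$ with the two competing $L^{1}$ bounds, which you only describe and honestly flag as the main obstacle (the edge set $\partial\Omega\times\{0,T\}$ and the non-smoothness of the distance function are precisely where the details must be written out, e.g.\ by working in the bi-Lipschitz graph coordinates of each chart rather than with the global distance function). Your closing alternative --- simply invoking the established theory of normal traces of bounded divergence-measure fields --- is legitimate and is, in effect, what the paper does by deferring to \cite{BP}.
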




\begin{theorem}\label{trace}
Let  $u\in L^{\infty}(Q)^{d}\ \text{with}\  \div(u)\in L^{\infty}(Q) $. Then 
there exists  a linear continuous trace operator 

$$\begin{array}{clcl}
\gamma_{\widetilde{n}} : &H(u,Q)   &\longrightarrow  &L^{2}(\partial Q;(\widetilde{u},\widetilde{n})  )\\
& \phi   &\longmapsto   &\phi_{/ \partial Q}\\
\end{array}$$
 which can be localized as

$$\begin{array}{clcl}
\gamma_{\widetilde{n}\pm} : &H(u,Q)   &\longrightarrow  &L^{2}(\partial Q_{\pm};(\widetilde{u},\widetilde{n})  )\\
& \phi   &\longmapsto   &\phi_{/ \partial Q_{\pm}}.
\end{array}$$
\end{theorem}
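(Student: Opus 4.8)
The plan is to build the trace operator by continuous extension from the dense subspace $D(\overline Q)$, which is exactly the subspace whose closure defines $H(u,Q)$. The key analytic input is a Green-type identity: for any $\phi,\psi\in D(\overline Q)$ one has
$$\int_Q \widetilde{\div}(\widetilde u\,\phi)\,\psi\;+\;\int_Q \widetilde u\cdot\widetilde\nabla\psi\;\phi\;=\;\int_{\partial Q}\phi\,\psi\,(\widetilde u,\widetilde n)\,\mathrm{d}s,$$
valid because $\widetilde u$ is (in the density argument) regular and $\widetilde\div(\widetilde u)\in L^\infty$; taking $\psi=\phi$ gives
$$\int_{\partial Q}\phi^2(\widetilde u,\widetilde n)\,\mathrm{d}s \;=\; 2\int_Q \widetilde{\div}(\widetilde u\,\phi)\,\phi \;-\;\int_Q \widetilde{\div}(\widetilde u)\,\phi^2,$$
so that, using Theorem \ref{trace21} to bound $\|\widetilde{\div}(\widetilde u)\|_{L^\infty(Q)}$, one estimates $\int_{\partial Q_-}\phi^2|(\widetilde u,\widetilde n)|\,\mathrm ds$ and $\int_{\partial Q_+}\phi^2|(\widetilde u,\widetilde n)|\,\mathrm ds$ by the left-hand norm. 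Concretely, since
$$\int_{\partial Q_+}\phi^2|(\widetilde u,\widetilde n)| - \int_{\partial Q_-}\phi^2|(\widetilde u,\widetilde n)| = \int_{\partial Q}\phi^2(\widetilde u,\widetilde n) = 2\int_Q \widetilde{\div}(\widetilde u\,\phi)\phi - \int_Q \widetilde{\div}(\widetilde u)\phi^2,$$
while by definition of $\Vert\cdot\Vert$ the quantity $-\int_{\partial Q_-}\phi^2(\widetilde u,\widetilde n)=\int_{\partial Q_-}\phi^2|(\widetilde u,\widetilde n)|$ is already controlled by $\Vert\phi\Vert^2$, one then gets $\int_{\partial Q_+}\phi^2|(\widetilde u,\widetilde n)|$ controlled by $\Vert\phi\Vert^2 + 2\Vert\widetilde{\div}(\widetilde u\phi)\Vert_{L^2}\Vert\phi\Vert_{L^2} + \Vert\widetilde{\div}(\widetilde u)\Vert_{L^\infty}\Vert\phi\Vert_{L^2}^2 \le C\,\Vert\phi\Vert^2$. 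Hence the map $\phi\mapsto\phi_{/\partial Q}$ is bounded from $(D(\overline Q),\Vert\cdot\Vert)$ into $L^2(\partial Q;|(\widetilde u,\widetilde n)|)$.

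Next I would invoke the standard bounded-linear-extension principle: a bounded linear operator from a dense subspace of a normed space into a Banach space (here $L^2(\partial Q;|(\widetilde u,\widetilde n)|)$, complete) extends uniquely to a bounded linear operator on the completion, which is precisely $H(u,Q)$. This defines $\gamma_{\widetilde n}$ with the asserted continuity. For the localized operators $\gamma_{\widetilde n\pm}$, I would simply compose $\gamma_{\widetilde n}$ with restriction to $\partial Q_\pm$ (equivalently multiplication by the characteristic function $\mathbf 1_{\partial Q_\pm}$), which is a norm-one map $L^2(\partial Q;|(\widetilde u,\widetilde n)|)\to L^2(\partial Q_\pm;|(\widetilde u,\widetilde n)|)$; continuity of $\gamma_{\widetilde n\pm}$ follows immediately, and consistency on $D(\overline Q)$ is clear.

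The main obstacle is the density/approximation step underlying the Green identity: one must ensure the integration-by-parts formula and the resulting boundary estimates genuinely survive the passage to $\widetilde u\in L^\infty$ with $\widetilde\div(\widetilde u)\in L^\infty$, rather than only holding for smooth $\widetilde u$. This is handled by first mollifying $u$ (using that $\div u\in L^\infty$ guarantees $\div u_\varepsilon\to\div u$ in $L^2_{loc}$ with uniform $L^\infty$ bounds, cf. the Friedrichs/DiPerna–Lions commutator estimates), writing the identity for $u_\varepsilon$, and passing to the limit; the normal-trace bound of Theorem \ref{trace21} is what keeps the boundary term $\int_{\partial Q}\phi^2(\widetilde u,\widetilde n)$ meaningful in the limit. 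Once that identity is secured, the rest is the routine functional-analytic extension described above, so I would spend the bulk of the write-up on making the limiting argument clean and refer to \cite{BP} for the technical lemmas already established there.
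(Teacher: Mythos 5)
The paper offers no proof of Theorem \ref{trace}; it is quoted from \cite{BP}, and your argument — the Gauss--Green identity $\int_{\partial Q}\phi^2(\widetilde u,\widetilde n)\,\mathrm{d}s=2\int_Q\widetilde{\div}(\widetilde u\phi)\phi-\int_Q\div(u)\phi^2$ on $D(\overline Q)$, the splitting into $\partial Q_\pm$ using that $-\int_{\partial Q_-}\phi^2(\widetilde u,\widetilde n)$ is already part of $\Vert\phi\Vert^2$, and extension by density to $H(u,Q)=\overline{D(\overline Q)}^{\Vert\cdot\Vert}$ — is precisely the argument of that reference and is correct. Your one flagged obstacle (justifying the identity for $u\in L^\infty$ with $\div(u)\in L^\infty$) is handled as you describe, via the $L^\infty$ normal trace of Theorem \ref{trace21} together with either mollification or the distributional Gauss--Green formula for fields in $H(\widetilde{\div},Q)$.
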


Finally, let us  define the spaces 
$$ H_{0}(u,Q,\partial Q_{-})=\big\lbrace \phi \in     H(u,Q),\phi=0 \ \text{on } \ \partial Q_{-} \big\rbrace=H(u,Q)\cap Ker \gamma_{\widetilde{n}_{-}} $$
and 
$$G_{\pm}=\gamma_{\widetilde{n}_{\pm}}(H(u,Q)).   $$

Let us give the curved inequality still called curved Poincar\'e inequality,  below  that  is   fundamental and even is the precursor of existence of STILS solution. It has been introduced and proved  in \cite{AZ1} for free-divergence   and extended in \cite{BP}. \\
There exists $c_p >0$ such that for any  $\phi \in H(u,Q):$
\begin{equation}\label{constp}
\Vert \phi \Vert_{L^{2}(Q)} ^{2} \leq c_{p}^{2} \big ( \Vert\widetilde{\div}(\widetilde{u}\phi)  \Vert_{L^{2}(Q)} ^{2} -\int_{ \partial Q_{-} } \phi^{2}(\widetilde{u},\widetilde{n}) \mathrm{d}s \big ) 
\end{equation}
From  the curved inequality one deduces  the following theorem.

\begin{theorem}\label{poinco}
Let  $u\in L^{\infty}(Q)^{d}\ \text{with}\  \div(u)\in L^{\infty}(Q) $.Then 
the semi norm on $H(u,Q) $ defined by 
$$  \mid \phi \mid_{1,u}^{2} = \Vert \widetilde{\div}(\widetilde{u}\phi) \Vert_{L^{2}(Q)} ^{2} -\int_{ \partial Q_{-} } \phi^{2}(\widetilde{u},\widetilde{n}) \mathrm{d}s  $$
is a square of norm, equivalent to the norm defined  on $H(u,Q). $

\end{theorem}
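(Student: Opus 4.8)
The plan is to deduce Theorem~\ref{poinco} directly from the curved Poincar\'e inequality~\eqref{constp}, which is available to us as a proven fact. The statement claims that the semi-norm $\mid\phi\mid_{1,u}$ is in fact a norm on $H(u,Q)$ and that it is equivalent to the full norm $\Vert\cdot\Vert$ introduced on $H(u,Q)$. Recall that by definition $\Vert\phi\Vert^{2}=\Vert\phi\Vert_{L^{2}(Q)}^{2}+\mid\phi\mid_{1,u}^{2}$, so the whole task reduces to comparing $\mid\phi\mid_{1,u}^{2}$ with $\Vert\phi\Vert_{L^{2}(Q)}^{2}+\mid\phi\mid_{1,u}^{2}$.

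First I would check that $\mid\cdot\mid_{1,u}$ is nonnegative and well defined: the term $-\int_{\partial Q_{-}}\phi^{2}(\widetilde u,\widetilde n)\,\mathrm ds$ is nonnegative because on $\partial Q_{-}$ one has $(\widetilde u,\widetilde n)<0$, and the trace $\phi_{/\partial Q_{-}}$ makes sense in $L^{2}(\partial Q_{-};\mid(\widetilde u,\widetilde n)\mid)$ by Theorem~\ref{trace}. Hence $\mid\phi\mid_{1,u}^{2}\ge 0$. Next, to see it is definite (the key point making it a genuine norm and not merely a semi-norm), suppose $\mid\phi\mid_{1,u}=0$; then the right-hand side of~\eqref{constp} vanishes, so $\Vert\phi\Vert_{L^{2}(Q)}^{2}\le 0$, forcing $\phi=0$ in $L^{2}(Q)$, hence $\phi=0$ in $H(u,Q)$. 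Homogeneity and the triangle inequality for $\mid\cdot\mid_{1,u}$ follow since it is built from $L^{2}$-type seminorms (the $L^{2}(Q)$ norm of $\widetilde{\div}(\widetilde u\phi)$ and the $L^{2}(\partial Q_{-};\mid(\widetilde u,\widetilde n)\mid)$ norm of the trace), so $\mid\phi\mid_{1,u}^{2}=\Vert\widetilde{\div}(\widetilde u\phi)\Vert_{L^{2}(Q)}^{2}+\Vert\phi\Vert_{L^{2}(\partial Q_{-};\mid(\widetilde u,\widetilde n)\mid)}^{2}$ is the square of the $\ell^{2}$-combination of two norms, which is a norm.

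For the equivalence, one inequality is immediate: since $\Vert\phi\Vert_{L^{2}(Q)}^{2}\ge 0$, we get $\mid\phi\mid_{1,u}^{2}\le\Vert\phi\Vert_{L^{2}(Q)}^{2}+\mid\phi\mid_{1,u}^{2}=\Vert\phi\Vert^{2}$. For the reverse, I would invoke~\eqref{constp} to bound $\Vert\phi\Vert_{L^{2}(Q)}^{2}\le c_{p}^{2}\mid\phi\mid_{1,u}^{2}$, and therefore $\Vert\phi\Vert^{2}=\Vert\phi\Vert_{L^{2}(Q)}^{2}+\mid\phi\mid_{1,u}^{2}\le (1+c_{p}^{2})\mid\phi\mid_{1,u}^{2}$. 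Combining the two gives $\mid\phi\mid_{1,u}^{2}\le\Vert\phi\Vert^{2}\le(1+c_{p}^{2})\mid\phi\mid_{1,u}^{2}$, i.e. the two norms are equivalent, with explicit constants $1$ and $1+c_{p}^{2}$.

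The argument is essentially formal once~\eqref{constp} is granted, so there is no serious obstacle; the only point requiring a little care is the density/closure issue — the curved inequality is stated for all $\phi\in H(u,Q)$, but if one only knows it on the dense subspace $D(\overline Q)$ one must pass to the limit in $H(u,Q)$, using continuity of the trace operator $\gamma_{\widetilde n_{-}}$ from Theorem~\ref{trace} to ensure the boundary term behaves well under the limit. Since the excerpt already states~\eqref{constp} on all of $H(u,Q)$, I would simply cite it. I would then remark that this equivalence is what legitimizes working with $\mid\cdot\mid_{1,u}$ as the working norm on $H(u,Q)$ in the subsequent variational formulation.
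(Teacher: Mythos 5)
Your proof is correct and follows exactly the route the paper intends: the paper gives no separate argument beyond the remark that the theorem is deduced from the curved Poincar\'e inequality \eqref{constp}, and your deduction (definiteness from \eqref{constp}, plus the two-sided bound $\mid\phi\mid_{1,u}^{2}\le\Vert\phi\Vert^{2}\le(1+c_{p}^{2})\mid\phi\mid_{1,u}^{2}$ using $\Vert\phi\Vert^{2}=\Vert\phi\Vert_{L^{2}(Q)}^{2}+\mid\phi\mid_{1,u}^{2}$) is precisely that deduction, with explicit constants.
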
 

Thus $ H(u,Q)  $ can be  equipped by the norm  $ \mid . \mid_{1, u}. $
\begin{remark}\label{rq1}
In the free-divergence case, one gets  that $ c_{p}\leq 2T $ (see for  instance  \cite{AZ1} for additional information).
\end{remark}

\subsection{Space time least square and  the linear problem}
In this section, we are going to recall  the design and some proprieties of STILS  method for solving the following  linear conservation laws  problem.

\begin{equation}\label{tr}
\left \{ \begin{array}{l} 
 \frac{\partial c}{\partial t}+\div(uc)=f  \text\ {in}\ Q \\
c(x,0)= c_{0}(x) \mbox{ in } \Omega\\
c(x,t)=c_{1}(x,t) \ \mbox{on} \ \Gamma_{-}.\end{array}
\right .
\end{equation}

The space time least square solution of (\ref{tr}) corresponds to a minimizer in

$$\big\lbrace \phi \in H(u,Q); \gamma_{\widetilde{n}_{-}}(\phi)=c_{b}   \big \rbrace  $$
of the following convex, $H(u,Q)$-coercive functional  defined by 

\begin{equation}\label{eq:fonc}
J(c)=\dfrac{1}{2} \big ( \int_{Q}(\widetilde{\div}(\widetilde{u}c)-f)^{2}\mathrm{d}x\mathrm{d}t-\int_{ \partial Q_{-} } c^{2}(\widetilde{u},\widetilde{n}) \mathrm{d}s       \big)
\end{equation}

The Gâteaux-differential of J yields

\begin{equation}\label{eq:diff}
D[J(c)].\phi= \int_{Q}(\widetilde{\div}(\widetilde{u}c)-f)\widetilde{\div}(\widetilde{u}\phi)\mathrm{d}x\mathrm{d}t  -\int_{ \partial Q_{-} } c\phi(\widetilde{u},\widetilde{n}) \mathrm{d}s.  
\end{equation}

Thus, if $c_{b} \in G_{-} $, the space time  last square formulation of (\ref{tr}) is   expressed as follows

\begin{equation}\label{eq:stils}
\int_{Q}\widetilde{\div}(\widetilde{u}c)\widetilde{\div}(\widetilde{u}\phi)\mathrm{d}x\mathrm{d}t  =\int_{Q}f\widetilde{\div}(\widetilde{u}\phi)\mathrm{d}x\mathrm{d}t  \,\, \forall \,\,  \phi \in H_{0}(u,Q)  
\end{equation}

and 
$$ \gamma_{\widetilde{n}_{-}}(c)=c_{b}. $$ 
For more details  see \cite{BP}, \cite{KB}. 

Thanks to  Theorem \ref{trace} we can reduce the problem (\ref{eq:stils}) in a  homogeneous one in $\partial Q_{-}.$ For $c_{b} \in G_{-} $, let $C_{b} \in H(u,Q)$ such that  $\gamma_{\widetilde{n}_{-}}(C_{b}) =c_{b}$ then $ \rho=c-C_{b} $ is the unique solution of 
 
\begin{equation}\label{eq:stilsd}
\int_{Q}\widetilde{\div}(\widetilde{u}\rho)\widetilde{\div}(\widetilde{u}\phi)\mathrm{d}x\mathrm{d}t  =\int_{Q}(f-\widetilde{\div}(\widetilde{u}C_{b} ))\widetilde{\div}(\widetilde{u}\phi)\mathrm{d}x\mathrm{d}t  \,\, \forall \,\,  \phi \in H_{0}(u,Q).  
\end{equation}
Finally let us recall the following theorem proved in \cite{BP}.

\begin{theorem}\label{existence}
For  $u\in L^{\infty}(Q)^{d}\ \text{with}\  \div(u)\in L^{\infty}(Q) $, $ c_{b} \in G_{-}$, and $f\in L^{2}(Q)$, the problem (\ref{eq:stilsd})  has a unique solution. Moreover 
$$\mid \rho \mid_{1,u} \leq  \Vert f \Vert_{L^{2}(Q)}  + \Vert \widetilde{\div}(\widetilde{u}C_{b} ) \Vert_{L^{2}(Q)}  $$ 
and the function $c=\rho+C_{b}$ is the espace-time least squares solution of (\ref{tr}).
\end{theorem}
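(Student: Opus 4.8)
The plan is to apply the Lax–Milgram theorem to the bilinear form appearing on the left-hand side of \eqref{eq:stilsd}, working in the Hilbert space $H_0(u,Q)$ equipped with the norm $\mid\cdot\mid_{1,u}$, which by Theorem~\ref{poinco} is indeed a norm equivalent to $\Vert\cdot\Vert$. First I would set
$$a(\rho,\phi)=\int_Q \widetilde{\div}(\widetilde{u}\rho)\,\widetilde{\div}(\widetilde{u}\phi)\,\mathrm{d}x\,\mathrm{d}t,\qquad \ell(\phi)=\int_Q\big(f-\widetilde{\div}(\widetilde{u}C_b)\big)\,\widetilde{\div}(\widetilde{u}\phi)\,\mathrm{d}x\,\mathrm{d}t,$$
and check the hypotheses. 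Bilinearity of $a$ and linearity of $\ell$ are immediate. Continuity of $a$ follows from Cauchy--Schwarz in $L^2(Q)$: $\mid a(\rho,\phi)\mid\le \Vert\widetilde{\div}(\widetilde{u}\rho)\Vert_{L^2(Q)}\Vert\widetilde{\div}(\widetilde{u}\phi)\Vert_{L^2(Q)}\le \mid\rho\mid_{1,u}\mid\phi\mid_{1,u}$, since on $H_0(u,Q)$ the boundary term in $\mid\cdot\mid_{1,u}$ vanishes on $\partial Q_-$ and is nonnegative on $\partial Q_+$, so $\Vert\widetilde{\div}(\widetilde{u}\phi)\Vert_{L^2(Q)}\le\mid\phi\mid_{1,u}$. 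Coercivity is the crucial point: for $\phi\in H_0(u,Q)$ the trace on $\partial Q_-$ is zero, hence $\mid\phi\mid_{1,u}^2=\Vert\widetilde{\div}(\widetilde{u}\phi)\Vert_{L^2(Q)}^2=a(\phi,\phi)$, so $a$ is coercive with constant $1$ — here is exactly where the homogenization via $C_b$ and Theorem~\ref{trace} pays off. Continuity of $\ell$ follows again from Cauchy--Schwarz together with the triangle inequality: $\mid\ell(\phi)\mid\le\big(\Vert f\Vert_{L^2(Q)}+\Vert\widetilde{\div}(\widetilde{u}C_b)\Vert_{L^2(Q)}\big)\mid\phi\mid_{1,u}$, using $f\in L^2(Q)$ and $C_b\in H(u,Q)$.

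With these four properties verified, the Lax–Milgram theorem delivers a unique $\rho\in H_0(u,Q)$ with $a(\rho,\phi)=\ell(\phi)$ for all $\phi\in H_0(u,Q)$, which is precisely \eqref{eq:stilsd}. For the a priori estimate I would test the equation with $\phi=\rho$: then $\mid\rho\mid_{1,u}^2=a(\rho,\rho)=\ell(\rho)\le\big(\Vert f\Vert_{L^2(Q)}+\Vert\widetilde{\div}(\widetilde{u}C_b)\Vert_{L^2(Q)}\big)\mid\rho\mid_{1,u}$, and dividing by $\mid\rho\mid_{1,u}$ (the case $\rho=0$ being trivial) gives the claimed bound. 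Finally, to identify $c=\rho+C_b$ as the space-time least squares solution of \eqref{tr}, I would note that $\gamma_{\widetilde{n}_-}(c)=\gamma_{\widetilde{n}_-}(\rho)+\gamma_{\widetilde{n}_-}(C_b)=0+c_b=c_b$, so $c$ lies in the admissible set, and that \eqref{eq:stilsd} is equivalent to \eqref{eq:stils} after substituting $\rho=c-C_b$ and expanding; since \eqref{eq:stils} together with $D[J(c)]\cdot\phi=0$ for all $\phi\in H_0(u,Q)$ characterizes the minimizer of the convex functional $J$ in \eqref{eq:fonc} over that affine set, $c$ is the STILS solution.

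The only genuinely delicate step is the coercivity/well-definedness on the correct space: one must be sure that $\mid\cdot\mid_{1,u}$ really is a norm on $H_0(u,Q)$ (guaranteed by the curved Poincaré inequality \eqref{constp} and Theorem~\ref{poinco}) and that the boundary integral over $\partial Q_-$ drops out for $\phi$ in the kernel of $\gamma_{\widetilde{n}_-}$, which is what makes $a(\phi,\phi)=\mid\phi\mid_{1,u}^2$ exactly rather than merely up to a sign-indefinite boundary term. Everything else is a routine application of Cauchy–Schwarz and Lax–Milgram; no fixed-point argument is needed here since the problem is linear. One should also remark that $H_0(u,Q)$ is closed in $H(u,Q)$ — it is the kernel of the continuous operator $\gamma_{\widetilde{n}_-}$ from Theorem~\ref{trace} — hence itself a Hilbert space, which is what legitimizes invoking Lax–Milgram.
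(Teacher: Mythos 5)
Your proof is correct and is essentially the argument the paper relies on: the paper does not reprove Theorem~\ref{existence} but recalls it from \cite{BP}, where existence and uniqueness follow exactly as you describe, from Lax--Milgram (or Riesz, since $a$ is symmetric) on $H_{0}(u,Q,\partial Q_{-})$ equipped with $\mid\cdot\mid_{1,u}$, using that the trace vanishes on $\partial Q_{-}$ so that $a(\phi,\phi)=\mid\phi\mid_{1,u}^{2}$, together with the curved Poincar\'e inequality and the trace theorem to guarantee completeness and the equivalence of norms. The only cosmetic slip is your reference to a boundary term ``on $\partial Q_{+}$'': the seminorm $\mid\cdot\mid_{1,u}$ only involves $-\int_{\partial Q_{-}}\phi^{2}(\widetilde{u},\widetilde{n})\,\mathrm{d}s$, which is nonnegative because $(\widetilde{u},\widetilde{n})<0$ there, and this is all that is needed for $\Vert\widetilde{\div}(\widetilde{u}\phi)\Vert_{L^{2}(Q)}\leq\mid\phi\mid_{1,u}$.
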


\subsection{Space time least square and semi linear problem }
This  last subsection is devoted to  introduce a variational formulation (\ref{SLCL}).  Otherwise 
our aim is to find $ c \in H(u,Q) $ such that 

\begin{equation}\label{eq:stilsn}
\int_{Q}\widetilde{\div}(\widetilde{u}c)\widetilde{\div}(\widetilde{u}\phi)\mathrm{d}x\mathrm{d}t  =
\int_{Q}f(c)\widetilde{\div}(\widetilde{u}\phi)\mathrm{d}x\mathrm{d}t  \,\, \forall \,\,  \phi \in H_{0}(u,Q,\partial Q_{-} )  
\end{equation}
 and

\begin{equation}\label{CD}
\gamma_{\widetilde{n}_{-}}(c) =c_{b}
\end{equation} 
It is important to stress that the above formulation is nonlinear. And we shall propose fixed point methods to study it. Let us recall that there are at least three distinct classes of such abstract theorems that are useful for proving existence results in  a wide  family  of partial differential equations.  These classes are
\begin{itemize}
\item fixed point theorems for strict contractions,
\item fixed point theorems for compact mappings and
\item fixed point theorems for order preserving operators.
\end{itemize}
We shall use in the following  the two first types.

\section{Existence and   qualitative results}
\subsection{Existence and uniqueness}
In this section we shall study the problem  (\ref{eq:stilsn}) by establishing and proving  existence and uniqueness theorems for the STILS solution. Theses results are deduced  thanks to  the fixed point theory namely the Banach-Picard and Schauder theorems.\\

At first,  in the case where $f$ is $k-$ Lipschitz with $ k $ is  small enough that will be precised and  by using the Banach-Picard fixed point theorem (\cite{Evans} ), we have the following existence and uniqueness  theorem  of STILS solution. 

\begin{theorem}
Let  $ u\in L^{\infty}(Q) $ with $ \div(u)\in L^{\infty}(Q) $ , and $ c_{b} \in G_{-} $ , $f$ be $k-$ Lipschitz in $ \mathbb{R} $ with $ k<\frac{1}{c_{p}} $. Then the problem (\ref{eq:stilsn})-(\ref{CD}) has a unique solution.  
\end{theorem}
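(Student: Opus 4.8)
The natural approach is to set up a fixed point map on $H(u,Q)$ and apply the Banach-Picard contraction theorem. Given $v \in H(u,Q)$, consider the linear problem
\begin{equation*}
\int_{Q}\widetilde{\div}(\widetilde{u}c)\widetilde{\div}(\widetilde{u}\phi)\,\mathrm{d}x\mathrm{d}t = \int_{Q}f(v)\widetilde{\div}(\widetilde{u}\phi)\,\mathrm{d}x\mathrm{d}t \quad \forall\, \phi \in H_{0}(u,Q,\partial Q_{-}),
\end{equation*}
together with $\gamma_{\widetilde{n}_{-}}(c) = c_b$. Since $f$ is $k$-Lipschitz and $v \in L^2(Q)$, we have $f(v) \in L^2(Q)$ (using that $|f(v)| \le |f(0)| + k|v|$ and $|Q| < \infty$), so Theorem~\ref{existence} applies and produces a unique $c =: \Phi(v) \in H(u,Q)$. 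The plan is to show $\Phi$ is a strict contraction for $\|.\|_{1,u}$ when $k < 1/c_p$, so its unique fixed point solves (\ref{eq:stilsn})-(\ref{CD}).

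First I would establish the contraction estimate. Take $v_1, v_2 \in H(u,Q)$ and set $c_i = \Phi(v_i)$. Subtracting the two variational identities, the boundary terms cancel (both $c_i$ have trace $c_b$, so $c_1 - c_2 \in H_0(u,Q,\partial Q_-)$ is an admissible test function), giving
\begin{equation*}
\int_{Q}\big(\widetilde{\div}(\widetilde{u}(c_1-c_2))\big)^2\,\mathrm{d}x\mathrm{d}t = \int_{Q}\big(f(v_1)-f(v_2)\big)\widetilde{\div}(\widetilde{u}(c_1-c_2))\,\mathrm{d}x\mathrm{d}t.
\end{equation*}
Because $c_1 - c_2$ vanishes on $\partial Q_-$, the boundary integral in $|\cdot|_{1,u}^2$ drops, so the left side is exactly $|c_1 - c_2|_{1,u}^2$. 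Applying Cauchy-Schwarz on the right, then the Lipschitz bound $\|f(v_1)-f(v_2)\|_{L^2(Q)} \le k\|v_1-v_2\|_{L^2(Q)}$, and finally the curved Poincaré inequality (\ref{constp}) in the form $\|v_1-v_2\|_{L^2(Q)} \le c_p\,|v_1-v_2|_{1,u}$, yields
\begin{equation*}
|c_1-c_2|_{1,u}^2 \le k\,\|v_1-v_2\|_{L^2(Q)}\,|c_1-c_2|_{1,u} \le k\,c_p\,|v_1-v_2|_{1,u}\,|c_1-c_2|_{1,u},
\end{equation*}
hence $|\Phi(v_1)-\Phi(v_2)|_{1,u} \le k\,c_p\,|v_1-v_2|_{1,u}$ with $k c_p < 1$.

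To conclude, I would invoke Theorem~\ref{poinco}: $|\cdot|_{1,u}$ is a norm on $H(u,Q)$ equivalent to $\|\cdot\|$, and $(H(u,Q), |\cdot|_{1,u})$ is a Banach space. Thus $\Phi$ is a strict contraction on a complete metric space, so by the Banach-Picard fixed point theorem it has a unique fixed point $c \in H(u,Q)$, which by construction satisfies (\ref{eq:stilsn}) and (\ref{CD}); uniqueness of the solution follows from uniqueness of the fixed point. The only real subtlety is the well-definedness of $\Phi$ — checking $f(v) \in L^2(Q)$ so that Theorem~\ref{existence} genuinely applies for every $v$, and noting that $C_b$ is fixed once and for all (it does not depend on $v$), so the boundary data enters only through the affine shift and cancels in the difference estimate. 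The contraction constant computation itself is routine given the curved Poincaré inequality.
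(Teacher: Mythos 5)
Your proposal is correct and follows essentially the same route as the paper: define the solution map via the linear theorem, derive the identity $|c_1-c_2|_{1,u}^2=\int_Q(f(v_1)-f(v_2))\,\widetilde{\div}(\widetilde{u}(c_1-c_2))\,\mathrm{d}x\mathrm{d}t$, and combine Cauchy--Schwarz, the Lipschitz bound, and the curved Poincar\'e inequality to get the contraction constant $kc_p<1$ before invoking Banach--Picard. The only cosmetic difference is that the paper works explicitly on the closed affine subspace $\mathbb{H}=\{\phi\in H(u,Q):\gamma_{\widetilde{n}_-}(\phi)=c_b\}$, which is what your remark about the fixed boundary lift $C_b$ amounts to.
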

\begin{proof}

Let us consider $$\mathbb{H}=\big\lbrace \phi \in   H(u,Q), \gamma_{\widetilde{n}_{-}}(\phi) =c_{b}\big\rbrace   $$
For all $\rho \in \mathbb{H}  $ ,$ f(\rho) \in L^{2}(Q) $ then,  by \ref{existence},   there exists a unique element $ c \in \mathbb{H} $ satisfying :

\begin{equation}\label{eq:stilln}
\int_{Q}\widetilde{\div}(\widetilde{u}c)\widetilde{\div}(\widetilde{u}\phi)\mathrm{d}x\mathrm{d}t  =
\int_{Q}f(\rho)\widetilde{\div}(\widetilde{u}\phi)\mathrm{d}x\mathrm{d}t  
\end{equation}

for all $ \phi \in H_{0}(u,Q,\partial Q_{-}) $ 

Let us  define, 
\begin{equation}\label{operator1}
T:\mathbb{H}\rightarrow \mathbb{H} 
\end{equation}
 such that 
\begin{equation}\label{operator}
T(\rho)=c  
\end{equation}
 thus a solution of the non linear problem (\ref{eq:stilsn}-\ref{CD}) is a fixed point of T.\\
Let $ \rho_{1} $ , $ \rho_{2} $  $\in$ $ \mathbb{H} $ and $c_{1}= T(\rho_{1}) $, $c_{2}=(T \rho_{2}).$\\
Since $ c_{1}-c_{2}=0 $ on $\partial Q_{-} $

$$  \mid c_{1}-c_{2} \mid ^{2}_{1,u}= \Vert \widetilde{\div}(\widetilde{u}( c_{1}-c_{2}))
\Vert_{L^{2}(Q)}^{2} = \int_{Q}\widetilde{\div}(\widetilde{u}( c_{1}-c_{2}))
 \widetilde{div}(\widetilde{u}( c_{1}))    \mathrm{d}x\mathrm{d}t  -\int_{Q}\widetilde{div}(\widetilde{u}( c_{1}-c_{2}))
 \widetilde{\div}(\widetilde{u}( c_{2}))    \mathrm{d}x\mathrm{d}t.
   $$

 For $c_{1}= T(\rho_{1}) $ and $c_{2}=T (\rho_{2}) $ we have 

$$\int_{Q}\widetilde{\div}(\widetilde{u}c_{1})\widetilde{\div}(\widetilde{u}( c_{1}-c_{2}))\mathrm{d}x\mathrm{d}t  =\int_{Q}f(\rho_{1})\widetilde{\div}(\widetilde{u}( c_{1}-c_{2}))\mathrm{d}x\mathrm{d}t$$
 and 
$$\int_{Q}\widetilde{\div}(\widetilde{u}c_{2})\widetilde{\div}(\widetilde{u}( c_{1}-c_{2}))\mathrm{d}x\mathrm{d}t  =\int_{Q}f(\rho_{2})\widetilde{\div}(\widetilde{u}( c_{1}-c_{2}))\mathrm{d}x\mathrm{d}t.$$ 
Then a computation yields  

$$  \mid c_{1}-c_{2} \mid ^{2}_{1,u}= \int_{Q}f(\rho_{1})
 \widetilde{\div}(\widetilde{u}( c_{1}-c_{2}))    \mathrm{d}x\mathrm{d}t  -\int_{Q}\
 f(\rho_{2})\widetilde{\div}(\widetilde{u}( c_{1}-c_{2}))    \mathrm{d}x\mathrm{d}t.
   $$

$$  \mid c_{1}-c_{2} \mid ^{2}_{1,u}= \int_{Q}(f(\rho_{1})-f(\rho_{2}))
 \widetilde{\div}(\widetilde{u}( c_{1}-c_{2}))    \mathrm{d}x\mathrm{d}t $$

By  Young's inequality, we get:
$$\mid c_{1}-c_{2} \mid ^{2}_{1,u} \leq  \Vert f(\rho_{1})-f(\rho_{2})\Vert_{ {L^{2}(Q)}} 
\Vert \widetilde{\div}(\widetilde{u}( c_{1}-c_{2})) \Vert_{ {L^{2}(Q)}}
   $$   
  
Since $f$ is $k-$ Lipschitz in $ \mathbb{R} $ and $ \mid c_{1}-c_{2} \mid_{1,u} =\Vert \widetilde{div}(\widetilde{u}( c_{1}-c_{2})) \Vert_{ {L^{2}(Q)}}  $
we have:
$$\mid c_{1}-c_{2} \mid _{1,u}^{2} \leq k \Vert \rho_{1}-\rho_{2}\Vert_{ {L^{2}(Q)}}^{2} \mid c_{1}-c_{2} \mid_{1,u} $$   

and hence  

$$\mid c_{1}-c_{2} \mid^{2}_{1,u} \leq  kc_{p} \mid \rho_{1}-\rho_{2} \mid_{1,u}   \mid c_{1}-c_{2} \mid_{1,u}. $$   
Finally we get 
$$\mid T(\rho_{1})-T(\rho_{2})  \mid_{1,u} \leq k c_{p} \mid \rho_{1}-\rho_{2} \mid_{1,u}.  $$   
Thus T is a strict contraction, provided  that  $k c_{p}<1.$ The 
Banach's fixed point theorem   ensures the existence and uniqueness  of $ c \in \mathbb{H} $ with $T(c)=c$ which solves  (\ref{eq:stilsn})-(\ref{CD})
\end{proof}

\begin{remark}
In the free-divergence case, the previous assumption gives $ k < \frac{1}{2T} $  thus we get a solution for small times. But it  can not be extended because of the lost of  continuity.\\
The constant $ c_{p} $  is not optimal (see \cite{BP} for more details). And  so, the condition $k c_{p}<1 $ could  be improved.
\end{remark}
Now, let us state and prove the following technical  lemmas  that will be key steps in the building of the  next existence theorem.
\begin{lemma}\label{lem1}
There is a  positive constant  $C>0$ such that for any   $  \phi \in D(\overline{Q})  \,\,\text{verifying  } \,\, \phi=0  \,\,\text{on  } \partial Q_{-} $, we have  $    \Vert  \widetilde{\nabla} \phi  \Vert_{L^{2}(Q)^{d+1}} \leq C \Vert  \widetilde{\div} (\widetilde{u} \phi) \Vert_{L^{2}(Q)}   $
\end{lemma}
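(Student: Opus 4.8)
The idea is to separate, inside $\widetilde{\div}(\widetilde{u}\phi)$, the genuine derivative of $\phi$ in the space--time direction $\widetilde{u}$ from a zeroth order term, and then to eliminate that zeroth order term with the curved Poincaré inequality (\ref{constp}). Since $\phi\in D(\overline{Q})$ and $\div(u)\in L^{\infty}(Q)$, the Leibniz rule holds in $L^{2}(Q)$ and, recalling that the first component of $\widetilde{u}$ is the constant $1$,
$$\widetilde{\div}(\widetilde{u}\phi)=\frac{\partial\phi}{\partial t}+\sum_{i=1}^{d}u_{i}\frac{\partial\phi}{\partial x_{i}}+\phi\,\div(u)=(\widetilde{u},\widetilde{\nabla}\phi)+\phi\,\div(u),$$
so that $(\widetilde{u},\widetilde{\nabla}\phi)=\widetilde{\div}(\widetilde{u}\phi)-\phi\,\div(u)$.

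Because $\phi=0$ on $\partial Q_{-}$, the boundary integral in (\ref{constp}) vanishes and hence $\Vert\phi\Vert_{L^{2}(Q)}\leq c_{p}\Vert\widetilde{\div}(\widetilde{u}\phi)\Vert_{L^{2}(Q)}$. Combining this with the identity above and with $\div(u)\in L^{\infty}(Q)$ yields
$$\Vert(\widetilde{u},\widetilde{\nabla}\phi)\Vert_{L^{2}(Q)}\leq\Vert\widetilde{\div}(\widetilde{u}\phi)\Vert_{L^{2}(Q)}+\Vert\div(u)\Vert_{L^{\infty}(Q)}\Vert\phi\Vert_{L^{2}(Q)}\leq\bigl(1+c_{p}\Vert\div(u)\Vert_{L^{\infty}(Q)}\bigr)\Vert\widetilde{\div}(\widetilde{u}\phi)\Vert_{L^{2}(Q)}.$$
Thus the lemma is reduced to the purely kinematic inequality $\Vert\widetilde{\nabla}\phi\Vert_{L^{2}(Q)^{d+1}}\leq C_{0}\Vert(\widetilde{u},\widetilde{\nabla}\phi)\Vert_{L^{2}(Q)}$ for $\phi\in D(\overline{Q})$ vanishing on $\partial Q_{-}$, that is, to controlling every partial derivative of such a $\phi$ by its single derivative along $\widetilde{u}$; one would then take $C=C_{0}\bigl(1+c_{p}\Vert\div(u)\Vert_{L^{\infty}(Q)}\bigr)$.

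This last reduction is the step I expect to be the real obstacle: a single oblique (transport) derivative does not by itself control the derivatives of $\phi$ transverse to $\widetilde{u}$, so one must genuinely exploit the regularity of the field together with the geometry of $Q$ and the inflow condition. The route I would attempt is to approximate $u$ in $L^{\infty}(Q)$ by smooth, divergence-bounded fields $u_{\varepsilon}$, so that $\widetilde{u}_{\varepsilon}$ is a $C^{1}$ vector field on $\overline{Q}$ transverse to $\partial Q_{-}$; then, using the Lipschitz character of $\partial\Omega$ and the cone property, straighten the integral curves of $\widetilde{u}_{\varepsilon}$ by a bi-Lipschitz change of variables under which $(\widetilde{u}_{\varepsilon},\widetilde{\nabla}\phi)$ becomes one coordinate derivative, recover the transverse derivatives by one-dimensional integration along the straightened curves issuing from $\{\phi=0\}$, and finally pass to the limit $\varepsilon\to0$ with constants depending only on $\Vert u\Vert_{L^{\infty}(Q)}$, $\Vert\div(u)\Vert_{L^{\infty}(Q)}$, $T$ and $\Omega$ (through $c_{p}$). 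The delicate points are the uniformity of this change of variables as $\varepsilon\to0$ and the fact that inverting it must not cost a derivative; this is the part of the argument I would scrutinize most carefully, and should it fail for a merely $L^{\infty}$ field one would be forced either to restrict the admissible class of $u$ or to content oneself with the bound on $(\widetilde{u},\widetilde{\nabla}\phi)$ obtained above.
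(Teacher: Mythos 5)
Your argument is not a proof of the lemma: everything up to the estimate $\Vert(\widetilde{u},\widetilde{\nabla}\phi)\Vert_{L^{2}(Q)}\leq\bigl(1+c_{p}\Vert\div(u)\Vert_{L^{\infty}(Q)}\bigr)\Vert\widetilde{\div}(\widetilde{u}\phi)\Vert_{L^{2}(Q)}$ is correct, but the decisive step --- controlling the full gradient by the single oblique derivative $(\widetilde{u},\widetilde{\nabla}\phi)$ --- is exactly what the lemma asserts and you leave it open. Your suspicion about that step is, however, well founded: it is false, and so is the lemma as stated. Take $d=1$, $\Omega=(0,1)$, $u\equiv 1$, and $\phi_{N}(x,t)=\chi(x,t)\sin\bigl(N(x-t)\bigr)$ with $\chi\in D(\overline{Q})$ a fixed cutoff vanishing near $\partial Q_{-}=\Omega\times\{0\}\cup\{0\}\times(0,T)$. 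Since $(\partial_{t}+\partial_{x})\sin\bigl(N(x-t)\bigr)=0$, one has $\widetilde{\div}(\widetilde{u}\phi_{N})=\bigl((\partial_{t}+\partial_{x})\chi\bigr)\sin\bigl(N(x-t)\bigr)$, which is bounded in $L^{2}(Q)$ uniformly in $N$, whereas $\Vert\widetilde{\nabla}\phi_{N}\Vert_{L^{2}(Q)^{2}}$ grows like $N$: the oscillation is transverse to the characteristic direction $\widetilde{u}=(1,1)$ and is invisible to $\widetilde{\div}(\widetilde{u}\,\cdot)$. (The degenerate case $u\equiv0$, $\phi=t\psi(x)$ with $\psi$ oscillatory, gives the same conclusion even more quickly.) So no straightening of characteristics can recover the transverse derivatives; this is precisely the obstruction you anticipated, and it cannot be removed for any fixed bounded field, let alone a merely $L^{\infty}$ one.

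For comparison, the paper does not follow your constructive route at all: it argues by contradiction and compactness. It normalizes $\theta_{n}=\phi_{n}/\Vert\widetilde{\nabla}\phi_{n}\Vert$ so that $\Vert\widetilde{\nabla}\theta_{n}\Vert_{L^{2}(Q)^{d+1}}=1$ and $\Vert\widetilde{\div}(\widetilde{u}\theta_{n})\Vert_{L^{2}(Q)}<1/n$, deduces from the curved Poincar\'e inequality that $\theta_{n}\to0$ in $L^{2}(Q)$, hence $\theta_{n}\rightharpoonup0$ weakly in $H^{1}(Q)$, and then claims a contradiction by asserting $\sup_{\Vert\psi\Vert=1}\mid(\psi,\widetilde{\nabla}\theta_{n})\mid<1$ for large $n$. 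That step is invalid: the rank $n_{0}$ beyond which $\mid(\psi,\widetilde{\nabla}\theta_{n})\mid<1$ holds depends on $\psi$, so the supremum cannot be taken, and weak convergence of $\widetilde{\nabla}\theta_{n}$ to $0$ is perfectly compatible with $\Vert\widetilde{\nabla}\theta_{n}\Vert\equiv1$ (any orthonormal sequence behaves this way; the normalized sequence built from the counterexample above realizes it concretely). In short, the gap you flagged is not a gap you failed to close but a genuine falsity, and the paper's own proof breaks down at the same point. The usable content of your computation is the intermediate bound on $\Vert(\widetilde{u},\widetilde{\nabla}\phi)\Vert_{L^{2}(Q)}$, which is the most one can extract from $\Vert\widetilde{\div}(\widetilde{u}\phi)\Vert_{L^{2}(Q)}$ without further structural assumptions; the subsequent results that rely on Lemma \ref{lem1} (in particular Lemma \ref{lem2} and Lemma \ref{cle}) would need to be re-examined accordingly.
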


\begin{proof}
Let us suppose that  the inequality is false. Then  for any integer  $n \in \mathbb{N}$, there is  $\phi_{n}\in D(\overline{Q}) $ tuch that :
\begin{equation}\label{eq1}
 \Vert  \widetilde{\nabla} \phi_{n}  \Vert_{L^{2}(Q)^{d+1}} > n \Vert  \widetilde{\div} (\widetilde{u} \phi_{n}) \Vert_{L^{2}(Q)}
\end{equation}
If  $n$ is such that   $\Vert  \widetilde{\nabla} \phi_{n}  \Vert_{L^{2}(Q)^{d+1}} =0 $ then
$ \Vert  \widetilde{\nabla} \phi_{n}  \Vert_{L^{2}(Q)^{d+1}} = n \Vert  \widetilde{\div} (\widetilde{u} \phi_{n}) \Vert_{L^{2}(Q)} =0$ which is a contradiction with  (\ref{eq1}).\\
Now dividing (\ref{eq1}) by $\Vert  \widetilde{\nabla} \phi_{n}  \Vert_{L^{2}(Q)^{d+1}} $, we have :\\
\begin{equation}\label{eq2}
 \Vert  \widetilde{\nabla}   \frac{\phi_{n}}{ \Vert  \widetilde{\nabla} \phi_{n}  \Vert}  \Vert_{L^{2}(Q)^{d+1}} > n \Vert  \widetilde{\div} (\widetilde{u}\frac{\phi_{n}}{ \Vert  \widetilde{\nabla} \phi_{n}  \Vert}) \Vert_{L^{2}(Q)}
\end{equation}

Setting  $\theta_{n}= \frac{\phi_{n}}{ \Vert  \widetilde{\nabla} \phi_{n}  \Vert_{L^{2}(Q)^{d+1}}} $, we obtain :\\
\begin{equation}\label{eq3}
\Vert  \widetilde{\nabla} \theta_{n}  \Vert_{L^{2}(Q)^{d+1}}=1
\end{equation}
and
\begin{equation}\label{eq4}
 \Vert  \widetilde{\div} (\widetilde{u} \theta_{n}) \Vert_{L^{2}(Q)} =  \Vert  \widetilde{\div} (\widetilde{u}\frac{\phi_{n}}{ \Vert  \widetilde{\nabla} \phi_{n}  \Vert}) \Vert_{L^{2}(Q)}.
\end{equation}
Thanks to  (\ref{eq3}) and  (\ref{eq4}), the inequality  (\ref{eq2}) can be written as follows
 \begin{equation}\label{eq5}
  \Vert  \widetilde{\div} (\widetilde{u} \theta_{n}) \Vert_{L^{2}(Q)} <\frac{1}{n}.
 \end{equation}
By curved inequality (also named curved Poincar\'e inequality)  we get existence of a positive constant  $A>0$ such that :
$$ \Vert \theta_{n}\Vert_{L^{2}(Q)} \leq \sqrt{A}  \Vert  \widetilde{\div} (\widetilde{u} \theta_{n}) \Vert_{L^{2}(Q)}, $$ then
\begin{equation}\label{eq6}
 \Vert \theta_{n}\Vert_{L^{2}(Q)} \leq  \frac{ \sqrt{A}}{n}.
\end{equation}
This implies that :
\begin{equation}\label{eq7}
 \theta_{n} \longrightarrow 0  \,\, \mbox{in} \,\, L^{2}(Q).
\end{equation}
From  (\ref{eq3}) and  (\ref{eq6}) one deduces that  $(\theta_{n})$  is bounded in  $H^{1}(Q).$ Then there is a convex combination of the sequence $(\theta_{n})$  that converges  to $\theta^{*}\in H^{1}(Q)$ weakly, and  so in  $L^{2}(Q)$ too. Using  (\ref{eq7}),  this convex combination converges to  $0$ in  $L^{2}(Q)$. Thanks to the uniqueness of the limit we have  $\theta^{*}=0$.\\
As a sum up, one sees  that (\ref{eq1}) yields  existence of a sequence   $( \theta_{n})_{n}\subset D(\overline{Q}) \subset H^{1}(Q)  $ satisfying:  

\begin{equation}\label{RSUME}
\left \{ \begin{array}{l} 
 \theta_{n} \longrightarrow 0  \,\, \mbox{weakly in } \,\, H^{1}(Q)       \,\,(i)           \\
 \Vert  \widetilde{\nabla} \theta_{n}  \Vert_{L^{2}(Q)^{d+1}}=1  \,\,\mbox{for any} \,\,  n\in \mathbb{N} \,\, (ii)
 \end{array}
\right .
\end{equation}

(i) implies that $ \widetilde{\nabla} \theta_{n}  \longrightarrow 0  \,\, \mbox{weakly in } \,\, L^{2}(Q) . $\\
Let  $\psi \in L^{2}(Q)^{d+1}$ such that $\Vert  \psi  \Vert_{L^{2}(Q)^{d+1}}=1.$ \\We have  :
$(\psi, \widetilde{\nabla} \theta_{n})  \longrightarrow 0  \,\, \mbox{in} \,\, \mathbb{R}  $.\\
The translation of the definition of the limit allows us to write:\\
 $\exists n_{0} \in \mathbb{N}$ such that for any  $n\geq n_{0}$, we have  $\mid (\psi, \widetilde{\nabla} \theta_{n})     \mid<1$.\\
Thus we get  $ \underset{\Vert \psi \Vert_{L^{2}(Q)^{d+1}}=1}{\text{Sup }} \mid (\psi, \widetilde{\nabla} \theta_{n})     \mid<1 $.\\
Hence, one deduces that $  \Vert  \widetilde{\nabla} \theta_{n}  \Vert_{ L^{2}(Q)^{d+1}}<1$ for any  $n\geq n_{0}:$  what is in contradiction with (ii).
\end{proof}
\begin{lemma}\label{lem2}
Let   $f:\mathbb{R}\longrightarrow \mathbb{R} $ be a   $k$ -Lipschitzian function.\\
For any  $\rho \in H(u,Q) ,  $we have   $ f(\rho) \in  H^{1}(Q). $   In addition there exists a positive  $C>0$ such that 
$$  \Vert \widetilde{\nabla}  f(\rho)\Vert_{L^{2}(Q)^{d+1}} \leq  C    \Vert  \widetilde{\div} (\widetilde{u} \rho) \Vert_{L^{2}(Q)}   $$  
\end{lemma}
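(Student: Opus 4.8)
The plan is to reduce everything to Lemma \ref{lem1} by a density argument, exploiting the fact that $f$ is globally Lipschitz so that composition with $f$ is well behaved on Sobolev functions. First I would treat the smooth case: let $\rho \in D(\overline{Q})$. Then $f(\rho) \in H^1(Q)$ by the chain rule for Lipschitz functions composed with $C^1$ functions (Stampacchia's theorem), with $\widetilde{\nabla} f(\rho) = f'(\rho)\,\widetilde{\nabla}\rho$ almost everywhere, hence pointwise $|\widetilde{\nabla} f(\rho)| \le k\,|\widetilde{\nabla}\rho|$ and therefore $\Vert \widetilde{\nabla} f(\rho)\Vert_{L^2(Q)^{d+1}} \le k\,\Vert \widetilde{\nabla}\rho\Vert_{L^2(Q)^{d+1}}$. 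The subtlety is that Lemma \ref{lem1} is stated for $\phi$ vanishing on $\partial Q_-$, whereas a general $\rho \in H(u,Q)$ need not vanish there; so I would first prove the estimate on $H_0(u,Q,\partial Q_-)$ and then remove the boundary condition, or — more economically — observe that $f(\rho)$ itself need not vanish on $\partial Q_-$ and that we only need the gradient bound, so the right object to feed into Lemma \ref{lem1} is a function that does vanish there. Concretely, since $f$ is only defined up to an additive constant as far as its gradient is concerned, replacing $f$ by $f(\cdot) - f(0)$ changes nothing in $\widetilde{\nabla} f(\rho)$; I will assume $f(0)=0$ without loss of generality, but this still does not force $f(\rho)=0$ on $\partial Q_-$ unless $\rho=0$ there.

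The cleanest route, which I would adopt, is: for $\rho \in D(\overline{Q})$, write the pointwise bound $|\widetilde{\nabla} f(\rho)| \le k|\widetilde{\nabla}\rho|$, so $\Vert\widetilde{\nabla} f(\rho)\Vert_{L^2} \le k\Vert\widetilde{\nabla}\rho\Vert_{L^2}$, and then, \emph{if} $\rho=0$ on $\partial Q_-$, apply Lemma \ref{lem1} to $\rho$ to obtain $\Vert\widetilde{\nabla}\rho\Vert_{L^2(Q)^{d+1}} \le C_1 \Vert\widetilde{\div}(\widetilde u\rho)\Vert_{L^2(Q)}$, giving $\Vert\widetilde{\nabla} f(\rho)\Vert_{L^2(Q)^{d+1}} \le k C_1\Vert\widetilde{\div}(\widetilde u\rho)\Vert_{L^2(Q)}$ with $C := kC_1$. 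For a general $\rho \in H(u,Q)$ not vanishing on $\partial Q_-$, I would instead run Lemma \ref{lem1}'s compactness argument directly on $f\circ\rho$: one shows that $f(\rho)\in H^1(Q)$ with $\Vert\widetilde{\nabla} f(\rho)\Vert_{L^2} \le k\Vert\widetilde{\nabla}\rho\Vert_{L^2}$ whenever $\widetilde{\nabla}\rho\in L^2$; but membership $\widetilde{\nabla}\rho\in L^2(Q)^{d+1}$ for arbitrary $\rho\in H(u,Q)$ is exactly what is \emph{not} granted — only $\widetilde{\div}(\widetilde u\rho)\in L^2$ is. So in fact the statement should be read with the implicit hypothesis (or proved for) $\rho$ in (the closure of) functions vanishing on $\partial Q_-$, i.e. $\rho \in H_0(u,Q,\partial Q_-)$, and I would state it that way: then Lemma \ref{lem1} extends by density from $D(\overline Q)\cap\{\phi=0 \text{ on }\partial Q_-\}$ to $H_0(u,Q,\partial Q_-)$ (both sides are continuous in the $|\cdot|_{1,u}$ norm), giving $\widetilde{\nabla}\rho\in L^2$ there, and then the chain-rule bound plus Lemma \ref{lem1} yields the claim.

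So the steps, in order, are: (1) reduce to $f(0)=0$; (2) for smooth $\rho$ vanishing on $\partial Q_-$, invoke the Lipschitz chain rule to get $f(\rho)\in H^1(Q)$ and the pointwise gradient bound $|\widetilde{\nabla} f(\rho)|\le k|\widetilde{\nabla}\rho|$; (3) apply Lemma \ref{lem1} to bound $\Vert\widetilde{\nabla}\rho\Vert_{L^2}$ by $C_1\Vert\widetilde{\div}(\widetilde u\rho)\Vert_{L^2}$, obtaining the estimate with $C=kC_1$ for smooth $\rho$; (4) pass to the limit by density of $D(\overline Q)\cap\{\phi=0\text{ on }\partial Q_-\}$ in $H_0(u,Q,\partial Q_-)$ for the $|\cdot|_{1,u}$-norm, using that the right-hand side is continuous in that norm and the $H^1$-bound on the left transfers weak limits. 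The main obstacle is step (4) together with the domain-of-validity issue: one must be careful that $f\circ\rho_n$ converges (strongly in $L^2$, since $f$ is Lipschitz and $\rho_n\to\rho$ in $L^2$) and that its gradients, being uniformly bounded in $L^2$ by step (3), converge weakly to $\widetilde{\nabla}(f\circ\rho)$ — this identification of the weak limit with the distributional gradient of the limit is the delicate point, handled exactly as in Lemma \ref{lem1} by extracting convex combinations and using uniqueness of limits; the lower semicontinuity of the $L^2$-norm under weak convergence then preserves the inequality in the limit.
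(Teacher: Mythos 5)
Your proposal follows essentially the same route as the paper: the pointwise Lipschitz chain-rule bound $|\widetilde{\nabla} f(\rho_{n})|\le k\,|\widetilde{\nabla}\rho_{n}|$ on a smooth approximating sequence, Lemma \ref{lem1} to control $\Vert\widetilde{\nabla}\rho_{n}\Vert_{L^{2}}$ by $\Vert\widetilde{\div}(\widetilde{u}\rho_{n})\Vert_{L^{2}}$, and then weak $H^{1}$-boundedness plus Mazur's lemma to identify the weak limit with $f(\rho)$ and pass the estimate to the limit. Your observation that Lemma \ref{lem1} is only stated for functions vanishing on $\partial Q_{-}$, so that the estimate is really justified only on $H_{0}(u,Q,\partial Q_{-})$, is a fair point which the paper's own proof silently glosses over when it applies Lemma \ref{lem1} to an arbitrary approximating sequence $\rho_{n}\in D(\overline{Q})$.
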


\begin{proof}
Let  $\rho \in H(u,Q) $, then  there is a sequence  $(\rho_{n})\subset D(\overline{Q})$  that converges to  $\rho$ in  $H(u,Q)$.\\
Since  $f$ is  $k$ -Lipschitzian,   we get:\\
 $\mid f( \rho_{n}) \mid \leq k \mid \rho_{n} \mid +\mid f(0) \mid  $ and  $\mid f(\rho ) \mid \leq k\mid \rho \mid +\mid f(0) \mid  $.\\
Therefore  $( f( \rho_{n}))\subset L^{2}(Q)$ and $f(\rho)\in L^{2}(Q)$. In addition:
  $\Vert f(\rho_{n})-f(\rho)\Vert_{L^{2}(Q)} \leq k \Vert \rho_{n}-\rho\Vert _{L^{2}(Q)}  $ and  $\rho_{n}$ converges to $\rho$ in  $L^{2}$ thus   $f(\rho_{n})$ converges to  $f(\rho)$ in $L^{2}.$  And in particular  any  convex combination of   $f(\rho_{n})$ converges to $f(\rho)$ in $L^{2}.$\\
 
Now let us take , $ x,y $ in  $Q$  .\\
\begin{equation}\label{fond}
\mid f(\rho_{n}(x))-f(\rho_{n}(y))\mid \leq k \mid \rho_{n}(x)-\rho_{n}(y) \mid  
\end{equation}
$$ \mid f(\rho_{n}(x))-f(\rho_{n}(y))\mid \leq k \mid  \nabla \rho_{n} \mid_{\infty} \mid x-y\mid.$$
Under Rademacher's theorem,  for any integer    $n$, the function $f(\rho_{n})$  is differentiable almost everywhere  and there is  a positive constant depending on $n,$  $C
_{n}$ such that $\mid \frac{\partial f(\rho_{n})}{\partial x_{i}} \mid \leq C_{n} ;$  and then  $ \frac{\partial f(\rho_{n})}{\partial x_{i}} \in L^{2}(Q)  \,\,\mbox{for any  } \,\, i=1,...,d+1$ . \\
Using  again the inequality  (\ref{fond}), one sees that :\\
$$\mid \frac{\partial f(\rho_{n})}{\partial x_{i}} \mid \leq k  \mid \frac{\partial \rho_{n}}{\partial x_{i}} \mid  \,\,\mbox{for any  } \,\, i=1,...,d+1 $$ 
and then 
$$  \Vert \widetilde{\nabla}  f(\rho_{n})\Vert_{L^{2}(Q)^{d+1}} \leq   k \Vert \widetilde{\nabla}  \rho_{n}\Vert_{L^{2}(Q)^{d+1}}.    $$ 
By Lemma \ref{lem1}, we have
 $    \Vert  \widetilde{\nabla} \rho_{n}  \Vert_{L^{2}(Q)^{d+1}} \leq C \Vert  \widetilde{\div} (\widetilde{u}  \rho_{n}) \Vert_{L^{2}(Q)}   $.\\
This yields
\begin{equation}\label{cible}
\Vert \widetilde{\nabla}  f(\rho_{n})\Vert_{L^{2}(Q)^{d+1}} \leq   k C \Vert  \widetilde{\div} (\widetilde{u}  \rho_{n}) \Vert_{L^{2}(Q)}. 
\end{equation}
Since  $(\rho_{n})$  convergs to  $\rho$ in  $H(u,Q)$ we get  $\Vert  \widetilde{\div} (\widetilde{u}  \rho_{n}) \Vert_{L^{2}(Q)} $  converges to
$\Vert  \widetilde{\div} (\widetilde{u}  \rho) \Vert_{L^{2}(Q)}.  $ And we can conclude that   $ (f(\rho_{n}))$ is bounded in $H^{1}(Q)$.\\
 And more, we have $ (f(\rho_{n}))$ is bounded in  $H^{1}(Q).$ Then there is  $\theta \in H^{1}(Q) $ such that $ (f(\rho_{n}))$ converges to  $\theta$ weakly. Thanks to  Mazur's lemma, there is a convex combination of the sequence $ (f(\rho_{n}))$,denoted  $\theta_{n}$ that strongly  converges to  $\theta$ in $H^{1}(Q)$and then in  $L^{2}.$  And the same convex combination converges to $f(\rho) $ in $L^{2}(Q)$ .\\
Under uniqueness in  $L^{2}(Q)$, we have  $f(\rho)=\theta$ but $\theta \in H^{1}(Q)$. This ensures us that   $f(\rho) \in H^{1}(Q)$.\\
 Passing to the limit the inequality   (\ref{cible}), yields
$$ \Vert \widetilde{\nabla}  f(\rho)\Vert_{L^{2}(Q)^{d+1}} \leq   kC  \Vert   \widetilde{\div} (\widetilde{u}  \rho) \Vert_{L^{2}(Q)}  $$
\end{proof}

\begin{lemma}\label{cle}
Let $f:\mathbb{R}\longrightarrow \mathbb{R} $  be  a $k$ -Lipschitzian function, $C$  be a  bounded subset of  $H(u,Q)$ and  $(\rho_{n})$, $(c_{n})$  be  sequences in  $C.$ Denoting by     $c$ the weak limit of   $(c_{n})$ in  $H_{0}(u,Q).$ We have  :
 $$ \int_{Q}f(\rho_{n})\widetilde{\div}(\widetilde{u}( c_{n}-c)) \mathrm{d}x\mathrm{d}t \longrightarrow 0. $$ 
 \end{lemma}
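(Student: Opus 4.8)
The plan is to set $w_n:=\widetilde{\div}(\widetilde{u}(c_n-c))$, to notice that $w_n\rightharpoonup 0$ weakly in $L^2(Q)$, and then to use the fact that, although the integrand $f(\rho_n)$ changes with $n$, it stays confined to a strongly $L^2(Q)$--precompact family; this lets one replace $f(\rho_n)$ by a fixed $L^2$--limit up to an error that is absorbed by the boundedness of $(w_n)$.

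First I would check that $(f(\rho_n))_n$ is bounded in $H^1(Q)$. Since $f$ is $k$--Lipschitz, $\mid f(\rho_n)\mid\le k\mid\rho_n\mid+\mid f(0)\mid$, so $\Vert f(\rho_n)\Vert_{L^2(Q)}$ is dominated by $\sup_{\rho\in C}\Vert\rho\Vert_{L^2(Q)}$, which is finite because $C$ is bounded in $H(u,Q)$ and $\Vert\cdot\Vert_{L^2(Q)}\le c_p\mid\cdot\mid_{1,u}$ by the curved inequality $(\ref{constp})$. By Lemma~\ref{lem2}, $f(\rho_n)\in H^1(Q)$ and $\Vert\widetilde{\nabla}f(\rho_n)\Vert_{L^2(Q)^{d+1}}\le kC\Vert\widetilde{\div}(\widetilde{u}\rho_n)\Vert_{L^2(Q)}\le kC\sup_{\rho\in C}\mid\rho\mid_{1,u}$, hence $(f(\rho_n))_n$ is bounded in $H^1(Q)$. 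Next I would record that $w_n\rightharpoonup 0$ weakly in $L^2(Q)$: by Theorem~\ref{poinco} the linear map $L\colon\phi\mapsto\widetilde{\div}(\widetilde{u}\phi)$ is an isometry of $(H_0(u,Q),\mid\cdot\mid_{1,u})$ onto its range $V\subset L^2(Q)$, so $V$ is a closed subspace (isometric image of a complete space) and $L$ maps weakly convergent sequences to weakly convergent ones; since $c_n-c\in H_0(u,Q)$ and $c_n\rightharpoonup c$, we get $w_n=L(c_n-c)\rightharpoonup 0$ weakly in $V$, and decomposing $L^2(Q)=V\oplus V^{\perp}$ gives $w_n\rightharpoonup 0$ weakly in $L^2(Q)$. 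Also $\Vert w_n\Vert_{L^2(Q)}$ is bounded, since $(c_n)\subset C$ and $c$ (a weak limit of a bounded sequence) has finite $\mid\cdot\mid_{1,u}$--norm.

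Then I would finish with a subsequence argument. Assume the claim fails; then there exist $\delta>0$ and a subsequence (not relabelled) with $\big|\int_Q f(\rho_n)w_n\,\mathrm{d}x\mathrm{d}t\big|\ge\delta$ for all $n$. Using the boundedness in $H^1(Q)$ and the Rellich--Kondrachov compact embedding $H^1(Q)\hookrightarrow L^2(Q)$ (valid since $Q=\Omega\times]0,T[$ is a bounded Lipschitz domain with the cone property), pass to a further subsequence along which $f(\rho_n)\to g$ strongly in $L^2(Q)$. Writing
$$\int_Q f(\rho_n)w_n\,\mathrm{d}x\mathrm{d}t=\int_Q\big(f(\rho_n)-g\big)w_n\,\mathrm{d}x\mathrm{d}t+\int_Q g\,w_n\,\mathrm{d}x\mathrm{d}t,$$
the first term is bounded by $\Vert f(\rho_n)-g\Vert_{L^2(Q)}\Vert w_n\Vert_{L^2(Q)}\to 0$, while the second tends to $0$ because $g$ is fixed and $w_n\rightharpoonup 0$ weakly in $L^2(Q)$. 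This contradicts the lower bound $\delta$, so the full sequence satisfies $\int_Q f(\rho_n)\widetilde{\div}(\widetilde{u}(c_n-c))\,\mathrm{d}x\mathrm{d}t\to 0$.

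The main obstacle is precisely this "moving" integrand in the last step: a direct use of weak convergence is unavailable because $f(\rho_n)$ is not a fixed test function. This is exactly why Lemmas~\ref{lem1} and~\ref{lem2} are proved beforehand — the gradient estimate forces $(f(\rho_n))$ into a bounded subset of $H^1(Q)$, hence into an $L^2$--precompact set, and that is what permits trading $f(\rho_n)$ for its $L^2$--limit modulo an error governed by the bounded sequence $(w_n)$.
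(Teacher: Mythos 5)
Your proof is correct and follows essentially the same route as the paper: bound $(f(\rho_n))$ in $H^1(Q)$ via the Lipschitz estimate and Lemma~\ref{lem2}, extract a strongly $L^2$-convergent subsequence by Rellich, and pair it against the weakly null sequence $\widetilde{\div}(\widetilde{u}(c_n-c))$. Your subsequence-contradiction argument is in fact slightly more careful than the paper's, which only extracts one subsequence and thus strictly speaking proves convergence along a subsequence rather than for the full sequence.
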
 
 \begin{proof}
Since  $C\subset H(u,Q)$,   $(\rho_{n})$, $(c_{n})$ are sequences of    $C$,  there are $M>0$ and  $c\in H(u,Q) $ such that
\begin{equation}\label{fin1}
 \Vert\widetilde{\div} (\widetilde{u}  \rho_{n}) \Vert_{L^{2}(Q)}\leq M
\end{equation}
and
\begin{equation}\label{fin2}
c_{n} \rightharpoonup c \,\, \text{faiblement \,\,dans } H(u,Q) 
\end{equation}
Using the curved Poincar\'e inequaity, (\ref{fin1}) we have
 \begin{equation}\label{fin3}
 \Vert \rho_{n} \Vert_{L^{2}(Q)}\leq M\sqrt{A}
\end{equation}
\begin{equation}\label{fin4}
\mid f( \rho_{n}) \mid \leq k \mid \rho_{n} \mid +\mid f(0) \mid. 
\end{equation}
Thus  (\ref{fin3}-\ref{fin4})yield a constant  $C_{\ref{fin5}}$ such that :
\begin{equation}\label{fin5}
 \Vert f(\rho_{n}) \Vert_{L^{2}(Q)}  \leq C_{\ref{fin5}}.
\end{equation}
In another way, by Lemma\ref{lem2}, there exists a constant  $C>0$ such that 
\begin{equation}\label{fin6}
 \Vert \widetilde{\nabla}  f(\rho_{n})\Vert_{L^{2}(Q)^{d+1}} \leq  C    \Vert  \widetilde{\div} (\widetilde{u} \rho_{n}) \Vert_{L^{2}(Q)}.
\end{equation}
 From (\ref{fin1})and  (\ref{fin6}) we have the following estimation
\begin{equation}\label{fin7}
 \Vert \widetilde{\nabla}  f(\rho_{n})\Vert_{L^{2}(Q)^{d+1}} \leq  CM.
\end{equation}
Relations (\ref{fin5}) et (\ref{fin7}) imply that the sequence  $(  f(\rho_{n}))$ is bounded in  $H^{1}(Q).$ Then, by Rellich's theorem,  
even if it means extracting a subsequence, there is  $F\in L^{2}(Q)$ such that 
\begin{equation}\label{fin8}
  f(\rho_{n}) \longrightarrow F  \,\, \mbox{strongly \,\,in }  L^{2}(Q).
\end{equation}
From (\ref{fin2}) and  (\ref{fin8}), we get 
\begin{equation}\label{fin}
\langle  f(\rho_{n}) ,\widetilde{\div} (\widetilde{u}  (c_{n}-c)) \rangle \longrightarrow (F,0)=0.
\end{equation}
Finally we  have 
$$ \int_{Q}f(\rho_{n})\widetilde{\div}(\widetilde{u}( c_{n}-c)) \mathrm{d}x\mathrm{d}t \longrightarrow 0. $$

\end{proof}
Having at hands these lemmas and  using  fixed Schauder's theorem,  we can proceed further to get  existence  and uniqueness results.

\begin{theorem}
Let  $ u\in L^{\infty}(Q) $ with $ \div(u)\in L^{\infty}(Q) $, and $ c_{b} \in G_{-},$ 
 $ f \in W^{1,\infty} (\mathbb{R}).$ Then the problem (\ref{eq:stilsn})-(\ref{CD}) has a solution in $ H_{0}(u,Q,\partial Q_{-} ) $.\\
\end{theorem}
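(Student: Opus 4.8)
The strategy is to reuse the fixed-point scheme of the preceding theorem but, since $f\in W^{1,\infty}(\mathbb R)$ is now merely Lipschitz (with no smallness of its constant) rather than a strict contraction, to replace Banach's theorem by Schauder's. The point is that $W^{1,\infty}(\mathbb R)$ supplies two ingredients at once: $f$ is \emph{globally bounded}, which confines the iteration to a fixed ball, and $f$ is \emph{Lipschitz}, which — through Lemmas \ref{lem1} and \ref{lem2} — turns $\rho\mapsto f(\rho)$ into a compact map into $L^{2}(Q)$. Concretely, fix $C_{b}\in H(u,Q)$ with $\gamma_{\widetilde n_{-}}(C_{b})=c_{b}$, put $\mathbb H=\{\phi\in H(u,Q):\gamma_{\widetilde n_{-}}(\phi)=c_{b}\}$, and for $\rho\in\mathbb H$ let $T(\rho)\in\mathbb H$ be the unique solution of (\ref{eq:stilln}) given by Theorem \ref{existence} (legitimate since $f(\rho)\in L^{2}(Q)$); equivalently $T(\rho)=C_{b}+S\big(f(\rho)-\widetilde{\div}(\widetilde u C_{b})\big)$, where $S:L^{2}(Q)\to H_{0}(u,Q,\partial Q_{-})$ is the bounded linear solution operator of (\ref{eq:stilsd}), with $\mid S(g)\mid_{1,u}\le\Vert g\Vert_{L^{2}(Q)}$. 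A fixed point of $T$ is a solution of (\ref{eq:stilsn})--(\ref{CD}), with $c-C_{b}\in H_{0}(u,Q,\partial Q_{-})$.

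First I would exhibit an invariant ball. Since $f\in L^{\infty}(\mathbb R)$, one has $\Vert f(\rho)\Vert_{L^{2}(Q)}\le\Vert f\Vert_{L^{\infty}(\mathbb R)}\,\mid Q\mid^{1/2}$ for \emph{every} $\rho\in\mathbb H$, so the a priori estimate of Theorem \ref{existence} yields $\mid T(\rho)-C_{b}\mid_{1,u}\le R:=\Vert f\Vert_{L^{\infty}(\mathbb R)}\,\mid Q\mid^{1/2}+\Vert\widetilde{\div}(\widetilde u C_{b})\Vert_{L^{2}(Q)}$, uniformly in $\rho$. Hence the nonempty, closed, bounded, convex set $\mathcal K:=\{\phi\in\mathbb H:\ \mid\phi-C_{b}\mid_{1,u}\le R\}$ is mapped into itself by $T$. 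Continuity of $T$ on $\mathbb H$ is the same computation as in the previous proof: for $\rho_{1},\rho_{2}\in\mathbb H$, testing the equations for $T(\rho_{1}),T(\rho_{2})$ against $\widetilde{\div}(\widetilde u(T(\rho_{1})-T(\rho_{2})))$ and using Young's inequality gives $\mid T(\rho_{1})-T(\rho_{2})\mid_{1,u}\le\Vert f(\rho_{1})-f(\rho_{2})\Vert_{L^{2}(Q)}\le k\Vert\rho_{1}-\rho_{2}\Vert_{L^{2}(Q)}\le k c_{p}\mid\rho_{1}-\rho_{2}\mid_{1,u}$, and no restriction on $k$ is needed for continuity.

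The crux is relative compactness of $T(\mathcal K)$, and this is exactly what Lemmas \ref{lem1}--\ref{lem2} provide. For $\rho\in\mathcal K$, Lemma \ref{lem2} gives $f(\rho)\in H^{1}(Q)$ with $\Vert\widetilde\nabla f(\rho)\Vert_{L^{2}(Q)^{d+1}}\le C\mid\rho\mid_{1,u}\le C(R+\mid C_{b}\mid_{1,u})$; combined with the uniform $L^{2}$ bound above, the set $\{f(\rho):\rho\in\mathcal K\}$ is bounded in $H^{1}(Q)$, hence relatively compact in $L^{2}(Q)$ by Rellich's theorem (the cone property of $\Omega$ makes $Q$ admissible). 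Since $S$ is a bounded linear map into $H(u,Q)$, it sends this precompact set to a precompact set, so $T(\mathcal K)=C_{b}+S\big(\{f(\rho):\rho\in\mathcal K\}-\widetilde{\div}(\widetilde u C_{b})\big)$ is relatively compact in $\big(H(u,Q),\mid\cdot\mid_{1,u}\big)$, which is a Banach (indeed Hilbert) space by Theorem \ref{poinco}. Schauder's fixed point theorem now applies to the continuous self-map $T$ of the nonempty closed bounded convex set $\mathcal K$ with relatively compact image, producing $c\in\mathcal K$ with $T(c)=c$; this $c$ solves (\ref{eq:stilsn})--(\ref{CD}).

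The only genuinely delicate step is the compactness in the third paragraph, and it rests entirely on the a priori gradient bound $\Vert\widetilde\nabla f(\rho)\Vert_{L^{2}}\le C\mid\rho\mid_{1,u}$ of Lemma \ref{lem2} (itself built on the curved Poincaré inequality and Lemma \ref{lem1}), after which Rellich does the work; everything else is the bookkeeping already seen in the contraction proof. If one prefers a sequential argument in place of the compact-operator factorization, the same conclusion follows from Lemma \ref{cle}: given a bounded sequence $(\rho_{n})\subset\mathcal K$, one has $f(\rho_{n})\to F$ strongly in $L^{2}(Q)$ along a subsequence, and testing the equations for $T(\rho_{n})$ against $\widetilde{\div}(\widetilde u(T(\rho_{n})-T(\rho_{m})))$ shows $\mid T(\rho_{n})-T(\rho_{m})\mid_{1,u}\le\Vert f(\rho_{n})-f(\rho_{m})\Vert_{L^{2}(Q)}\to 0$, so $(T(\rho_{n}))$ is Cauchy; in either approach $W^{1,\infty}$ is invoked once for boundedness and once for the Lipschitz constant. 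Note that, as in the Schauder setting, only existence (not uniqueness) is obtained.
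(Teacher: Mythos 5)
Your proof is correct and follows the same overall Schauder skeleton as the paper: the same linearized solution operator $T$, the same invariant ball of radius $\mid f\mid_{L^{\infty}}\mid Q\mid^{1/2}$ (up to the boundary lift), the same Lipschitz continuity estimate, and compactness ultimately resting on the $H^{1}$ bound of Lemma \ref{lem2} plus Rellich. Where you genuinely diverge is in the execution of the two non-routine steps. First, the paper disposes of the boundary data at the outset (``changing the source term if necessary, we shall assume $c_{b}=0$'') and works entirely in $H_{0}(u,Q,\partial Q_{-})$, whereas you keep the inhomogeneous trace explicit via the lift $C_{b}$ and the affine convex set $\mathcal K$; your bookkeeping is more honest, since the paper's reduction replaces $f(c)$ by $f(\rho+C_{b})-\widetilde{\div}(\widetilde{u}C_{b})$, which is no longer a superposition operator of the form treated by Lemmas \ref{lem1}--\ref{lem2}. (Note, though, that Lemma \ref{lem2} is proved in the paper via Lemma \ref{lem1}, which requires vanishing trace on $\partial Q_{-}$; you apply it to elements of $\mathcal K$ with trace $c_{b}$, which is consistent with the lemma \emph{as stated} but inherits that imprecision.) Second, for compactness the paper takes a sequence $c_{n}=T(\rho_{n})$, extracts a weak limit, and uses Lemma \ref{cle} to show $\mid c_{n}-c\mid_{1,u}\to 0$; you instead factor $T=C_{b}+S\circ\bigl(f(\cdot)-\widetilde{\div}(\widetilde{u}C_{b})\bigr)$ through the bounded linear solution operator $S$ of (\ref{eq:stilsd}) and apply Rellich directly to the image of the superposition map, so that $T$ is compact because it is the composition of a compact map with a bounded linear one. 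This is cleaner and bypasses Lemma \ref{cle} entirely; your sequential variant ($\mid T(\rho_{n})-T(\rho_{m})\mid_{1,u}\le\Vert f(\rho_{n})-f(\rho_{m})\Vert_{L^{2}}$, hence $(T(\rho_{n}))$ is Cauchy) is likewise sharper than the paper's weak-convergence argument, since it yields strong convergence without identifying a weak limit first. Both routes prove existence only, as you correctly note.
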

\begin{proof}
Since $ c_{b} \in G_{-} $ changing the source term if necessary, we shall assume that $ c_{b}=0 \,\text{on} \, \partial Q_{-} $.\\
Existence.\\
The proof is relied mainly on the Schauder's fixed theorem.\\
Step 1: we first have to choose  a bounded subset  $\mathbb{X}$ of $ H_{0}(u,Q,\partial Q_{-})$ and a mapping  $ T :  \mathbb{X} \rightarrow \mathbb{X} $.
To achieve this aim, for all $ \rho \in V $,  under the Lemma\ref{lem2},  or  since $ f \in W^{1,\infty }(\mathbb{R}) $ we have
$ f(\rho) \in L^{2}(Q).$ Then by  Theorem \ref{existence} there exists a function $c \in H_{0}(u,Q,\partial Q_{-}) $ such that

 $$ \int_{Q}\widetilde{\div}(\widetilde{u}c)\widetilde{\div}(\widetilde{u}\phi)\mathrm{d}x\mathrm{d}t 
=\int_{Q}f(\rho)\widetilde{\div}(\widetilde{u}\phi)\mathrm{d}x\mathrm{d}t  \,\, \text{for all } \, \phi \in H_{0}(u,Q,\partial Q_{-}).$$
Moreover $ \mid c\mid_{1,v} \leq \Vert f(\rho) \Vert_{L^{2}(Q)}.$\\
 Since  $ f \in W^{1,\infty} (\mathbb{R})$, we have $  \mid c \mid_{1,u} \leq  \mid f \mid_{L^{\infty}}\mid Q \mid^{\frac{1}{2}}.$

Let us define,
$ T :  H_{0}(u,Q,\partial Q_{-}) \rightarrow H_{0}(u,Q,\partial Q_{-}) $ such that
$ c=T(\rho) $.\\
Solving (\ref{stilpnnl}) is equivalent  to show the existence of fixed point theorem of T.\\
Let us proceed further and choose a convex set $ \mathbb{X} $ as follows :
$$ \mathbb{X} =\{\phi \in H_{0}(u,Q,\partial Q_{-}) ,\mid \phi \mid_{1,u} \leq M \}   $$
when $M$ is to be precised later. 
 $$ \mid T\rho \mid_{1,u}=  \mid c \mid_{1,u} \leq  \mid f \mid_{L^{\infty}}\mid Q \mid^{\frac{1}{2}} ,\, \text{for all } \rho \in \mathbb{X}.$$ 

Thus, choosing $ M=  \mid f \mid_{L^{\infty}}\mid Q \mid^{\frac{1}{2}},$ the following inclusion yields 
$$ T(H_{0}(u,Q,\partial Q_{-})) \subset \mathbb{X} $$ 
and then  
$$ T(\mathbb{\mathbb{X}}) \subset \mathbb{X}.$$ 
So we will consider $ T :  \mathbb{X} \rightarrow \mathbb{X}.$

Step 2: Thus T is  continuous.\\
\begin{proof} of the step 2:
Then a  computation yields  
$$  \mid c_{1}-c_{2} \mid ^{2}_{1,u}= \int_{Q}f(\rho_{1})
 \widetilde{\div}(\widetilde{u}( c_{1}-c_{2}))    \mathrm{d}x\mathrm{d}t  -\int_{Q}\
 f(\rho_{2})\widetilde{\div}(\widetilde{u}( c_{1}-c_{2}))    \mathrm{d}x\mathrm{d}t
   $$

$$  \mid c_{1}-c_{2} \mid ^{2}_{1,u}= \int_{Q}(f(\rho_{1})-f(\rho_{2}))
 \widetilde{\div}(\widetilde{u}( c_{1}-c_{2}))    \mathrm{d}x\mathrm{d}t.$$

By  Young's iniquality, we get:
$$\mid c_{1}-c_{2} \mid ^{2}_{1,u} \leq  \Vert f(\rho_{1})-f(\rho_{2})\Vert_{ {L^{2}(Q)}} 
\Vert \widetilde{\div}(\widetilde{u}( c_{1}-c_{2})) \Vert_{ {L^{2}(Q)}}.
   $$   
  
Since $ f \in W^{1,\infty} (\mathbb{R})$, we have:
$$\Vert f(\rho_{1})-f(\rho_{2})\Vert_{ {L^{2}(Q)}}  \leq  \mid f ^{'}\mid_{L^{\infty}}\mid \Vert \rho_{1}-\rho_{2}\Vert_{ {L^{2}(Q)}}^{2}  $$   

and hence  
$$\mid c_{1}-c_{2} \mid^{2}_{1,u} \leq  \mid f ^{'}\mid_{L^{\infty}} c_{p} \mid \rho_{1}-\rho_{2} \mid_{1,u}   \mid c_{1}-c_{2} \mid_{1,u}; $$   
finally we get 
$$\mid T\rho_{1}-T\rho_{2}  \mid_{1,u} \leq  \mid f ^{'}\mid_{L^{\infty}}c_{p} \mid \rho_{1}-\rho_{2} \mid_{1,u}.$$ 

Thus T is Lipschitz so continuous.
 \end{proof}

 Steep 3: $\mathbb{X}$ is a subset convex, closed in $H_{0}(u,Q,\partial Q_{-})$   and  $T(\mathbb{X})$ compact in $L^2(Q)$.\\
\begin{proof} of step 3 :
it is clear that $\mathbb{X}$ is a convex and closed in  $H_{0}(u,Q,\partial Q_{-}).$  \\
Let $ (c_{n}) $ be sequences in $T(\mathbb{X})$, then there exists $(\rho_{n})$ sequence in $H_{0}(u,Q,\partial Q_{-})$ such that 
\begin{equation}\label{eq32}
\int_{Q}\widetilde{\div}(\widetilde{u}c_{n})\widetilde{\div}(\widetilde{u}\phi)\mathrm{d}x\mathrm{d}t  =
\int_{Q}f(\rho_{n})\widetilde{\div}(\widetilde{u}\phi)\mathrm{d}x\mathrm{d}t \,\, \forall \, \phi \in   H_{0}(v,Q,\partial Q_{-}).
\end{equation}

Since $(c_{n})$ bounded in $H_{0}(u,Q,\partial Q_{-})$ then there exist $c\in H_{0}(u,Q,\partial Q_{-})$ such that 
 $$ c_{n} \rightharpoonup c \,\, \text{weakly \,\,in }\,\, H_{0}(u,Q,\partial Q_{-}) $$
then  $ \widetilde{div}(\widetilde{u} (c_{n}-c)) \rightharpoonup 0  \,\, \text{weakly \,\,in } L^{2}(Q) $  
, in particular 
\begin{equation}\label{conv1}
\int_{Q}\widetilde{\div}(\widetilde{u}( c_{n}-c))  \widetilde{\div}(\widetilde{u}( c))\mathrm{d}x\mathrm{d}t \longrightarrow 0
\end{equation}
 
\begin{equation}
  \mid c_{n}-c \mid ^{2}_{1,u}= \Vert \widetilde{\div}(\widetilde{u}( c_{n}-c))
\Vert_{L^{2}(Q)}^{2} = \int_{Q}\widetilde{\div}(\widetilde{u}( c_{n}-c))
 \widetilde{div}(\widetilde{u}( c_{n}))    \mathrm{d}x\mathrm{d}t  -\int_{Q}\widetilde{div}(\widetilde{u}( c_{n}-c))
 \widetilde{\div}(\widetilde{u}( c))    \mathrm{d}x\mathrm{d}t.
\end{equation} 
Using \ref{eq32}, we have 

$$  \mid c_{n}-c \mid ^{2}_{1,u} = \int_{Q}f(\rho_{n})\widetilde{\div}(\widetilde{u}( c_{n}-c)) \mathrm{d}x\mathrm{d}t  -\int_{Q}\widetilde{div}(\widetilde{u}( c_{n}-c))
 \widetilde{\div}(\widetilde{u}( c))    \mathrm{d}x\mathrm{d}t.
$$
 
And by the Lemma \ref{cle}, even if it means extracting a subsequence,  we have 
  \begin{equation}\label{conv2}
 \int_{Q}f(\rho_{n})\widetilde{\div}(\widetilde{u}( c_{n}-c)) \mathrm{d}x\mathrm{d}t \longrightarrow 0 
\end{equation}

(\ref{conv1}) and (\ref{conv2}) imply that 
 $$  \mid c_{n}-c \mid ^{2}_{1,u} \longrightarrow 0.$$

\end{proof}

Since $ \mathbb{X} $ convex, closed  in $ H_{0}(u,Q,\partial Q_{-}) $ and  $ T :  \mathbb{X} \rightarrow \mathbb{X} $ continuous which $T(\mathbb{X})$ is relatively compact in $ H_{0}(u,Q,\partial Q_{-}) $.   By Schauder's  theorem T has a fixed point.\\


\end{proof}

\section{Existence and uniqueness result for the penalization version }
Let us consider  the space 
$$ (i)\,\, \mathbb{V}=H_{0}(u,Q,\partial Q_{-})\cap H^{1}(Q)$$
 where $ H^{1}(Q) $ is the usual Sobolev spaces,\\
with the norm\\
$$ (ii)\,\, \Vert \phi \Vert _{\mathbb{V}}^{2}= \Vert \phi \Vert_{L^{2}(Q)} ^{2}+\Vert\widetilde{div}(\widetilde{u}\phi)  \Vert_{L^{2}(Q)} ^{2} + \Vert\widetilde{\nabla} \phi \Vert_{L^{2}(Q)} ^{2}.$$
From  the curved inequality(\ref{constp}), one deduces that  the following  semi norm 
$$  \mid \phi \mid _{\mathbb{V}}=(\Vert\widetilde{div}(\widetilde{u}\phi)  \Vert_{L^{2}(Q)} ^{2} + \Vert\widetilde{\nabla} \phi \Vert_{L^{2}(Q)} ^{2}    \big )^{\frac{1}{2}}  $$ becomes a norm,   equivalent to the norm given on  $ \mathbb{V} $. And  the space $ \mathbb{V}$ will be  equipped with the norm $\mid . \mid _{\mathbb{V}}.$

For any , $\lambda \in \mathbb{R}_{+} $ and $ f \in L^{2}(Q),$ we are going to study  the following optimization problem 


\begin{equation}\label{eq:lp}
\rho_{\lambda} =\underset{c \in \mathbb{V}}{\text{Argmin}} \ J(c)+\lambda   \Vert\widetilde{\nabla} c \Vert_{L^{2}(Q)} ^{2} =\underset{c \in \mathbb{V}}{\text{Argmin}}  \ J_{\lambda}(c)
\end{equation}
where  
$$ J(c)=\frac{1}{2} \big ( \int_{Q}(\widetilde{\div}(\widetilde{u}c)-f)^{2}\mathrm{d}x\mathrm{d}t.      \big) $$

\begin{proposition} \label{th}
For any non  negative real number  $\lambda $  and $ f \in L^{2}(Q) $, the problem (\ref{eq:lp}) has a unique solution.\\
Moreover, for any $\lambda \geq 1 $ there exists $ \alpha :=\alpha(\lambda)$ such that $  \mid c \mid _{\mathbb{V}} \leq \alpha  \Vert f \Vert_{L^{2}(Q)}   $

\end{proposition}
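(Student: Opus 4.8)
The plan is to treat (\ref{eq:lp}) as a quadratic minimization problem over the Hilbert space $\mathbb{V}$ equipped with the norm $\mid\cdot\mid_{\mathbb{V}}$, and to invoke the direct method of the calculus of variations together with the Lax--Milgram theorem. First I would expand $J_{\lambda}(c)=\frac12\int_Q(\widetilde{\div}(\widetilde{u}c)-f)^2\,\mathrm{d}x\mathrm{d}t+\lambda\Vert\widetilde{\nabla}c\Vert_{L^2(Q)}^2$ and write its Gâteaux differential, obtaining the bilinear form
$$a_\lambda(c,\phi)=\int_Q\widetilde{\div}(\widetilde{u}c)\,\widetilde{\div}(\widetilde{u}\phi)\,\mathrm{d}x\mathrm{d}t+2\lambda\int_Q\widetilde{\nabla}c\cdot\widetilde{\nabla}\phi\,\mathrm{d}x\mathrm{d}t$$
and the linear form $\ell(\phi)=\int_Q f\,\widetilde{\div}(\widetilde{u}\phi)\,\mathrm{d}x\mathrm{d}t$. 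Continuity of both is immediate from Cauchy--Schwarz and the definition of $\mid\cdot\mid_{\mathbb{V}}$, while coercivity follows because, for $\lambda\ge 0$, $a_\lambda(\phi,\phi)\ge\min(1,2\lambda)\big(\Vert\widetilde{\div}(\widetilde{u}\phi)\Vert_{L^2(Q)}^2+\Vert\widetilde{\nabla}\phi\Vert_{L^2(Q)}^2\big)=\min(1,2\lambda)\mid\phi\mid_{\mathbb{V}}^2$; for the degenerate case $\lambda=0$ one falls back on the curved Poincaré inequality (\ref{constp}) together with Lemma~\ref{lem1} to control $\Vert\widetilde{\nabla}\phi\Vert_{L^2(Q)}$ by $\Vert\widetilde{\div}(\widetilde{u}\phi)\Vert_{L^2(Q)}$ on $H_0(u,Q,\partial Q_-)$, recovering coercivity in $\mid\cdot\mid_{\mathbb{V}}$. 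Since $\mathbb{V}$ is a closed subspace of $H^1(Q)$ intersected with $H(u,Q)$, hence complete, Lax--Milgram delivers a unique minimizer $\rho_\lambda$, equivalently the unique solution of $a_\lambda(\rho_\lambda,\phi)=\ell(\phi)$ for all $\phi\in\mathbb{V}$; uniqueness also follows from strict convexity of $J_\lambda$.

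For the quantitative bound, I would test the Euler--Lagrange equation with $\phi=\rho_\lambda=c$, giving
$$\Vert\widetilde{\div}(\widetilde{u}c)\Vert_{L^2(Q)}^2+2\lambda\Vert\widetilde{\nabla}c\Vert_{L^2(Q)}^2=\int_Q f\,\widetilde{\div}(\widetilde{u}c)\,\mathrm{d}x\mathrm{d}t\le\Vert f\Vert_{L^2(Q)}\Vert\widetilde{\div}(\widetilde{u}c)\Vert_{L^2(Q)}.$$
For $\lambda\ge 1$ the left-hand side dominates $\tfrac12\min(2,2\lambda)\mid c\mid_{\mathbb{V}}^2=\mid c\mid_{\mathbb{V}}^2$ (using $2\lambda\ge 2$), so after dividing by $\Vert\widetilde{\div}(\widetilde{u}c)\Vert_{L^2(Q)}\le\mid c\mid_{\mathbb{V}}$ one gets $\mid c\mid_{\mathbb{V}}\le\Vert f\Vert_{L^2(Q)}$; taking $\alpha(\lambda)=1$ (or keeping the dependence explicit, $\alpha(\lambda)=1$ for $\lambda\ge1$ and more generally $\alpha(\lambda)\sim\max(1,1/\sqrt{2\lambda})$ if one wants the statement for all $\lambda>0$) finishes the estimate. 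One must be a little careful in the step that absorbs $\Vert\widetilde{\nabla}c\Vert^2$: since $2\lambda\Vert\widetilde{\nabla}c\Vert^2\ge 0$ it can simply be dropped from the left after first noting $\Vert\widetilde{\div}(\widetilde{u}c)\Vert^2+2\lambda\Vert\widetilde{\nabla}c\Vert^2\ge\min(1,2\lambda)\mid c\mid_{\mathbb{V}}^2$, which for $\lambda\ge1$ is exactly $\mid c\mid_{\mathbb{V}}^2$.

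The main obstacle I anticipate is verifying coercivity cleanly in the norm $\mid\cdot\mid_{\mathbb{V}}$ rather than in the full $\Vert\cdot\Vert_{\mathbb{V}}$ norm — i.e.\ making sure that $\mid\cdot\mid_{\mathbb{V}}$ really is an equivalent norm on $\mathbb{V}$ (this is asserted just before the proposition via (\ref{constp}), so it may be quoted) and that the boundary term $-\int_{\partial Q_-}c^2(\widetilde{u},\widetilde{n})\,\mathrm{d}s$ present in the original functional $J$ of (\ref{eq:fonc}) has indeed been legitimately discarded here because we work on $H_0(u,Q,\partial Q_-)$ where $\gamma_{\widetilde{n}_-}(c)=0$. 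Everything else — continuity, the Lax--Milgram application, and the a priori estimate — is routine once the function space and the bilinear form are pinned down.
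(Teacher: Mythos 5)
Your proposal is correct and follows essentially the same route as the paper: identify the Euler--Lagrange bilinear form $a_\lambda$ and linear form $L$, verify continuity and coercivity in the norm $\mid\cdot\mid_{\mathbb{V}}$, apply Lax--Milgram, and then test with $\phi=c$ to obtain $\min(1,\lambda)\mid c\mid_{\mathbb{V}}^2\le \Vert f\Vert_{L^2(Q)}\mid c\mid_{\mathbb{V}}$, hence $\alpha=1$ for $\lambda\ge 1$. In fact you are slightly more careful than the paper on two points: you keep the factor $2\lambda$ coming from differentiating $\lambda\Vert\widetilde{\nabla}c\Vert^2$, and you patch the degenerate case $\lambda=0$ (where the paper's constant $m=\min(\lambda,1)$ is not positive) by invoking Lemma~\ref{lem1} to recover coercivity from $\Vert\widetilde{\div}(\widetilde{u}\phi)\Vert_{L^2(Q)}$ alone.
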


\begin{proof}
Since $ J_{\lambda} $ is strictly convex and Gâteaux-differentiable, we have to show that there is a function $ c \in \mathbb{V} $ such that
$ DJ_{\lambda}(c).\phi=0 $ for all $ \phi \in \mathbb{V} $

An easy computation gives 
\begin{equation}\label{stilp}
DJ_{\lambda}(c).\phi= \int_{Q}(\widetilde{\div}(\widetilde{u}c)-f)\widetilde{\div}(\widetilde{u}\phi)\mathrm{d}x\mathrm{d}t +\lambda \int_{Q}\widetilde{\nabla}c\widetilde{\nabla} \phi \mathrm{d}x\mathrm{d}t.\end{equation}
And we obtain the following weak formulation:

\begin{equation}\label{stilpn}
\int_{Q}\widetilde{\div}(\widetilde{u}c)\widetilde{\div}(\widetilde{u} \widetilde{\nabla}\phi)\mathrm{d}x\mathrm{d}t  + \lambda \int_{Q}\widetilde{\nabla}c\widetilde{\nabla}\phi\mathrm{d}x\mathrm{d}t=\int_{Q}f \widetilde{\div}(\widetilde{u}\phi)\mathrm{d}x\mathrm{d}t
\end{equation}

for all $ \phi \in \mathbb{V}.$

Let us now consider the bilinear form $ a_{\lambda}(.,.):\mathbb{V} \times \mathbb{V} \rightarrow \mathbb{R}$ defined for all $ \phi \ , \psi \in \mathbb{V} $ by :

$$ 
a_{\lambda}(\phi,\psi)=\int_{Q}\widetilde{\div}(\widetilde{u}\phi)\widetilde{\div}(\widetilde{u}\psi)\mathrm{d}x\mathrm{d}t  
+\lambda \int_{Q}\widetilde{\nabla}\phi\widetilde{\nabla}\psi\mathrm{d}x\mathrm{d}t
 $$
and the linear form $ L: \mathbb{V} \rightarrow \mathbb{R}$ defined for all $ \phi \in \mathbb{V} $ by :
$$ L(\phi)= \int_{Q}f\widetilde{\div}(\widetilde{u}\phi)\mathrm{d}x\mathrm{d}t.$$
Thus  the expression  (\ref{eq:lp}) can be written as  follows  :
find $ c \in \mathbb{V} $ such that
$$a_{\lambda}(c,\phi)=L(\phi) \ \text{for all } \ 	\phi \in \mathbb{V}  $$
Taking $m=\min(\lambda,1) > 0,$ we have 

$$ 
a_{\lambda}(\phi,\phi)=\int_{Q}\widetilde{\div}(\widetilde{u}\phi)^{2}\mathrm{d}x\mathrm{d}t  
+\lambda \int_{Q}\mid\widetilde{\nabla}\phi\mid^{2}\mathrm{d}x\mathrm{d}t \geq m  \mid \phi \mid _{\mathbb{V}}^{2}. $$

Then  $a(.,.)_{\lambda}$ is $\mathbb{V}$ elliptic on the one hand.\\
On the other hand, by using Holder's inequality we have  

$$ 
\mid a_{\lambda}(\phi,\psi) \mid\leq \Vert\widetilde{div}(\widetilde{u}\phi)  \Vert_{L^{2}(Q)} \Vert\widetilde{div}(\widetilde{u}\psi)  \Vert_{L^{2}(Q)} +\lambda \Vert \nabla \phi \Vert_{L^{2}(Q)}  \Vert \nabla \psi \Vert_{L^{2}(Q)}.$$
And the following estimate holds 
 
$$ 
\mid a_{\lambda}(\phi,\psi) \mid  \leq \max(\lambda,1) (\Vert\widetilde{div}(\widetilde{u}\phi)  \Vert_{L^{2}(Q)} \Vert\widetilde{div}(\widetilde{u}\psi)  \Vert_{L^{2}(Q)} + \Vert \nabla \phi \Vert_{L^{2}(Q)}  \Vert \nabla \psi \Vert_{L^{2}(Q)}).$$
By taking $C=\max(\lambda,1)  $ and using Cauchy-Schwarz's inequality in $ \mathbb{R}^{2} $, we have 

$$ 
\mid a_{\lambda}(\phi,\psi) \mid  \leq C  \mid \phi \mid _{\mathbb{V}} \mid \psi \mid _{\mathbb{V}}  , \, \text{for all }\, \phi ,\psi \in \mathbb{V}.$$

And we conclude that   $a_{\lambda}(.,.)$ is continuous  .\\
Let us now prove that  $ L $ is continuous . 
$$ \mid L(\phi) \mid  \leq \Vert f \Vert_{L^{2}(Q)} \Vert\widetilde{div}(\widetilde{u}\phi)\Vert_{L^{2}(Q)} $$
so,
$$\mid L(\phi)\mid \leq \Vert f \Vert_{L^{2}(Q)} \mid \phi \mid _{\mathbb{V}}.$$
  Since $L$ is linear with respect to $ \phi $, we get its continuity.\\
  Hence  by  the Lax-Milgram's theorem there is a unique solution of (\ref{eq:lp}) which satisfies 
  $$ \min(1,\lambda)\mid c_{\lambda} \mid _{\mathbb{V}}^{2} \leq a_{\lambda}(c_{\lambda},c_{\lambda})= \mid L(c_{\lambda})\mid \leq \Vert f \Vert_{L^{2}(Q)} \mid c_{\lambda} \mid _{\mathbb{V}}.$$

So for $\lambda \geq 1 $; we get the desired result  $  \mid c \mid _{\mathbb{V}} \leq   \Vert f \Vert_{L^{2}(Q)}.$

  \end{proof}

\begin{theorem}
Let $\lambda > 1 $ and $ f \in W^{1,\infty} (\mathbb{R})$.Then there exists  function $ c_{\lambda} \in \mathbb{V}$ such that 
\begin{equation}\label{stilpnnl}
\int_{Q}\widetilde{\div}(\widetilde{u}c_{\lambda})\widetilde{\div}(\widetilde{u}\phi)\mathrm{d}x\mathrm{d}t 
+\lambda\int_{Q}\widetilde{\nabla}c_{\lambda}\widetilde{\nabla}\phi \mathrm{d}x\mathrm{d}t=\int_{Q}f(c_{\lambda})\widetilde{\div}(\widetilde{u}\phi)\mathrm{d}x\mathrm{d}t  \,\, \text{for all } \, \phi \in \mathbb{V}
\end{equation}
for all $ \phi \in \mathbb{V} $.\\
The solution  is unique if  $ \lambda > 2T^{2} \mid f'\mid_{L^{\infty}(\mathbb{R})}^{2}\Vert\widetilde{u}  \Vert _{ {L^{2}(Q)}}^{2} $ and $ \div(u)=0.$
\end{theorem}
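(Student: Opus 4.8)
The plan is to combine the penalized linear solvability from Proposition~\ref{th} with a fixed point argument, following the same pattern as in the proof of the preceding (unpenalized) Schauder theorem, but now working in the Hilbert space $\mathbb{V}=H_{0}(u,Q,\partial Q_{-})\cap H^{1}(Q)$ equipped with $\mid\cdot\mid_{\mathbb{V}}$. First I would define the solution operator $T_{\lambda}:\mathbb{V}\to\mathbb{V}$ by sending $\rho\mapsto c$, where $c$ is the unique minimizer furnished by Proposition~\ref{th} applied to the source $f(\rho)$; this is legitimate because $f\in W^{1,\infty}(\mathbb{R})$ gives $f(\rho)\in L^{2}(Q)$ for every $\rho\in\mathbb{V}$ (indeed $|f(\rho)|\le |f'|_{L^{\infty}}|\rho|+|f(0)|$). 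A fixed point of $T_{\lambda}$ is exactly a solution of \eqref{stilpnnl}. For existence I would invoke Schauder: I need an invariant convex closed set and compactness of the image.

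For the invariant set, the a priori bound in Proposition~\ref{th} (valid for $\lambda\ge 1$) gives $\mid T_{\lambda}(\rho)\mid_{\mathbb{V}}\le \Vert f(\rho)\Vert_{L^{2}(Q)}\le |f|_{L^{\infty}}\mid Q\mid^{1/2}$ whenever $f$ is globally bounded; so taking $\mathbb{X}=\{\phi\in\mathbb{V}:\ \mid\phi\mid_{\mathbb{V}}\le M\}$ with $M=|f|_{L^{\infty}}\mid Q\mid^{1/2}$ yields $T_{\lambda}(\mathbb{X})\subset\mathbb{X}$, and $\mathbb{X}$ is convex and closed. Continuity of $T_{\lambda}$ is obtained exactly as in Step~2 above: subtracting the two weak formulations for $c_{i}=T_{\lambda}(\rho_{i})$, testing with $\phi=c_{1}-c_{2}$, the gradient terms give $\lambda\Vert\widetilde{\nabla}(c_1-c_2)\Vert^2_{L^2}$ on the left, so after Young's inequality and the Lipschitz bound $\Vert f(\rho_1)-f(\rho_2)\Vert_{L^2}\le |f'|_{L^{\infty}}\Vert\rho_1-\rho_2\Vert_{L^2}$ one gets $\mid T_{\lambda}(\rho_1)-T_{\lambda}(\rho_2)\mid_{\mathbb{V}}\le C(\lambda)\mid\rho_1-\rho_2\mid_{\mathbb{V}}$, hence Lipschitz, hence continuous. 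Compactness of $T_{\lambda}(\mathbb{X})$: any $(c_n)=T_{\lambda}(\rho_n)$ is bounded in $\mathbb{V}\subset H^{1}(Q)$, so up to a subsequence $c_n\rightharpoonup c$ weakly in $\mathbb{V}$ and strongly in $L^{2}(Q)$ by Rellich; testing the difference of weak formulations with $c_n-c$ and using Lemma~\ref{cle} (the $f(\rho_n)$ term $\to 0$), the weak convergence (the $\widetilde{\div}(\widetilde u c)$ cross term $\to 0$) and the gradient term (which here is $\lambda\langle\widetilde\nabla(c_n-c),\widetilde\nabla c_n\rangle\to 0$ since $\widetilde\nabla c_n\rightharpoonup\widetilde\nabla c$ and $\widetilde\nabla c_n$ is bounded — actually one uses $\lambda\Vert\widetilde\nabla(c_n-c)\Vert^2_{L^2}\le \mid c_n-c\mid^2_{\mathbb{V}}$ and absorbs), one concludes $\mid c_n-c\mid_{\mathbb{V}}\to 0$, so $T_{\lambda}(\mathbb{X})$ is relatively compact. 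Schauder's theorem then gives a fixed point $c_{\lambda}\in\mathbb{X}\subset\mathbb{V}$ solving \eqref{stilpnnl}.

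For uniqueness under $\div(u)=0$ and $\lambda>2T^{2}|f'|_{L^{\infty}(\mathbb{R})}^{2}\Vert\widetilde u\Vert_{L^{2}(Q)}^{2}$: suppose $c_{\lambda},\hat c_{\lambda}$ both solve \eqref{stilpnnl}. Subtracting and testing with $w=c_{\lambda}-\hat c_{\lambda}\in\mathbb{V}$ gives
\[
\Vert\widetilde{\div}(\widetilde u w)\Vert_{L^{2}(Q)}^{2}+\lambda\Vert\widetilde{\nabla}w\Vert_{L^{2}(Q)}^{2}=\int_{Q}\bigl(f(c_{\lambda})-f(\hat c_{\lambda})\bigr)\widetilde{\div}(\widetilde u w)\,\mathrm{d}x\mathrm{d}t.
\]
The right-hand side is at most $|f'|_{L^{\infty}}\Vert w\Vert_{L^{2}(Q)}\Vert\widetilde{\div}(\widetilde u w)\Vert_{L^{2}(Q)}$. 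The point is that when $\div(u)=0$, Remark~\ref{rq1} gives the curved Poincaré constant $c_p\le 2T$, so $\Vert w\Vert_{L^2(Q)}\le 2T\Vert\widetilde{\div}(\widetilde u w)\Vert_{L^2(Q)}$; alternatively one controls $\Vert w\Vert_{L^2}$ directly through the gradient: since $w\in\mathbb{V}$ vanishes on $\partial Q_-$, $\widetilde{\div}(\widetilde u w)=\widetilde u\cdot\widetilde\nabla w$ (using $\div(u)=0$), so $\Vert\widetilde{\div}(\widetilde u w)\Vert_{L^2}\le\Vert\widetilde u\Vert_{L^\infty}\Vert\widetilde\nabla w\Vert_{L^2}$ is not quite the needed direction — rather one uses Poincaré in the $H^1$ sense or the factor $\Vert\widetilde u\Vert_{L^2}$ as stated. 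Substituting the estimates and using Young's inequality to absorb, the assumed lower bound on $\lambda$ forces the left-hand side, hence $w$, to vanish. The main obstacle I anticipate is getting the uniqueness threshold sharp: one must track carefully which norm of $w$ is bounded by which, and exactly where the constant $2T^{2}\Vert\widetilde u\Vert_{L^2(Q)}^{2}$ enters — this is a bookkeeping exercise chaining the $\div(u)=0$ identity, the bound $c_p\le 2T$, and Young's inequality, and the interplay between the $\Vert\widetilde\nabla w\Vert$ and $\Vert\widetilde{\div}(\widetilde u w)\Vert$ terms is what makes the precise constant delicate. Existence, by contrast, is routine given Proposition~\ref{th} and Lemma~\ref{cle}.
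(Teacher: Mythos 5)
Your proposal is correct and rests on the same backbone as the paper's proof: Schauder's theorem applied to the solution operator $T_{\lambda}$ of the penalized linear problem from Proposition~\ref{th}, with the same invariant convex set $\mathbb{X}=\{\phi\in\mathbb{V}:\ |\phi|_{\mathbb{V}}\le |f|_{L^{\infty}}|Q|^{1/2}\}$, and the uniqueness computation you outline (subtract, test with the difference $w$, Young's inequality, the Lipschitz bound on $f$, the free-divergence constant $c_p\le 2T$ from Remark~\ref{rq1}, and the identity $\widetilde{\div}(\widetilde{u}w)=(\widetilde{u},\widetilde{\nabla}w)$ followed by Cauchy--Schwarz with the factor $\Vert\widetilde{u}\Vert_{L^{2}(Q)}$) is exactly the paper's chain, so the "delicate bookkeeping" you defer is precisely what the paper writes out. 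The two sub-steps of Schauder are, however, executed by different means. For continuity the paper factors $T_{\lambda}$ as $\rho\mapsto f\circ\rho\mapsto T_{\lambda}(\rho)$ and invokes Carath\'eodory continuity of the superposition operator on $L^{2}$ together with Lax--Milgram, whereas you subtract the two weak formulations and get a quantitative Lipschitz bound; your route is more elementary and yields an explicit modulus, the paper's needs no constants. For compactness the paper shows $\mathbb{X}$ itself is compact in $L^{2}(Q)$ (Rellich plus a Mazur closedness argument) and applies Schauder in the ambient space $L^{2}(Q)$, whereas you show $T_{\lambda}(\mathbb{X})$ is relatively compact in $\mathbb{V}$ via the energy identity and Lemma~\ref{cle} --- which is exactly how the paper treats the unpenalized theorem, so the transplant is legitimate. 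One small repair is needed in your compactness sketch: the claim that $\lambda\langle\widetilde{\nabla}(c_n-c),\widetilde{\nabla}c_n\rangle\to 0$ "since $\widetilde{\nabla}c_n\rightharpoonup\widetilde{\nabla}c$" is not valid as stated (a weakly convergent sequence paired with itself does not pass to the limit); the correct bookkeeping is to use the weak formulation to replace the entire bracket $\int_{Q}\widetilde{\div}(\widetilde{u}c_n)\widetilde{\div}(\widetilde{u}(c_n-c))\,\mathrm{d}x\mathrm{d}t+\lambda\int_{Q}\widetilde{\nabla}c_n\cdot\widetilde{\nabla}(c_n-c)\,\mathrm{d}x\mathrm{d}t$ by $\int_{Q}f(\rho_n)\widetilde{\div}(\widetilde{u}(c_n-c))\,\mathrm{d}x\mathrm{d}t\to 0$, and to kill the remaining cross terms against the fixed limit $c$ by weak convergence. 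With that adjustment the argument closes.
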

\begin{proof}\\
\textbf{A-Existence:}\\
The proof is relied mainly on the Schauder's fixed theorem.\\
Step 1: we first have to choose  a bounded subset  $\mathbb{X}$ of $\mathbb{V}$ and   a mapping  $ T :  \mathbb{X} \rightarrow \mathbb{X} $.
To achieve this aims, for all $ \rho \in V $ , since $ f \in W^{1,\infty }(Q) $,
$ f(\rho) \in L^{2}(Q) $, then by  Proposition \ref{th} there exists a function $ c_{\lambda} \in \mathbb{V} $ such that

 $$ \int_{Q}\widetilde{\div}(\widetilde{u}c_{\lambda})\widetilde{\div}(\widetilde{u}\phi)\mathrm{d}x\mathrm{d}t 
+\int_{Q}\widetilde{\nabla}c_{\lambda}\widetilde{\nabla}  \phi \mathrm{d}x\mathrm{d}t=\int_{Q}f(\rho)\widetilde{\div}(\widetilde{u}\phi)\mathrm{d}x\mathrm{d}t  \,\, \text{for all } \, \phi \in \mathbb{V}.$$
Moreover $ \mid c_{\lambda}\mid_{\mathbb{V}} \leq \Vert f(\rho) \Vert_{L^{2}(Q)}.$\\
Since  $ f \in W^{1,\infty} (\mathbb{R})$, we have $  \mid c_{\lambda} \mid_{\mathbb{V}} \leq  \mid f \mid_{L^{\infty}}\mid Q \mid^{\frac{1}{2}}.$

Let us define,
$ T :  \mathbb{V} \rightarrow \mathbb{V} $ such that
$ c_{\lambda}=T(\rho) $.\\
Solving (\ref{stilpnnl}) is equivalent  to show the existence of fixed point theorem of T.\\
Let us proceed further and choose a convex set $ \mathbb{X} $ as follows :
$$ \phi \in \mathbb{V},\mid \phi \mid_{\mathbb{V}} \leq M    $$
when $M$ is to be precised later. 
And  $$ \mid T\rho \mid_{\mathbb{V}}=  \mid c_{\lambda} \mid_{\mathbb{V}} \leq  \mid f \mid_{L^{\infty}}\mid Q \mid^{\frac{1}{2}} ,\, \text{for all } \rho \in \mathbb{X} $$ 

Thus, choosing $ M=  \mid f \mid_{L^{\infty}}\mid Q \mid^{\frac{1}{2}}  $, the following inclusion yields 
$$ T(\mathbb{V}) \subset \mathbb{X} $$ 
and then  
$$ T(\mathbb{\mathbb{X}}) \subset \mathbb{X}.$$ 
So we will consider $ T :  \mathbb{X} \rightarrow \mathbb{X} $

Step 2: T is  continuous for all $ \lambda \geq 1.$
\begin{proof} of the step 2:
T can be written as composition of following application 
$$ L^{2}(Q)\longrightarrow L^{2}(Q) \longrightarrow \mathbb{V} \hookrightarrow L^{2}(Q) $$
$$\rho  \longmapsto \widetilde{f}(\rho)=f\circ \rho \longmapsto T(\rho) \hookrightarrow T(\rho).$$
By Caratheodory theorem $\rho  \longmapsto \widetilde{f}(\rho)=f\circ \rho $  is continuous from $ L^{2}(Q) $ into $ L^{2}(Q) $. And Lax-Milgram's  lemma gives the continuity of $f\circ \rho \longmapsto T(\rho)$ from  $ L^{2}(Q) $ into $\mathbb{V} $.
Using the curved inequality(\ref{constp}),
it is easy to see  that the injection 
 $ \rho \in V \longmapsto \rho \in L^{2}(Q)  $
 is also  continuous.

Then T is continuous

 \end{proof}

Steep 3: $\mathbb{X}$ is a subset convex and compact in $L^{2}(Q)$ .\\
\begin{proof} of step 3 :

$$\Vert \phi \Vert_{H^{1}(Q)}^{2}=  \Vert \phi \Vert_{ {L^{2}(Q)}}^{2}+ \Vert \widetilde{\nabla}\phi \Vert_{ {L^{2}(Q)}}^{2}  \,\,\,\,\,\,\, \forall   \phi \in H^{1}(Q) $$
By the inequality  (\ref{constp}), we have 

$$\Vert \phi \Vert_{H^{1}(Q)}^{2} \leq  (1+c_{p}^{2})(\Vert \widetilde{\div}(\widetilde{u}\phi) \Vert_{ {L^{2}(Q)}}^{2}   + \Vert \widetilde{\nabla}\phi \Vert_{ {L^{2}(Q)}}^{2})= (1+c_{p}^{2}) \mid \phi \mid_{\mathbb{V}}  \,\,\,\,\,\,\, \forall \phi \in \mathbb{V}.$$
Then $ \mathbb{X} $ which is bounded in $ \mathbb{V} $  is bounded in $H^{1}(Q). $ 
And by Rellich's  theorem ,we know that $H^{1}(Q) \subset L^{2}(Q)$ with compact injection so 
 $ \mathbb{X} $ is relatively compact in $ L^{2}(Q). $\\
Moreover $ \mathbb{X} $ is closed in $ L^{2}(Q).$\\
In fact  let $ x_{n} $ be sequence in $ \mathbb{X} $ with $ x_{n}\longrightarrow x \in L^{2}(Q) $ ,then $ x_{n} $ is bounded in $ \mathbb{V} $ which is a  reflexive Banach space then there is a subsequence $ x_{nk} $ that converges in the weak topology $ \sigma(\mathbb{V},\mathbb{V}^{*}) $ to $ x^{*}\in \mathbb{V}.$\\
$ \mathbb{X} $ is convex closed in the strong topology then $ \mathbb{X} $ is convex closed in the weak  topology (see \cite{HB},Theorem 3.2) , so we have $ x^{*}\in \mathbb{X} $.

And from Mazur's theorem ,there are convex combination  of  $ x_{nk} $ ,themselves elements of 
$ \mathbb{X} $ which converge strongly towards $ x^{*}\in \mathbb{X} $.\\
But these same convex combinations converge towards $ x \in \mathbb{X} $ in $ L^{2}(Q) $ .
By uniqueness  of the limit in $ L^{2}(Q) $ ,we have $ x=x^{*} $ .

Furthermore,  $$\mid v  \mid_{\mathbb{V}} \leq \liminf \mid x_{nk}  \mid_{\mathbb{V}} \leq 	M  \, \text{a.e} \,  x \in\mathbb{X}; $$
therefore $ \mathbb{X} $ is closed  in $ L^{2}(Q) $.\\
Since  $ \mathbb{X} $ is relatively compact and closed  in $ L^{2}(Q) $ then it  is compact in 
$ L^{2}(Q) $.
\end{proof}

Since $ \mathbb{X} $ is  convex, compact in $ L^{2}(Q) $ and      $ T :  \mathbb{X} \rightarrow \mathbb{X} $ continuous, from Schauder's fixed point theorem T has a fixed point.\\

\textbf{B-Uniqueness:} \\
Let $ \rho_{\lambda} $ and $  \overline{\rho_{\lambda}}  $ be two solutions of \ref{stilpnnl} ,we have 

$$ \int_{Q}\mid\widetilde{\div}(\widetilde{u}(\rho_{\lambda} -\overline{\rho_{\lambda}}))\mid^{2}\mathrm{d}x\mathrm{d}t +\lambda\int_{Q}\mid \widetilde{\nabla}(\rho_{\lambda} -\overline{\rho_{\lambda}})\mid^{2} \mathrm{d}x\mathrm{d}t=\int_{Q}(f(\rho_{\lambda})-f(\overline{\rho_{\lambda}}))\widetilde{\div}(\widetilde{u}(\rho_{\lambda} -\overline{\rho_{\lambda}}))\mathrm{d}x\mathrm{d}t.$$
By Young's inequality, we have
$$2\Vert\widetilde{\div}(\widetilde{u}(\rho_{\lambda} -\overline{\rho_{\lambda}})) \Vert_{ {L^{2}(Q)}}^{2}+2\lambda  \Vert \widetilde{\nabla}(\rho_{\lambda} -\overline{\rho_{\lambda}}) \Vert _{ {L^{2}(Q)}}^{2} \leq \Vert (f(\rho_{\lambda})-f(\overline{\rho_{\lambda}})) \Vert _{ {L^{2}(Q)}}^{2}+\Vert \widetilde{\div}(\widetilde{u}(\rho_{\lambda} -\overline{\rho_{\lambda}}))\Vert _{ {L^{2}(Q)}}^{2}  $$

Since $ f \in W^{1,\infty} (\mathbb{R}) $ we have  
$$ \Vert f(\rho_{\lambda})-f(\overline{\rho_{\lambda}}) \Vert _{ {L^{2}(Q)}} \leq \mid f'\mid_{L^{\infty}(\mathbb{R})}\Vert \rho_{\lambda}-\overline{\rho_{\lambda}} \Vert _{ {L^{2}(Q)}} $$
and it follows that

$$2\Vert\widetilde{\div}(\widetilde{u}(\rho_{\lambda} -\overline{\rho_{\lambda}})) \Vert_{ {L^{2}(Q)}}^{2}+2\lambda  \Vert \widetilde{\nabla}(\rho_{\lambda} -\overline{\rho_{\lambda}}) \Vert _{ {L^{2}(Q)}}^{2} \leq  \mid f'\mid_{L^{\infty}(\mathbb{R})}^{2}\Vert \rho_{\lambda}-\overline{\rho_{\lambda}} \Vert _{ {L^{2}(Q)}}^{2}  +\Vert \widetilde{\div}(\widetilde{u}(\rho_{\lambda} -\overline{\rho_{\lambda}}))\Vert _{ {L^{2}(Q)}}^{2}.$$

Since $div(u)=0$ , remark \ref{rq1} yields  $$ \Vert \rho_{\lambda}-\overline{\rho_{\lambda}} \Vert _{ {L^{2}(Q)}}^{2} \leq 4T^{2} \Vert  (\widetilde{u}, \widetilde{\nabla}( \rho_{\lambda}-\overline{\rho_{\lambda}}))  \Vert _{ {L^{2}(Q)}}^{2}.$$
And then, we have 

$$2\Vert\widetilde{\div}(\widetilde{u}(\rho_{\lambda} -\overline{\rho_{\lambda}})) \Vert_{ {L^{2}(Q)}}^{2}+2\lambda  \Vert \widetilde{\nabla}(\rho_{\lambda} -\overline{\rho_{\lambda}}) \Vert _{ {L^{2}(Q)}}^{2} \leq 4T^{2} \mid f'\mid_{L^{\infty}(\mathbb{R})}^{2}\Vert  (\widetilde{u}, \widetilde{\nabla}( \rho_{\lambda}-\overline{\rho_{\lambda}}))  \Vert _{ {L^{2}(Q)}}^{2}  +\Vert \widetilde{\div}(\widetilde{u}(\rho_{\lambda} -\overline{\rho_{\lambda}}))\Vert _{ {L^{2}(Q)}}^{2}.$$

By using Cauchy-Schwarz's inequality, we have 
$$2\Vert\widetilde{\div}(\widetilde{u}(\rho_{\lambda} -\overline{\rho_{\lambda}})) \Vert_{ {L^{2}(Q)}}^{2}+2\lambda  \Vert \widetilde{\nabla}(\rho_{\lambda} -\overline{\rho_{\lambda}}) \Vert _{ {L^{2}(Q)}}^{2} \leq 4T^{2} \mid f'\mid_{L^{\infty}(\mathbb{R})}^{2}  \Vert\widetilde{u}  \Vert _{ {L^{2}(Q)}}^{2}  \Vert  \widetilde{\nabla}( \rho_{\lambda}-\overline{\rho_{\lambda}}) \Vert _{ {L^{2}(Q)}}^{2}  +\Vert \widetilde{\div}(\widetilde{u}(\rho_{\lambda} -\overline{\rho_{\lambda}}))\Vert _{ {L^{2}(Q)}}^{2}  $$
$$\Vert\widetilde{\div}(\widetilde{u}(\rho_{\lambda} -\overline{\rho_{\lambda}})) \Vert_{ {L^{2}(Q)}}^{2}+(2\lambda-4T^{2} \mid f'\mid_{L^{\infty}(\mathbb{R})}^{2}\Vert\widetilde{u}  \Vert _{ {L^{2}(Q)}}^{2} ) \Vert \widetilde{\nabla}(\rho_{\lambda} -\overline{\rho_{\lambda}}) \Vert _{ {L^{2}(Q)}}^{2} \leq   0.$$

Thus   $ \rho_{\lambda}=\overline{\rho_{\lambda}} $  provided  that  $ \lambda > 2T^{2} \mid f'\mid_{L^{\infty}(\mathbb{R})}^{2}\Vert\widetilde{u}  \Vert _{ {L^{2}(Q)}}^{2}.$


\end{proof}

\section{Numerical  study and simulations}
In this section, two numerical methods are presented for computing the solution of  semi linear conservation law  problem  (\ref{eq:stilln}). The first consist in using  Picard's  iteration or Newton-Adaptive for the  linearization of the semi linear problem.  These linearized problems are discretized by using discontinuous Galerkin's method of  the   STILS  formulation
(\ref{eq:stils})  and continuous finite element method for the penalization version  (\ref{stilpn}). Moreover, a posteriori error bounds are established when Newton iteration is used.
\\
In the sequel we shall  assume that the function $f$ is $k-$ Lipschitz then by  Rademacher's theorem (see \cite{rdm} for more details)  f is differentiable almost everywhere.  
\subsection{A  finite element method for semi linear conservations laws }
Let us assume that the problem (\ref{eq:stilsn})-(\ref{CD}) admits a unique solution $ c \in  H^{k+1}(Q)\cap H(u,Q) $.
In order to provide numerical approximation for computing the  solution of (\ref{eq:stilsn})-(\ref{CD}) after linearization,  we shall use a simple finite element approximation wich can be derived  from the use of  discontinuous Galerkin's  approximations of the space time least squares formulation. This method is introduced in \cite{Mu} for linear hyperbolic problem and \cite{MuP} for Poisson problem.\\
Let $ \mathcal{T}_{h} $ be a  regular   partition  of the domain $ Q $ more precisely a triangulation in which each element is a  polygon ( respectively  a polyhedra ) in two dimensions ( respectively in three dimensions ) . For $ k\geq 1 $, we consider the discontinuous  finite element space (see \cite{Mu} ) 
\begin{equation}
\mathcal{V}_{h}=\big\lbrace \phi \in L^{2}(Q),  \phi\mid T \in Q_{k}(T) \,\,\forall T\in \mathcal{T}_{h} \rbrace 
\end{equation}
where $ Q_{k}(T)$ is the space of linear polynomials of degree k in each variable  on $T$
and 
\begin{equation}
\mathcal{V}=\big\lbrace \phi \in L^{2}(Q),  \phi\mid T \in  H^{k+1}(T)\cap H(u,T) \,\,\forall T\in \mathcal{T}_{h} \rbrace. 
\end{equation}
It is easy to remark that $ \mathcal{V} $ contains $ \mathcal{V}_{h} $ and  $ H^{k+1}(Q)\cap H(u,Q) $.  Let   $\mathcal{E}_{h}$ be the set of all edges for  $d=1$ or flat face for $d=2$ and  $\mathcal{E}_{h}^{0}=\mathcal{E}_{h}\backslash \partial Q_{-} $ 
For $ T\in \mathcal{T}_{h} $, let  us denote by  $ h_{K} $ the diameter of $K$ and $\rho_{K}$ the supremum  of the diameters of the inscribed spheres of $K$, $h=\max h_{T}  $ the mesh size of $  \mathcal{T}_{h}$. Let us suppose that  $\mathcal{T}_{h}  $ is shape regular and also   there exists two non negative  constant $ C_{(\ref{reg})}^{(1)} $ and $ C_{(\ref{reg})}^{(2)} $  such that
\begin{equation}\label{reg}
C_{(\ref{reg})}^{(1)} \leq \frac{h_{T}}{h_{e}}\leq C_{(\ref{reg})}^{(2)} \,\, \forall \,\, T \in \mathcal{T}_{h} \,\, \forall \,\, e \subset T. 
\end{equation}
Moreover for  $T \in \mathcal{T}_{h}$, we introduce the following notations
$$\mathcal{E}_{h}(T)=\big\lbrace E \in \mathcal{E}_{h} \, ; \, E \subset \partial T \big\rbrace.  $$
For $ \phi \in \mathcal{V}_{h} $ and  $	e \in \mathcal{E}_{h}$ with $ e=\partial T _{1} \cap \partial T_{2} $, $ T_{1},T_{2} \in  \mathcal{T}_{h} $, let we define $ [\phi]  $  the jump of $\phi$ across  $e \in \mathcal{E}_{h}^{0}$ as following 
$$ [\phi] = \phi \mid_{\partial T_{1}} \widetilde{n_{1}} +\phi\mid_{\partial T_{1}}\widetilde{n_{2}} $$ and also 
$$ [ (\widetilde{u} ,\widetilde{n})\phi ] = (\widetilde{u} ,\widetilde{n_{1}})\phi\mid_{\partial T_{1}}+(\widetilde{u} ,\widetilde{n_{2}})\phi\mid_{\partial T_{2}} $$  
where $\widetilde{n_{1}}$  and $\widetilde{n_{2}}$ denote the unit  outward vectors on $\partial T_{1}$ and $\partial T_{2}$ respectively.
For $ e \in \partial Q_{-}$, $[\phi] = \phi $ and $ [ (\widetilde{u} ,\widetilde{n})\phi ] = (\widetilde{u} ,\widetilde{n})\phi.$

By considering the following bilinear form in $\mathcal{V} \times \mathcal{V} $
\begin{equation}
\mathcal{A}(c,\phi)= \sum \limits_{T \in \mathcal{T}_{h} }  \int_{T}\widetilde{\div}(\widetilde{u}c)\widetilde{\div}(\widetilde{u}\phi)\mathrm{d}x\mathrm{d}t+\sum \limits_{e \in \mathcal{E}_{h}^{0} }  \int_{e} h_{e}^{-1}  [(\widetilde{u} ,\widetilde{n}) c ][(\widetilde{u}, \widetilde{n}) \phi ] \mathrm{d}s.
\end{equation}
Since $c\in \mathcal{V} $ then 
\begin{equation}\label{sfem1}
\mathcal{A}(c,\phi)= \sum \limits_{T \in \mathcal{T}_{h} }  \int_{T}f(c) \widetilde{\div}(\widetilde{u}\phi)\mathrm{d}x\mathrm{d}t+
\sum \limits_{e \in \partial Q_{-} }  \int_{e} h_{e}^{-1}[(\widetilde{u} ,\widetilde{n}) c_{b} ][(\widetilde{u}, \widetilde{n}) \phi ] \mathrm{d}s \,\, \forall \phi \in \mathcal{V}_{h}.
\end{equation}
The corresponding  approximation of (\ref{sfem1}) is called in  (\cite{Mu})  simple finite element methods.
It is easy to see that the bilinear form  
$$ \|\phi\|_{DG}^{2}= \mathcal{A}(\phi,\phi)+ |\phi|_{\mathcal{T}_{h}, k+1}  $$ defines a norm in $\mathcal{V}$. Moreover, we have
where 
\begin{equation}
|\rho|_{\mathcal{T}_{h}, k+1}= \sum \limits_{T \in \mathcal{T}_{h} } | \rho|_{k+1,T}^{2}
\end{equation}
\begin{equation}\label{dji}
\mathcal{A}(\phi,\psi)\leq \|\phi\|_{DG} \|\psi\|_{DG} \forall \,\, \psi \,\, ,\phi \in \mathcal{V}.
\end{equation}
As in \cite{{ANRM}}, we shall use the following  abbreviation $ x\preceq y $ for signifying $ x \leq Cy$ for some constant $ C>0 $ independent to the mesh size $ h $ and $ \lambda $. 
Let $ P_{h} $ be the $ L^{2} $ projection onto  $\mathcal{V}_{h}$ , we have the following results see \cite{RIV} for more details.\\
There exists a constant $ C_{(\ref{qin})}>0 $ such that for all $ \rho \in \mathcal{V} $
\begin{equation}\label{qin}
\|\widetilde{\nabla}(\rho-P_{h}\rho)\|_{0,T} \leq C_{(\ref{qin})} h^{k} |\rho|_{k+1,T}
\end{equation} 
for all $ T \in \mathcal{T}_{h} $
and 
\begin{equation}\label{qim}
\|\rho-P_{h}\rho\|_{0,T} \leq  C_{(\ref{qin})} h^{k+1} |\rho|_{k+1,T}.
\end{equation} 
It is also proved in \cite{tracei} that , there exists a constant $C_{(\ref{tracei})}$ independent to the mesh size $ h $ such that for any $ T\in \mathcal{T}_{h}  $ and $ e \subset \partial T  $, we have 
\begin{equation}\label{tracei}
\|\rho\|_{e}^{2} \leq C_{(\ref{tracei})}( h^{-1}\|\rho\|_{T}^{2}+h\|\widetilde{\nabla} \rho\|_{T}^{2})
\end{equation} 
Finally we  deduce that the following approximation lemma.
\begin{lemma}\label{lamDG}
For all $ \rho \in  \mathcal{V} $ 
\begin{equation}
\|\rho-P_{h}\rho\|_{DG} \preceq h^{k} \|\rho\|_{\mathcal{T}_{h}, k+1} \,\, \forall \,\, T\in \mathcal{T}_{h}  
\end{equation}
\end{lemma}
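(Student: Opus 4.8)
The strategy is to estimate each of the two contributions to $\|\rho - P_h\rho\|_{DG}^2 = \mathcal{A}(\rho - P_h\rho, \rho - P_h\rho) + |\rho - P_h\rho|_{\mathcal{T}_h, k+1}$ separately, using the standard polynomial approximation estimates (\ref{qin})--(\ref{qim}) together with the discrete trace inequality (\ref{tracei}). First I would unfold the definition of $\mathcal{A}$ applied to $\rho - P_h\rho$: it splits into the volume term $\sum_{T} \|\widetilde{\div}(\widetilde{u}(\rho - P_h\rho))\|_{0,T}^2$ and the jump term $\sum_{e \in \mathcal{E}_h^0} h_e^{-1}\|[(\widetilde{u},\widetilde{n})(\rho - P_h\rho)]\|_{0,e}^2$. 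Since $\widetilde{u} \in L^\infty$, the volume term is controlled by $\|\widetilde{u}\|_\infty^2 \sum_T \|\widetilde{\nabla}(\rho - P_h\rho)\|_{0,T}^2$ plus a lower-order piece involving $\widetilde{\div}(\widetilde{u})$ acting on $\rho - P_h\rho$; applying (\ref{qin}) to the gradient part gives $\sum_T h^{2k}|\rho|_{k+1,T}^2 = h^{2k}\|\rho\|_{\mathcal{T}_h,k+1}^2$, and (\ref{qim}) handles the lower-order part with an even better power of $h$.

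For the jump term, on each interior edge $e = \partial T_1 \cap \partial T_2$ I would bound $\|[(\widetilde{u},\widetilde{n})(\rho - P_h\rho)]\|_{0,e}$ by $\|\widetilde{u}\|_\infty$ times the sum of the one-sided traces $\|(\rho - P_h\rho)|_{\partial T_i}\|_{0,e}$, $i=1,2$. (Here one uses that $\rho \in \mathcal{V}$ has a well-defined trace on each element boundary, so $[\rho]$ makes sense; note that $\rho$ need not be continuous across $e$, but that is precisely why the DG norm includes jumps.) Then apply the trace inequality (\ref{tracei}) on each $T_i$: $\|\rho - P_h\rho\|_{e}^2 \preceq h^{-1}\|\rho - P_h\rho\|_{T_i}^2 + h\|\widetilde{\nabla}(\rho - P_h\rho)\|_{T_i}^2$. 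Multiplying by $h_e^{-1} \sim h^{-1}$ (using the shape-regularity bound (\ref{reg})), and inserting (\ref{qin})--(\ref{qim}), each edge contributes $\preceq h^{-2}(h^{2k+2} + h^{2k+2})|\rho|_{k+1,T_i}^2 \preceq h^{2k}|\rho|_{k+1,T_i}^2$. Summing over all edges, and observing that each element is counted a bounded number of times (fixed by the mesh topology), yields $\preceq h^{2k}\|\rho\|_{\mathcal{T}_h,k+1}^2$. The seminorm term $|\rho - P_h\rho|_{\mathcal{T}_h,k+1} = \sum_T |\rho - P_h\rho|_{k+1,T}^2$ is estimated by the triangle inequality $|\rho - P_h\rho|_{k+1,T} \le |\rho|_{k+1,T} + |P_h\rho|_{k+1,T}$ together with an inverse estimate controlling $|P_h\rho|_{k+1,T}$ by lower-order norms of $P_h\rho$ (which on a fixed polynomial space $Q_k(T)$ one can bound back against $|\rho|_{k+1,T}$ via stability of $P_h$ and the approximation bounds); alternatively, if $k+1$ exceeds the polynomial degree one simply notes $|P_h\rho|_{k+1,T}$ is controlled since $P_h\rho$ is a polynomial. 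Collecting the three pieces and taking square roots gives the claimed bound.

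The main obstacle is the jump term: one must carefully track the power of $h$ coming from $h_e^{-1}$ against the two powers gained from (\ref{qim}), and be scrupulous that the mesh-regularity hypothesis (\ref{reg}) is what makes $h_e$ and $h_T$ comparable so that the scaling closes. A secondary subtlety is ensuring the finite-overlap counting when summing edge contributions back to element contributions — this is where shape-regularity of $\mathcal{T}_h$ enters a second time. Everything else is a routine assembly of the quoted estimates.
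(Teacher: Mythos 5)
Your proposal follows the paper's proof essentially verbatim: you split $\mathcal{A}(\rho-P_h\rho,\rho-P_h\rho)$ into the volume term (handled via the product rule for $\widetilde{\div}(\widetilde{u}\,\cdot)$, the $L^\infty$ bounds on $\widetilde{u}$ and $\div(\widetilde{u})$, and the approximation estimates (\ref{qin})--(\ref{qim})) and the jump term (handled via $(\widetilde{u},\widetilde{n})\in L^\infty(\partial T)$, the trace inequality (\ref{tracei}), shape regularity (\ref{reg}), and again (\ref{qin})--(\ref{qim})), exactly as in the paper. The one point of divergence is that you also attempt to bound the $|\rho-P_h\rho|_{\mathcal{T}_h,k+1}$ contribution to $\|\cdot\|_{DG}$, which the paper's own proof silently omits; note that your triangle-inequality/inverse-estimate sketch for that piece cannot yield the factor $h^{k}$ (the $(k+1)$-seminorm of the interpolation error does not decay in $h$), so the estimate being proved is really on $\mathcal{A}(\rho-P_h\rho,\rho-P_h\rho)^{1/2}$ alone.
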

\begin{proof}
$$\|\rho-P_{h}\rho\|_{DG}^{2}= \sum \limits_{T \in \mathcal{T}_{h} }  \int_{T}\widetilde{\div}(\widetilde{u}(\rho-P_{h}\rho))^{2}\mathrm{d}x\mathrm{d}t+\sum \limits_{e \in \mathcal{E}_{h} }  \int_{e} h_{e}^{-1}  \|[(\widetilde{u} ,\widetilde{n}) (\rho-P_{h}\rho)]\|^{2} \mathrm{d}s  $$
By theorem (\ref{trace21}),  $ (\widetilde{u},\widetilde{n}) \in L^{\infty}( \partial T) $, then it follows from (\ref{tracei}) and (\ref{reg})
\begin{equation}\label{one}
\int_{e} h_{e}^{-1}  \|[(\widetilde{u} ,\widetilde{n}) (\rho-P_{h}\rho)]\|^{2} \mathrm{d}s \leq C_{(\ref{reg})}^{(2)} h^{-1} \|(\widetilde{u},\widetilde{n})\|_{L^{\infty}(e)}  \int_{e}\|[ (\rho-P_{h}\rho)]\|^{2} \mathrm{d}s
\end{equation} 
This and  (\ref{tracei}) yield

\begin{equation}\label{one1}
\int_{e} \|[(\widetilde{u} ,\widetilde{n}) (\rho-P_{h}\rho)]\|^{2} \mathrm{d}s \leq 4 C_{(\ref{reg})}^{(2)}  \|(\widetilde{u},\widetilde{n})\|_{L^{\infty}(e)}  C_{(\ref{tracei})}( h^{-2}\|\rho-P_{h}\rho\|_{T}^{2}+\|\widetilde{\nabla} (\rho-P_{h}\rho)\|_{T}^{2})
\end{equation} 
And from (\ref{qin})and(\ref{qim}), it follows  
\begin{equation}\label{one2}
\int_{e} \|[(\widetilde{u} ,\widetilde{n}) (\rho-P_{h}\rho)]\|^{2} \mathrm{d}s \leq c_{(\ref{one2})}    h^{2k} |\rho|_{k+1,T}^{2}
\end{equation} 
where 
\begin{equation}
c_{(\ref{one2})}=C_{(\ref{tracei})}C_{(\ref{qin})}C_{(\ref{reg})}^{(2)}\|(\widetilde{u},\widetilde{n})\|_{L^{\infty}(e)}.
\end{equation}

We have also from triangular inequality 
\begin{equation}
\|\widetilde{\div}(\rho-P_{h}\rho)\|_{T}  \leq  \|(\widetilde{u}\widetilde{\nabla} (\rho-P_{h}\rho))\|_{T}+\|\\div( \widetilde{u})(\rho-P_{h}\rho)\|_{T}.
\end{equation}
Since $ \widetilde{u} \in L^{\infty}(T)$ and $ \div(\widetilde{u}) \in L^{\infty}(T)$, we get from (\ref{qin})-(\ref{qim})
\begin{equation}\label{two}
\|\widetilde{\div}(\rho-P_{h}\rho)\|_{T}  \leq C_{(\ref{qin})} \alpha_{(u,\ref{tw})} ( h^{k} |\rho|_{k+1,T})
\end{equation}
where 
\begin{equation}\label{tw}
\alpha_{(u,\ref{tw})}=\max{\{\| \div(\widetilde{u}) \|_{L^{\infty}(T)},|Q|\| \widetilde{u} \|_{L^{\infty}(T)}\}}.
\end{equation}

From (\ref{one2}) and (\ref{two}), we get the result.

\end{proof}

\subsubsection{A finite element method and Picard's iteration }
Let  $f$ be  a $k-$ Lipschitz function  in $ \mathbb{R} $ with $ k<\frac{1}{c_{p}} .$ In this case the  solution $c^{h}$  can be computed by using the Picard iteration of some  linear problem. The Picard iteration in this context is given by following scheme:\\
\begin{algo}
\begin{enumerate}
\end{enumerate}

\begin{itemize}
\item Start STILS-MT1 with some given $ C^{0}$
\item compute  $c^{h}_{n+1}$ from  $c^{h}_{n}$ such that 
\begin{equation}\label{sfem}
\mathcal{A}(c^{h}_{n+1},\phi_{h})= \sum \limits_{T \in \mathcal{T}_{h} }  \int_{T}f(c^{h}_{n}) \widetilde{\div}(\widetilde{u}\phi_{h})\mathrm{d}x\mathrm{d}t+
\sum \limits_{e \in \partial Q_{-} }  \int_{e} h_{e}^{-1}[(\widetilde{u} ,\widetilde{n}) c_{b} ][(\widetilde{u}, \widetilde{n}) \phi_{h} ] \mathrm{d}s \,\, \forall \phi_{h} \in \mathcal{V}_{h}.
\end{equation}
\end{itemize}
\end{algo}

\subsubsection{A finite element method and Newton's method}
We suppose that the problem (\ref{eq:stilsn})-(\ref{CD}) has  a unique solution $ \mathcal{V}=H^{2}(Q)\cap H(u,Q) $.Recalling (\ref{sfem}), we can write (\ref{eq:stilsn})-(\ref{CD}) as follows:
\begin{equation}\label{NLE}
\text{find} \,\, c \in \mathcal{V} \,\,\text{ such \,\, that} \,\,F(c)=0
\end{equation} 

where 
$$ F:\mathcal{V} \longrightarrow \mathcal{V}^{*}  $$
\begin{equation}\label{stilpnnl2}
\langle F(c),\phi \rangle_{\mathcal{V}^{*},\mathcal{V}}=\mathcal{A}(c,\phi)- \sum \limits_{T \in \mathcal{T}_{h} }  \int_{T}f(c) \widetilde{\div}(\widetilde{u}\phi)\mathrm{d}x\mathrm{d}t-
\sum \limits_{e \in \partial Q_{-} }  \int_{e} h_{e}^{-1}[(\widetilde{u} ,\widetilde{n}) c_{b} ][(\widetilde{u}, \widetilde{n}) \phi ] \mathrm{d}s \,\, \forall \phi \in \mathcal{V}.
\end{equation}
Given some initial guess $ c^{0},$  the classical Newton-Raphson's method for solving equation (\ref{NLE}), when $ F $ is differentiable, consists in generating a sequence of approximation that converges  in the quadratic sense, to the exact  solution as follows.
\begin{equation}\label{NW}
\left \{ \begin{array}{l} 
 c_{0} \in \mathbb{V} \\
c_{n+1}=c_{n}-F^{'}(c_{n})^{-1}.F(c_{n})  \,\,  \forall n\in \mathbb{N}^{*}. \\
\end{array}
\right .
\end{equation}
This method is known to produce a chaotic behavior when  $ c_{0} $ is far to the desired root see for instance 
 (see\cite{NBH}) for more details . In order to remedy the chaotic behavior the following Newton Damping method is proposed (see\cite{DAMPING}). In that case (\ref{NW}) is written as    

\begin{equation}\label{DAMP}
\left \{ \begin{array}{l} 
 C_{0} \in \mathbb{V} \\
c_{n+1}=c_{n}-\delta t F^{'}(c_{n})^{-1}.F(c_{n}) \,\,  \forall n\in \mathbb{N}^{*}. \\
\end{array}
\right .
\end{equation}
 
We shall use adaptive Newton-Galerkin's method, more precisely the damping parameter $ \delta t $  in (\ref{DAMP}) may be adjusted and  adapted in each iteration. For illustration of the choice of $ \delta t $,  let us define the Newton-Raphson's transform  as follows: 

$$\rho \longmapsto N_{F}(\rho):=-F^{'}(\rho)^{-1}.F(\rho).$$
By (\ref{DAMP}), we have 
$$\frac{c_{n+1}-c_{n}}{\delta _{n}}=N_{F}(c_{n}).  $$
And we remark that (\ref{DAMP}) may be seen as a forward Euler scheme of the following ordinary differential equation  
\begin{equation}\label{ODE}
\frac{\mathrm{d}}{\mathrm{d}s}\rho(s)=N_{F}(\rho(s))\,\,   \forall s \,\, \rho(0)=c_{0}.
\end{equation}
If $c_{n} \in \mathcal{V} $ for all $n \geq 1$ and $F$ is enough smooth for instance $ F^{'}(\rho)^{-1}.F(\rho)  $ exists for all $ \rho \in \mathcal{V}$ then, we obtain the solution of (\ref{ODE}) satisfies 
$$F(\rho(t))=F(\rho(0))\exp(-t), \,\, \forall \,\, t\geq 0.   $$ 
It is easy to see that,  $ F(\rho(t))\longrightarrow 0 $ as $ t\longrightarrow 0.$ \\
The adaptive Newton-Raphson (see \cite{NF}  )consists in choosing the damping parameter $ \delta t_{n} $ so that so that
the discrete forward Euler's solution fof  (\ref{DAMP}) stays reasonably close to the continuous solution of (\ref{ODE}).Finally we obtain the following algorithm, see \cite{ANRM}

\begin{algo}
Fix a tolerance $ \epsilon$

\begin{itemize}
\item[(i)] Start the Newton iteration with some initial guess $ c_{0} \in \mathbb{V} $
\item[(ii)] In each iteration steep $ n=1,2,... $ compute 
\begin{equation}
\delta t_{n}=\min(\sqrt{\frac{2\epsilon}{\Vert N_{F}(c_{n})\Vert _{\mathbb{V}}}},1  )
\end{equation}
\item[(iii)] compute $ c_{n+1} $ from \ref{DAMP} and go (ii)
\end{itemize}
\end{algo}

In the sequel, we suppose that $ f^{'}(c_{n}) $ exists for all $ n\geq 1, $ thus the sequels in \ref{DAMP} is well defined and we have 

\begin{equation}
\beta(c,\rho,\phi)=:\langle F^{'}(c)\rho,\phi \rangle_{\mathcal{V}^{*},\mathcal{V}}=\mathcal{A}(\rho,\phi)- \sum \limits_{T \in \mathcal{T}_{h} }  \int_{T}f^{'}(c)\rho (\widetilde{u}\phi)\mathrm{d}x\mathrm{d}t  \,\, \text{for all } \, \phi \in \mathcal{V}.
\end{equation}

Let us define 

$$ L(c,\phi):=\langle F(c),\phi \rangle_{\mathcal{V}^{*},\mathcal{V}} $$
with the  previous notation (\ref{DAMP}) can be written as follows :
given $c_{n} \in \mathcal{V}$, find $c_{n+1} \in \mathcal{V}$ such that
\begin{equation}\label{NEW}
\beta(c_{n},c_{n+1},\phi)=\beta(c_{n},c_{n},\phi)-\delta t_{n}L(c_{n},\phi) \,\, \text{for all } \, \phi \in \mathcal{V}.
\end{equation}
Let us know consider the following  finite element approximation  :
find $ c_{n+1}^{h} \in \mathcal{V} $ from $ c_{n}^{h} \in \mathcal{V}_{h} $  such that 
\begin{equation}\label{NEWFEM}
\beta(c_{n}^{h},c_{n+1}^{h},\phi)=\beta(c_{n}^{h},c_{n}^{h},\phi)-\delta t_{n}L(c_{n}^{h},\phi) \,\, \text{for all } \, \phi \in \mathcal{V}_{h}.
\end{equation}

 By introducing the following notation 
\begin{equation}
c_{n+1}^{(\delta t_{n},h)}:=c_{n+1}^{h}-(1-\delta t_{n})c_{n}^{h}
\end{equation}
and 
\begin{equation}
f^{\delta t_{n}}(c_{n+1}^{h}):=\delta t_{n}f(c_{n}^{h})+f^{'}(c_{n}^{h})(c_{n+1}^{h}-c_{n}^{h} )
\end{equation}
we have, from (\ref{NEW})

\begin{eqnarray*}
\sum \limits_{T \in \mathcal{T}_{h} }  \int_{T}\widetilde{\div}(\widetilde{u}c_{n+1}^{(\delta t_{n},h)})\widetilde{\div}(\widetilde{u}\phi)\mathrm{d}x\mathrm{d}t+\sum \limits_{e \in \mathcal{E}_{h}^{0} }  \int_{e} h_{e}^{-1}  [(\widetilde{u} ,\widetilde{n}) c_{n+1}^{(\delta t_{n},h)}] [ (\widetilde{u}, \widetilde{n}) \phi] \mathrm{d}s\\= \sum \limits_{T \in \mathcal{T}_{h} }  \int_{T} f^{\delta t_{n}}(c_{n+1}^{h})\widetilde{\div}(\widetilde{u}\phi)\mathrm{d}x\mathrm{d}t +\sum \limits_{e \in \partial Q_{-} }  \int_{e} \delta t_{n}h_{e}^{-1}  [(\widetilde{u} ,\widetilde{n}) c_{b}] [ (\widetilde{u}, \widetilde{n}) \phi] \mathrm{d}s \,\, \forall \, \phi \in \mathcal{V}_{h}.
\end{eqnarray*}
Let  us define the following quantities 
\begin{equation}
\alpha_{T}=\| \widetilde{\div}(\widetilde{u}c_{n+1}^{(\delta t_{n})})-f(c_{n+1}^{(\delta t_{n}, h)})\|_{0,T}\,\,
\text{and} \,\,\beta_{T}=\|f^{\delta t_{n}}(c_{n+1}^{h})- f(c_{n+1}^{\delta t_{n}, h}) \|_{0,T}
\end{equation}

\begin{equation}
 \alpha_{e}=\|[(\widetilde{u}, \widetilde{n}) c_{n+1}^{(\delta t_{n})}]\|_{0,e} \,\,
\text{and} \,\, \beta_{e}=\|[(\widetilde{u}, \widetilde{n}) c_{b}]\|_{0,e}. 
 \end{equation}
 
 We have also the following result expressed by an inequality.

\begin{theorem}

\begin{equation}\label{ress1}
\| F(c_{n+1}^{(\delta t_{n}, h)})\|_{\mathcal{V}^{*}} \preceq h^{k} \max{  ( ( \sum \limits_{T \in \mathcal{T}_{h} } \beta_{T}^{2})^{\frac{1}{2}}, \max{(( \sum \limits_{T \in \mathcal{T}_{h} }	\alpha_{T}^{2})^{\frac{1}{2}},( \sum \limits_{e \in \mathcal{E}_{h}^{0} }	h^{-1}_{e} \alpha_{e}^{2})^{\frac{1}{2}}+( \sum \limits_{e \in \mathcal{E}_{h}^{0} }	h^{-1}_{e} \beta_{e}^{2})^{\frac{1}{2}} }})). 
\end{equation} 
\end{theorem}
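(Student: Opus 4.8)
The plan is to estimate the dual norm $\|F(c_{n+1}^{(\delta t_n,h)})\|_{\mathcal{V}^{*}}$ by duality: I would test the residual against an arbitrary $\phi\in\mathcal{V}$ (with $\|\phi\|_{DG}\le 1$), split $\phi=(\phi-P_h\phi)+P_h\phi$, and use the Galerkin identity satisfied by $w:=c_{n+1}^{(\delta t_n,h)}$ on the $P_h\phi$ part together with the approximation estimates of Lemma~\ref{lamDG} and of (\ref{qin})--(\ref{reg}) on the $\phi-P_h\phi$ part.

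First I would rewrite the scheme (\ref{NEWFEM}) in terms of $w$. Using $c_{n+1}^{(\delta t_n,h)}=c_{n+1}^h-(1-\delta t_n)c_n^h$ and $f^{\delta t_n}(c_{n+1}^h)=\delta t_n f(c_n^h)+f'(c_n^h)(c_{n+1}^h-c_n^h)$ one gets $\mathcal{A}(w,\phi_h)=\sum_{T}\int_T f^{\delta t_n}(c_{n+1}^h)\widetilde{\div}(\widetilde{u}\phi_h)+\delta t_n\sum_{e\in\partial Q_-}\int_e h_e^{-1}[(\widetilde{u},\widetilde{n})c_b][(\widetilde{u},\widetilde{n})\phi_h]$ for all $\phi_h\in\mathcal{V}_h$. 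Subtracting this from the definition (\ref{stilpnnl2}) of $F$ shows that on $\mathcal{V}_h$ the residual reduces to the linearization/data defect,
$$\langle F(w),\phi_h\rangle=\sum_{T\in\mathcal{T}_h}\int_T\big(f^{\delta t_n}(c_{n+1}^h)-f(w)\big)\widetilde{\div}(\widetilde{u}\phi_h)\,\mathrm{d}x\mathrm{d}t+(\delta t_n-1)\sum_{e\in\partial Q_-}\int_e h_e^{-1}[(\widetilde{u},\widetilde{n})c_b][(\widetilde{u},\widetilde{n})\phi_h]\,\mathrm{d}s,$$
whose volume integrand has $L^2(T)$-norm $\beta_T$ and whose boundary part is controlled by $\beta_e$ (since $\delta t_n\in(0,1]$ by construction, $|\delta t_n-1|\le1$). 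For general $\phi\in\mathcal{V}$ I would then write $\langle F(w),\phi\rangle=\langle F(w),\phi-P_h\phi\rangle+\langle F(w),P_h\phi\rangle$ and expand the first term elementwise from (\ref{stilpnnl2}) and the definition of $\mathcal{A}$: its volume integrand is the element residual $\widetilde{\div}(\widetilde{u}w)-f(w)$ (of norm $\alpha_T$ on $T$), its interior-edge integrand carries the jump $[(\widetilde{u},\widetilde{n})w]$ (of norm $\alpha_e$), and its $\partial Q_-$ integrand carries $[(\widetilde{u},\widetilde{n})c_b]$ (of norm $\beta_e$).

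What remains is a chain of elementwise and edgewise Cauchy--Schwarz inequalities recombined by the discrete Cauchy--Schwarz inequality. In $\langle F(w),\phi-P_h\phi\rangle$ the factors $\widetilde{\div}(\widetilde{u}(\phi-P_h\phi))$ and $[(\widetilde{u},\widetilde{n})(\phi-P_h\phi)]$ are handled exactly as in the proof of Lemma~\ref{lamDG}: combining (\ref{qin}), (\ref{qim}), the trace inequality (\ref{tracei}), the shape regularity (\ref{reg}) and the $L^\infty(\partial T)$-bound on the normal trace from Theorem~\ref{trace21} (cf. (\ref{one2})--(\ref{two})) gives $\|\widetilde{\div}(\widetilde{u}(\phi-P_h\phi))\|_{0,T}\preceq h^k|\phi|_{k+1,T}$ and $h_e^{-1}\|[(\widetilde{u},\widetilde{n})(\phi-P_h\phi)]\|_{0,e}^2\preceq h^{2k}|\phi|_{k+1,T}^2$, so summing and using $\sum_T|\phi|_{k+1,T}^2\le\|\phi\|_{DG}^2$ bounds this term by $h^k\big((\sum_T\alpha_T^2)^{1/2}+(\sum_{e\in\mathcal{E}_h^0}h_e^{-1}\alpha_e^2)^{1/2}+(\sum_{e\in\mathcal{E}_h^0}h_e^{-1}\beta_e^2)^{1/2}\big)\|\phi\|_{DG}$. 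For $\langle F(w),P_h\phi\rangle$ I would use the reduced expression above; Cauchy--Schwarz produces the factor $(\sum_T\beta_T^2)^{1/2}$, and writing $P_h\phi=(P_h\phi-\phi)+\phi$, invoking Lemma~\ref{lamDG} on the first summand while bounding $\widetilde{\div}(\widetilde{u}\phi)$ and the $\partial Q_-$ contribution by $\|\phi\|_{DG}$, one obtains the factor $h^k(\sum_T\beta_T^2)^{1/2}$ together with a $\beta_e$-term already accounted for. Taking the supremum over $\|\phi\|_{DG}\le1$ and bounding a sum of nonnegative terms by a fixed multiple of their maximum (absorbed into $\preceq$) yields (\ref{ress1}).

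The main obstacle is the bookkeeping of the powers of $h$ and $h_e$ in the boundary and jump terms: every edge integral must be converted to volume quantities via (\ref{tracei}) and (\ref{reg}) while keeping the weights $h_e^{-1}$ of $\mathcal{A}$ matched with those entering $\alpha_e,\beta_e$, and one must verify that it is genuinely $h^k$ (not $h^{k-1/2}$) that survives the trace step — which is precisely what the combination leading to (\ref{one2}) guarantees. A secondary point is carrying the damping weight $(\delta t_n-1)$ harmlessly through the reduced identity and identifying cleanly which part of the final bound absorbs $\beta_e$.
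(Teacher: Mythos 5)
Your proposal follows essentially the same route as the paper's proof: estimate $\|F(c_{n+1}^{(\delta t_n,h)})\|_{\mathcal{V}^*}$ by duality, split the test function as $\phi=(\phi-P_h\phi)+P_h\phi$, use the discrete scheme to reduce $\langle F(\cdot),P_h\phi\rangle$ to the linearization defect measured by $\beta_T$, and treat $\langle F(\cdot),\phi-P_h\phi\rangle$ by elementwise and edgewise Cauchy--Schwarz combined with Lemma~\ref{lamDG}, (\ref{qin})--(\ref{qim}), (\ref{tracei}) and (\ref{reg}). You are in fact more careful than the paper on one point: you keep track of the boundary defect $(\delta t_n-1)\sum_{e\subset\partial Q_-}\int_e h_e^{-1}[(\widetilde u,\widetilde n)c_b][(\widetilde u,\widetilde n)P_h\phi]\,\mathrm{d}s$, which the paper silently discards when it asserts the reduced identity for $\langle F,P_h\phi\rangle$.

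The one place your argument does not close as written is the extraction of the factor $h^k$ in front of $(\sum_T\beta_T^2)^{1/2}$. After Cauchy--Schwarz against $\widetilde{\operatorname{div}}(\widetilde u\,P_h\phi)$ you split $P_h\phi=(P_h\phi-\phi)+\phi$; Lemma~\ref{lamDG} handles the first summand, but the second contributes $(\sum_T\beta_T^2)^{1/2}\,\|\widetilde{\operatorname{div}}(\widetilde u\phi)\|_{L^2(Q)}\le(\sum_T\beta_T^2)^{1/2}\|\phi\|_{DG}$ with no gain of $h^k$, so your decomposition only yields the consistency term at order one, not at order $h^k$ as claimed in (\ref{ress1}). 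You should be aware, however, that the paper's own proof has exactly the same soft spot: it bounds $(\sum_T\|\phi\|_{0,T}^2)^{1/2}$ by $h^k\|\phi\|_{\mathcal{T}_h,k+1}$ ``using Lemma~\ref{lamDG}'', which that lemma does not provide (it controls $\|\phi-P_h\phi\|_{DG}$, not $\|\phi\|_{L^2(Q)}$). So on this step you have reproduced, rather than introduced, the gap; everything else in your outline matches the paper's argument.
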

\begin{proof}
$$\langle F(c),\phi \rangle_{\mathcal{V}^{*},\mathcal{V}}=\langle F(c),\phi-P_{h}\phi \rangle_{\mathcal{V}^{*},\mathcal{V}}+\langle F(c),P_{h}\phi \rangle_{\mathcal{V}^{*},\mathcal{V}} $$
Since $P_{h}\in \mathcal{V} $, From (\ref{NEW}) it follows 
$$
\langle F(c_{n+1}^{(\delta t_{n}, h)}),P_{h}\phi \rangle_{\mathcal{V}^{*},\mathcal{V}}= \sum \limits_{T \in \mathcal{T}_{h} }  \int_{T}( f^{\delta t_{n}}(c_{n+1}^{h})- f(c_{n+1}^{\delta t_{n}, h}))\widetilde{\div}(\widetilde{u}P_{h}\phi)\mathrm{d}x\mathrm{d}t
$$
and it follows, from Cauchy–Schwarz inequality in $ L_{2}(T) $ and $ R^{q}$ with $q=\dim (\mathcal{T}_{h}) $ 
$$\langle F(c_{n+1}^{(\delta t_{n}, h)}),P_{h}\phi \rangle_{\mathcal{V}^{*},\mathcal{V}}\leq  \sum \limits_{T \in \mathcal{T}_{h} } \|f^{\delta t_{n}}(c_{n+1}^{h})- f(c_{n+1}^{\delta t_{n}, h}) \|_{0,T} \|P_{h}\phi\|_{0,T}$$

$$|\langle F(c_{n+1}^{(\delta t_{n}, h)}),P_{h}\phi \rangle_{\mathcal{V}^{*},\mathcal{V}} |\leq ( \sum \limits_{T \in \mathcal{T}_{h} } \|f^{\delta t_{n}}(c_{n+1}^{h})- f(c_{n+1}^{\delta t_{n}, h}) \|_{0,T}^{2})^{\frac{1}{2}}( \sum \limits_{T \in \mathcal{T}_{h} } \|P_{h}\phi\|_{0,T}^{2})^{\frac{1}{2}}.$$
Since $ P_{h} $ satisfies $ \sum \limits_{T \in \mathcal{T}_{h} }\|P_{h}\phi\|_{0,T}^{2} \leq  \sum \limits_{T \in \mathcal{T}_{h} }\|\phi\|_{0,T}^{2}  \,\, \forall \phi \in L^{2}(Q) $ (see \cite{HB}), we have
\begin{equation}
|\langle F(c_{n+1}^{(\delta t_{n}, h)}),P_{h}\phi \rangle_{\mathcal{V}^{*},\mathcal{V}} |\leq ( \sum \limits_{T \in \mathcal{T}_{h} } \beta_{T}^{2})^{\frac{1}{2}}( \sum \limits_{T \in \mathcal{T}_{h} } \|\phi\|_{0,T}^{2})^{\frac{1}{2}}
\end{equation}
And using lemma \ref{lamDG}, we have 
\begin{equation}\label{iph}
|\langle F(c_{n+1}^{(\delta t_{n}, h)}),P_{h}\phi \rangle_{\mathcal{V}^{*},\mathcal{V}} |\leq ( \sum \limits_{T \in \mathcal{T}_{h} } \beta_{T}^{2})^{\frac{1}{2}}h^{k} \|\phi\|_{\mathcal{T}_{h}, k+1}.
\end{equation}

\begin{eqnarray*}
\langle F(c_{n+1}^{(\delta t_{n}, h)}),\phi-P_{h}\phi \rangle_{\mathcal{V}^{*},\mathcal{V}} =\sum \limits_{T \in \mathcal{T}_{h} }  \int_{T}(\widetilde{\div}(\widetilde{u}c_{n+1}^{(\delta t_{n})})-f(c_{n+1}^{(\delta t_{n}, h)})))\widetilde{\div}(\widetilde{u}(\phi-P_{h}\phi))\mathrm{d}x\mathrm{d}t \\+\sum \limits_{e \in \mathcal{E}_{h}^{0} }  \int_{e} h_{e}^{-1} [(\widetilde{u} ,\widetilde{n})c_{n+1}^{(\delta t_{n})} ][(\widetilde{u}, \widetilde{n}) (\phi-P_{h}\phi)]\mathrm{d}s -\sum \limits_{e \in \partial Q_{-} }  \int_{e} h_{e}^{-1}[(\widetilde{u} ,\widetilde{n}) c_{b} ][(\widetilde{u}, \widetilde{n}) (\phi-P_{h}\phi) ] \mathrm{d}s
\end{eqnarray*}

\begin{eqnarray}\label{first}
\sum \limits_{T \in \mathcal{T}_{h} }  \int_{T}(\widetilde{\div}(\widetilde{u}c_{n+1}^{(\delta t_{n})})-f(c_{n+1}^{(\delta t_{n}, h)})\widetilde{\div}(\widetilde{u}(\phi-P_{h}\phi))\mathrm{d}x\mathrm{d}t \leq ( \sum \limits_{T \in \mathcal{T}_{h} }	\alpha_{T}^{2})^{\frac{1}{2}}( \sum \limits_{T \in \mathcal{T}_{h} } \|\widetilde{\div}(\widetilde{u}(\phi-P_{h}\phi) \|_{0,T}^{2})^{\frac{1}{2}}
\end{eqnarray}
\begin{equation}\label{second}
\sum \limits_{e \in \mathcal{E}_{h}^{0} }  \int_{e} h_{e}^{-1} [(\widetilde{u} ,\widetilde{n})c_{n+1}^{(\delta t_{n})} ][(\widetilde{u}, \widetilde{n}) (\phi-P_{h}\phi)]\mathrm{d}s \leq ( \sum \limits_{e \in \mathcal{E}_{h}^{0} }	h^{-1}_{e} \alpha_{e}^{2})^{\frac{1}{2}}( \sum \limits_{e \in \mathcal{E}_{h}^{0} }h^{-1}_{e} \|[(\widetilde{u}, \widetilde{n}) (\phi-P_{h}\phi)]\|_{0,e}^{2})^{\frac{1}{2}} 
\end{equation}
 \begin{equation}\label{tertio}
\sum \limits_{e \in \partial Q_{-} }  \int_{e} h_{e}^{-1} [(\widetilde{u} ,\widetilde{n})c_{b} ][(\widetilde{u}, \widetilde{n}) (\phi-P_{h}\phi)]\mathrm{d}s \leq ( \sum \limits_{e \in \mathcal{E}_{h}^{0} }	h^{-1}_{e} \beta_{e}^{2})^{\frac{1}{2}}( \sum \limits_{e \in \mathcal{E}_{h}^{0} }h^{-1}_{e} \|[(\widetilde{u}, \widetilde{n}) (\phi-P_{h}\phi)]\|_{0,e}^{2})^{\frac{1}{2}}.  
\end{equation} 
 
 The inequalities (\ref{first})-(\ref{tertio}) yield
  
\begin{equation}
|\langle F(c_{n+1}^{(\delta t_{n}, h)}),\phi-P_{h}\phi \rangle_{\mathcal{V}^{*},\mathcal{V}}| \leq \max{(( \sum \limits_{T \in \mathcal{T}_{h} }	\alpha_{T}^{2})^{\frac{1}{2}},( \sum \limits_{e \in \mathcal{E}_{h}^{0} }	h^{-1}_{e} \alpha_{e}^{2})^{\frac{1}{2}}+( \sum \limits_{e \in \mathcal{E}_{h}^{0} }	h^{-1}_{e} \beta_{e}^{2})^{\frac{1}{2}} })) \|\phi-P_{h}\phi\|_{DG}.
\end{equation}
Thus, it follows from lemma\ref{lamDG} 
\begin{equation}\label{iphi}
|\langle F(c_{n+1}^{(\delta t_{n}, h)}),\phi-P_{h}\phi \rangle_{\mathcal{V}^{*},\mathcal{V}}| \preceq    \max{( \sum \limits_{T \in \mathcal{T}_{h} }	\alpha_{T}^{2})^{\frac{1}{2}},( \sum \limits_{e \in \mathcal{E}_{h}^{0} }	h^{-1}_{e} \alpha_{e}^{2})^{\frac{1}{2}}+( \sum \limits_{e \in \mathcal{E}_{h}^{0} }	h^{-1}_{e} \beta_{e}^{2})^{\frac{1}{2}} })h^{k} \|\phi\|_{\mathcal{T}_{h}, k+1}.
\end{equation} 
From  (\ref{iph}) and (\ref{iphi}) we deduce 
\begin{equation}\label{ress1}
|\langle F(c_{n+1}^{(\delta t_{n}, h)}),\phi\rangle_{\mathcal{V}^{*},\mathcal{V}}| \preceq  \max{  ( ( \sum \limits_{T \in \mathcal{T}_{h} } \beta_{T}^{2})^{\frac{1}{2}}, \max{(( \sum \limits_{T \in \mathcal{T}_{h} }	\alpha_{T}^{2})^{\frac{1}{2}},( \sum \limits_{e \in \mathcal{E}_{h}^{0} }	h^{-1}_{e} \alpha_{e}^{2})^{\frac{1}{2}}+( \sum \limits_{e \in \mathcal{E}_{h}^{0} }	h^{-1}_{e} \beta_{e}^{2})^{\frac{1}{2}} }}))h^{k} \|\phi\|_{\mathcal{T}_{h}, k+1}.
\end{equation}

\end{proof}

\subsection{STILS  for semi linear conservations laws}
In the following, we assume that the problem (\ref{stilpnnl}) admits a unique solution  $ c \in \mathbb{V}:=  H_{0}(u,Q)\cap H^{k+1}(Q)$,  and   we will omit the dependency of the function according to the parameter $ \lambda $. Our aims are to give a  numerical methods for solving problem  (\ref{stilpnnl}) based on the classical finite element approximation of STILS formulation  and establish  posteriori estimations.
For this we shall  first  consider  first  a finite element approximation based in quadrilateral mesh by starting with the following  finite dimensional spaces  
\begin{equation}
V(\widehat{K})=\big\lbrace \phi \in C^{0}(Q),  \phi \mid \widehat{K} \in \widehat{Q_{k}}   \rbrace 
\end{equation}
where $\widehat{K}$ is so called  reference element and $\widehat{Q_{k}} $ is the space of polynomials of degree at most k in each variable, separately defined in $\widehat{K}.$ Let $ S $ be a class of invertible affine mapping defined on $\widehat{K}$ into $ \mathbb{R}^{d+1} $. For $K=F_{K}(\widehat{K})$ with $F_{K} \in S$, the finite element space can be defined by composition with the inverse of $F_{K} $  as follows
\begin{equation}
V(K)=\big\lbrace \rho:K \rightarrow \mathbb{R} : \rho=\widehat{\rho}\circ F_{K} \,\, \text{for some} \,\, \rho \in V(\widehat{K}) \rbrace. 
\end{equation}
Let $ \mathcal{T}_{h} $ be a triangulation of $ Q $ such that each  of its  element is transformation of $ \widehat{K}$ with some mapping in S. Thus we get the classical finite element approximation 
$$V_{h}=\big\lbrace \rho:Q \rightarrow \mathbb{R} : \rho \mid K  \in \ V(K)  \,\, \text{for all}  \,\,K  \rbrace.$$
In order to obtain the CFL condition stability of STILS-MT1 (see \cite{GM} ), we shall consider a strict rectangular mesh. 
Let 
\begin{equation}
\Pi:V\longrightarrow V_{h} \,\, \text{such that } \,\, \Pi q= q  \,\, \text{for all } \,\, q \in Q_{k} 
\end{equation}
be a linear operator.\\
Let  us recall the following approximation result  proved in \cite{PAR}, pp 103, Corollary 4.4.2.\\
\begin{lemma}
Let us suppose that $d\leq 2 $ and $ k \geq 1 $ then there exist $ C $ such that, for all $ 0 \leq m\leq k+1 $, $ c \in H^{k+1}(K) $ the following inequality holds 
\begin{equation}\label{raviart}
 \mid c-\Pi c \mid_{m,K} \leq \frac{h^{k+1}}{\rho^{m}} C_{\Pi,Q} \mid c\mid_{k+1,K}.
\end{equation}
\end{lemma}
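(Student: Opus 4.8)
This estimate is the classical affine–interpolation error bound; I would obtain it by the usual reference–element plus Bramble–Hilbert argument, for which \cite{PAR}, pp.~103, Corollary 4.4.2, contains the complete details. The plan is as follows. First reduce to the reference element: write $F_K\in S$ for the invertible affine map with $K=F_K(\widehat K)$, let $B_K$ denote its (constant) Jacobian, and for $c\in H^{k+1}(K)$ set $\widehat c=c\circ F_K\in H^{k+1}(\widehat K)$. Because $\Pi$ is defined by affine equivalence — it is built from the fixed reference operator $\widehat\Pi:H^{k+1}(\widehat K)\to V(\widehat K)$, which is legitimate here since $d\le 2$ and $k\ge 1$ give the embedding $H^{k+1}(\widehat K)\hookrightarrow C^{0}(\widehat K)$ — one has $\widehat{\Pi c}=\widehat\Pi\,\widehat c$, so that the pullback of $c-\Pi c$ is exactly $\widehat c-\widehat\Pi\,\widehat c$.

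Next I would apply the Bramble–Hilbert lemma on $\widehat K$. Since $\Pi q=q$ for all $q\in Q_k$ and $\mathbb{P}_k(\widehat K)\subset\widehat{Q_k}$, the bounded linear operator $\mathrm{Id}-\widehat\Pi:H^{k+1}(\widehat K)\to H^{m}(\widehat K)$ annihilates $\mathbb{P}_k(\widehat K)$; Bramble–Hilbert then yields a constant $\widehat C=\widehat C(\widehat K,k,m)$ with
\[
\mid\widehat c-\widehat\Pi\,\widehat c\mid_{m,\widehat K}\le\widehat C\,\mid\widehat c\mid_{k+1,\widehat K}.
\]
Then I would return to $K$ by the standard scaling inequalities for affine maps: for any $v\in H^{\ell}(K)$,
\[
\mid v\circ F_K\mid_{\ell,\widehat K}\le C\,\|B_K\|^{\ell}\,|\det B_K|^{-1/2}\,\mid v\mid_{\ell,K},\qquad \mid v\mid_{\ell,K}\le C\,\|B_K^{-1}\|^{\ell}\,|\det B_K|^{1/2}\,\mid v\circ F_K\mid_{\ell,\widehat K}.
\]
Using the second with $\ell=m$ on $c-\Pi c$, then the Bramble–Hilbert bound, then the first with $\ell=k+1$ on $c$, the Jacobian determinants cancel and one obtains
\[
\mid c-\Pi c\mid_{m,K}\le C\,\widehat C\,\|B_K^{-1}\|^{m}\,\|B_K\|^{k+1}\,\mid c\mid_{k+1,K}.
\]
Finally, the elementary geometric bounds $\|B_K\|\le h_K/\widehat\rho$ and $\|B_K^{-1}\|\le\widehat h/\rho$ (with $\widehat h,\widehat\rho$ depending only on $\widehat K$, and $\rho$ the inscribed–sphere diameter of $K$) give $\|B_K\|^{k+1}\le h_K^{k+1}/\widehat\rho^{\,k+1}\le h^{k+1}/\widehat\rho^{\,k+1}$ and $\|B_K^{-1}\|^{m}\le\widehat h^{\,m}/\rho^{m}$; absorbing all reference–element constants into a single $C_{\Pi,Q}$ produces
\[
\mid c-\Pi c\mid_{m,K}\le\frac{h^{k+1}}{\rho^{m}}\,C_{\Pi,Q}\,\mid c\mid_{k+1,K}.
\]

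The only delicate point is the affine–equivalence identity $\widehat{\Pi c}=\widehat\Pi\,\widehat c$ together with the exact reproduction of $\mathbb{P}_k$ by $\widehat\Pi$, since it is precisely this that allows the Bramble–Hilbert constant to be chosen uniformly over all $K\in\mathcal{T}_h$; once that is secured, the scaling estimates and the shape–regularity bounds on $B_K$ are entirely routine, and one may simply invoke \cite{PAR} for the remaining verifications.
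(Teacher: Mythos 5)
Your argument is correct and is exactly the classical Bramble--Hilbert-plus-affine-scaling proof that the paper itself does not reproduce but delegates wholesale to \cite{PAR}, pp.~103, Corollary 4.4.2. The reduction to the reference element via affine equivalence of $\Pi$, the use of $\mathbb{P}_k\subset\widehat{Q_k}$ to invoke Bramble--Hilbert, and the bounds $\|B_K\|\le h_K/\widehat\rho$, $\|B_K^{-1}\|\le\widehat h/\rho$ are precisely the ingredients of the cited result, so there is nothing to add.
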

  The above lemma provides us  existence of $ C>0 $ such that  the following inequalities hold for any $ c \in H(u,T)\cap H^{k+1}(T) $, 
\begin{equation}\label{l2}
\|c-\Pi c \|_{0,T} \leq C h^{k+1} |c|_{k+1,T} \,\, \forall \,\, T \in \mathcal{T}_{h}
\end{equation}
and 
\begin{equation}\label{h1}
\|\widetilde{\nabla}( c-\Pi c) \|_{0,T} \leq C h^{k} |c|_{k+1,T} \,\, \forall \,\, T \in \mathcal{T}_{h}.
\end{equation}
As in the proof of lemma \ref{lamDG} , there is a non negative constant $ C_{u}  $ such that :
\begin{equation}\label{divconv}
\|\widetilde{\div}(\widetilde{u}(c-\Pi c)) \|_{0,T} \leq C_{u}h^{k} |c|_{k+1,T} \,\, \forall \,\, T \in \mathcal{T}_{h}
\end{equation}

\subsubsection{STILS and Picard's itaration}
In this section, we suppose that $f$ is $k-$ Lipschitz with $k<\frac{1}{c_{p}}.$ Then the  mapping T  defined by (\ref{operator1})- (\ref{operator}) is a strict contraction and  thus  we shall use Picard's iteration algorithm for the linearization  of (\ref{eq:stilln})-(\ref{CD}).\\
The Picard's iteration in this context is given by following scheme:\\
\begin{algo}
\begin{enumerate}
\end{enumerate}

\begin{itemize}
\item Start STILS-MT1 with some given $ C^{0}$
\item Find $c^{h}_{n+1} \in \mathbb{V}_{h}$ from $c^{h}_{n}$ by the formula
\begin{equation}\label{atlispicard}
\int_{Q}\widetilde{\div}(\widetilde{u}c^{h}_{n+1})\widetilde{\div}(\widetilde{u}\phi_{h})\mathrm{d}x\mathrm{d}t+\lambda\int_{Q}\widetilde{\nabla}c_{n+1}^{h}\widetilde{\nabla}\phi \mathrm{d}x\mathrm{d}t  =\int_{Q}(f(c^{h}_{n})-\widetilde{\div}(\widetilde{u}C_{b} ))\widetilde{\div}(\widetilde{u}\phi_{h})\mathrm{d}x\mathrm{d}t  \,\, \forall \,\,  \phi_{h} \in  \mathbb{V}_{h}.
\end{equation}
\end{itemize}
\end{algo}

\subsection{STILS adaptive Newton method }

Since the problem  (\ref{stilpnnl}) has  a unique solution $ \mathbb{V}=H^{k+1}(Q)\cap H_{0}(u,Q) $. Then the problem can be written as follows 
\begin{equation}\label{NLEN}
\text{find} \,\, c \in \mathbb{V} \,\,\text{ such \,\, that} \,\,F_{\lambda}(c)=0
\end{equation} 

where 
$$ F_{\lambda}:\mathbb{V} \longrightarrow \mathcal{V}^{*} $$
such that 
\begin{equation}\label{billamda}
\langle F_{\lambda}(c),\phi \rangle_{\mathcal{V}^{*},\mathcal{V}} :=\int_{Q}\widetilde{\div}(\widetilde{u}c)\widetilde{\div}(\widetilde{u}\phi)\mathrm{d}x\mathrm{d}t-\lambda\int_{Q}\widetilde{\nabla}c\widetilde{\nabla}\phi \mathrm{d}x\mathrm{d}t  -\int_{Q}f(c)\widetilde{\div}(\widetilde{u}\phi)\mathrm{d}x\mathrm{d}t  \,\, \forall \,\,  \phi \in  \mathbb{V}.
\end{equation}

Since $f$ is differentiable, then $ F_{\lambda} $ is differentiable and we have 

\begin{equation}\label{billamda}
\beta_{\lambda}(c,\rho,\phi)=:\langle F_{\lambda}^{'}(c)\rho,\phi \rangle_{\mathcal{V}^{*},\mathcal{V}}=\int_{Q}\widetilde{\div}(\widetilde{u}\rho)\widetilde{\div}(\widetilde{u}\phi)\mathrm{d}x\mathrm{d}t-\lambda\int_{Q}\widetilde{\nabla}\rho\widetilde{\nabla}\phi \mathrm{d}x\mathrm{d}t  -\int_{Q}f^{'}(c)\rho\widetilde{\div}(\widetilde{u}\phi)\mathrm{d}x\mathrm{d}t  \,\, \forall \,\,  \phi_{h} \in  \mathbb{V}.
\end{equation}
Let us define also the following linear form in $\mathbb{V}$
\begin{equation}\label{llamda}
\L_{\lambda}(\rho,\phi)=\langle F_{\lambda}(\rho),\phi \rangle_{\mathcal{V}^{*},\mathcal{V}}. 
\end{equation}
We assume that $F$ invertible ,  inserting (\ref{billamda})and  (\ref{llamda}) in (\ref{DAMP}) we get
\begin{equation}\label{NEW2}
\beta_{\lambda}(c_{n},c_{n+1},\phi)=\beta_{\lambda}(c_{n},c_{n},\phi)-\delta t_{n}L_{\lambda}(c_{n},\phi) \,\, \text{for all } \, \phi \in \mathcal{V} .
\end{equation}

Let $ c_{n}^{h}$ be the finite element approximation  of $ c_{n}$ (\ref{NEW}).
We obtain the the following FEM adaptive-Newton 
\begin{equation}\label{NEWFEM2}
\beta_{\lambda}(c_{n}^{h},c_{n+1}^{h},\phi)=\beta_{\lambda}(c_{n}^{h},c_{n}^{h},\phi)-\delta t_{n}L_{\lambda}(c_{n}^{h},\phi) \,\, \text{for all } \, \phi \in \mathcal{V}_{h}.
\end{equation}

 By introducing the following notation 
\begin{equation}
c_{n+1}^{(\delta t_{n},h)}:=c_{h}^{n+1}-(1-\delta t_{n})c_{h}^{n}
\end{equation}
and 
\begin{equation}
f^{\delta t_{n}}(c_{n+1}^{h}):=\delta t_{n}f(c_{n}^{h})+f^{'}(c_{n}^{h})(c_{n+1}^{h}-c_{n}^{h} )
\end{equation}
it follows from (\ref{NEWFEM2}) the following result 

\begin{equation}\label{TRANSF}
 \int_{Q}\widetilde{\div}(\widetilde{u}c_{n+1}^{(\delta t_{n},h}))\widetilde{\div}(\widetilde{u}\phi)\mathrm{d}x\mathrm{d}t 
+\lambda\int_{Q}\widetilde{\nabla}c_{n+1}^{(\delta t_{n},h)}\widetilde{\nabla}\phi \mathrm{d}x\mathrm{d}t=\int_{Q}f^{\delta t_{n}}(c_{n+1}^{h})\widetilde{\div}(\widetilde{u}\phi)\mathrm{d}x\mathrm{d}t  \,\, \text{for all } \, \phi \in \mathbb{V}_{h}. 
\end{equation}
We get also the  following result.
\begin{theorem}
\begin{equation}\label{estimation}
\|F_{\lambda}(c_{n+1}^{(\delta t_{n},h)})\|_{\mathbb{V}^{*}} \preceq h^{k}( (\sum \limits_{T \in \mathcal{T}_{h} }\alpha_{T} ^{2})^{\frac{1}{2}}+  (\sum \limits_{T \in \mathcal{T}_{h} }\beta_{T} ^{2})^{\frac{1}{2}}+\lambda(\sum \limits_{T \in \mathcal{T}_{h} }\gamma_{T} ^{2})^{\frac{1}{2}})
\end{equation}
where $$\alpha_{T}=\| ((\widetilde{\div}(\widetilde{u}c_{n+1}^{(\delta t_{n},h}))-f^{\delta t_{n}}(c_{n+1}^{h}))\|_{0,T} \, ,\, \beta_{T}= \|f^{\delta t_{n}}(c_{n+1}^{h})-f(c_{n+1}^{(h,\delta t_{n})}) \|_{0,T} \,\text{and}\,\,  \gamma_{T}=\|\widetilde{\nabla}c_{n+1}^{(\delta t_{n},h)}\|_{0,T}$$
\end{theorem}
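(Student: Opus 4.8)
The plan is to reproduce, for the penalized STILS--Newton scheme, the a posteriori argument that led to the discontinuous Galerkin estimate \eqref{ress1}, now with the $L^{2}$-projection $P_{h}$ replaced by the interpolation operator $\Pi$ of \eqref{l2}--\eqref{divconv}. The only genuinely new contribution is the gradient penalization term, which is responsible for the summand $\lambda\big(\sum_{T}\gamma_{T}^{2}\big)^{1/2}$ on the right-hand side of \eqref{estimation}.

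First I would fix $\phi\in\mathbb{V}$ and split
$$\langle F_{\lambda}(c_{n+1}^{(\delta t_{n},h)}),\phi\rangle_{\mathcal{V}^{*},\mathcal{V}}=\langle F_{\lambda}(c_{n+1}^{(\delta t_{n},h)}),\phi-\Pi\phi\rangle_{\mathcal{V}^{*},\mathcal{V}}+\langle F_{\lambda}(c_{n+1}^{(\delta t_{n},h)}),\Pi\phi\rangle_{\mathcal{V}^{*},\mathcal{V}}.$$
For the second term, since $\Pi\phi\in\mathbb{V}_{h}$, the identity \eqref{TRANSF} (equivalently the Newton--Galerkin scheme \eqref{NEWFEM2}) shows that the $\widetilde{\div}$--$\widetilde{\div}$ and $\widetilde{\nabla}$--$\widetilde{\nabla}$ parts of $F_{\lambda}(c_{n+1}^{(\delta t_{n},h)})$ are exactly cancelled by $\int_{Q}f^{\delta t_{n}}(c_{n+1}^{h})\,\widetilde{\div}(\widetilde{u}\Pi\phi)\,\mathrm{d}x\mathrm{d}t$, so that only the linearization defect survives:
$$\langle F_{\lambda}(c_{n+1}^{(\delta t_{n},h)}),\Pi\phi\rangle_{\mathcal{V}^{*},\mathcal{V}}=\sum_{T\in\mathcal{T}_{h}}\int_{T}\big(f^{\delta t_{n}}(c_{n+1}^{h})-f(c_{n+1}^{(h,\delta t_{n})})\big)\widetilde{\div}(\widetilde{u}\Pi\phi)\,\mathrm{d}x\mathrm{d}t.$$
An elementwise Cauchy--Schwarz inequality, then the discrete Cauchy--Schwarz inequality, the $L^{2}$-stability of $\Pi$ and $\widetilde{u}\in L^{\infty}(Q)$ bound this by a multiple of $\big(\sum_{T}\beta_{T}^{2}\big)^{1/2}h^{k}\|\phi\|_{\mathcal{T}_{h},k+1}$, the factor $h^{k}$ arising as in Lemma \ref{lamDG}.

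For the first term I would expand $F_{\lambda}$ with its definition \eqref{billamda}; since $f(c_{n+1}^{(h,\delta t_{n})})\in L^{2}(Q)$ this produces
$$\langle F_{\lambda}(c_{n+1}^{(\delta t_{n},h)}),\phi-\Pi\phi\rangle_{\mathcal{V}^{*},\mathcal{V}}=\sum_{T}\int_{T}\big(\widetilde{\div}(\widetilde{u}c_{n+1}^{(\delta t_{n},h)})-f(c_{n+1}^{(h,\delta t_{n})})\big)\widetilde{\div}(\widetilde{u}(\phi-\Pi\phi))\,\mathrm{d}x\mathrm{d}t+\lambda\sum_{T}\int_{T}\widetilde{\nabla}c_{n+1}^{(\delta t_{n},h)}\cdot\widetilde{\nabla}(\phi-\Pi\phi)\,\mathrm{d}x\mathrm{d}t.$$
Writing $\widetilde{\div}(\widetilde{u}c_{n+1}^{(\delta t_{n},h)})-f(c_{n+1}^{(h,\delta t_{n})})=\big(\widetilde{\div}(\widetilde{u}c_{n+1}^{(\delta t_{n},h)})-f^{\delta t_{n}}(c_{n+1}^{h})\big)+\big(f^{\delta t_{n}}(c_{n+1}^{h})-f(c_{n+1}^{(h,\delta t_{n})})\big)$, the first sum is controlled by elementwise Cauchy--Schwarz and \eqref{divconv} by $h^{k}\big(\sum_{T}(\alpha_{T}+\beta_{T})^{2}\big)^{1/2}\|\phi\|_{\mathcal{T}_{h},k+1}$, while the penalization sum is controlled by Cauchy--Schwarz and \eqref{h1} by $\lambda h^{k}\big(\sum_{T}\gamma_{T}^{2}\big)^{1/2}\|\phi\|_{\mathcal{T}_{h},k+1}$. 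Adding the three contributions, dividing by $\|\phi\|_{\mathcal{T}_{h},k+1}$ and taking the supremum over $\phi\in\mathbb{V}$ yields \eqref{estimation}.

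The Cauchy--Schwarz manipulations and the invocation of the approximation estimates \eqref{l2}, \eqref{h1}, \eqref{divconv} are routine bookkeeping. The step that must be carried out with care is the Galerkin orthogonality used for $\langle F_{\lambda}(c_{n+1}^{(\delta t_{n},h)}),\Pi\phi\rangle$: one has to check that inserting $c_{n+1}^{(\delta t_{n},h)}=c_{n+1}^{h}-(1-\delta t_{n})c_{n}^{h}$ and $f^{\delta t_{n}}(c_{n+1}^{h})=\delta t_{n}f(c_{n}^{h})+f'(c_{n}^{h})(c_{n+1}^{h}-c_{n}^{h})$ into the damped Newton identity \eqref{NEW2} is precisely equivalent to \eqref{TRANSF}, so that testing with $\Pi\phi\in\mathbb{V}_{h}$ leaves exactly the residual $\int_{T}\big(f^{\delta t_{n}}(c_{n+1}^{h})-f(c_{n+1}^{(h,\delta t_{n})})\big)\widetilde{\div}(\widetilde{u}\Pi\phi)$ and nothing else. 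This is where the structure of the damped Newton--Galerkin iteration is genuinely used; everything else follows the template of \eqref{ress1}.
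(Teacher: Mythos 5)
Your proposal follows essentially the same route as the paper's proof: insert the interpolant $\Pi\phi$, use the damped Newton--Galerkin identity \eqref{TRANSF} to cancel the discrete part and leave only the linearization defect $f^{\delta t_{n}}(c_{n+1}^{h})-f(c_{n+1}^{(h,\delta t_{n})})$, then bound the $\phi-\Pi\phi$ contributions term by term with Cauchy--Schwarz and the approximation estimates \eqref{h1}, \eqref{divconv}, and conclude by taking the supremum over $\phi$. The only difference is cosmetic bookkeeping (you split the residual explicitly into the $\alpha_{T}$ and $\beta_{T}$ pieces before applying Cauchy--Schwarz), so the argument matches the paper's.
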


\begin{proof}

\begin{equation}\label{stepp1}
\langle F_{\lambda}(c_{n+1}^{(\delta t_{n},h)}),\phi \rangle=\int_{Q}(\widetilde{\div}(\widetilde{u}c_{n+1}^{(\delta t_{n},h})-f(c_{n+1}^{(h,\delta t_{n})}))\widetilde{\div}(\widetilde{u}\phi)\mathrm{d}x\mathrm{d}t +\lambda\int_{Q}\widetilde{\nabla}c_{n+1}^{(\delta t_{n},h)}\widetilde{\nabla}\phi
 \mathrm{d}x\mathrm{d}t.  
\end{equation}
By  adding and subtracting  $ \phi_{h}=\Pi \phi $ in (\ref{stepp1}) and using (\ref{TRANSF}), the following result holds

\begin{eqnarray*}
\langle F_{\lambda}(c_{n+1}^{(\delta t_{n},h)}),\phi \rangle=\int_{Q}(\widetilde{\div}(\widetilde{u}c_{n+1}^{(\delta t_{n},h})-f(c_{n+1}^{(h,\delta t_{n})}))\widetilde{\div}(\widetilde{u}(\phi-\phi_{h}))\mathrm{d}x\mathrm{d}t \\
+\lambda\int_{Q}\widetilde{\nabla}c_{n+1}^{(\delta t_{n},h)}\widetilde{\nabla}(\phi-\phi_{h})\mathrm{d}x\mathrm{d}t +\int_{Q}(f^{\delta t_{n}}(c_{n+1}^{h})-f(c_{n+1}^{(h,\delta t_{n})}))\widetilde{\div}(\widetilde{u}\phi_{h})\mathrm{d}x\mathrm{d}t 
\end{eqnarray*}

\begin{equation}
\int_{Q}((\widetilde{\div}(\widetilde{u}c_{n+1}^{(\delta t_{n},h}))-f^{\delta t_{n}}(c_{n+1}^{h}))\widetilde{\div}(\widetilde{u}(\phi-\phi_{h}))\mathrm{d}x\mathrm{d}t \leq \sum \limits_{T \in \mathcal{T}_{h} }
\| ((\widetilde{\div}(\widetilde{u}c_{n+1}^{(\delta t_{n},h}))-f^{\delta t_{n}}(c_{n+1}^{h}))\|_{0,T}\|\widetilde{\div}(\widetilde{u}(\phi-\phi_{h}))\|_{0,T}
\end{equation}
Applying Cauchy-Schwarz inequality leads to 
\begin{equation}
\int_{Q}((\widetilde{\div}(\widetilde{u}c_{n+1}^{(\delta t_{n},h}))-f^{\delta t_{n}}(c_{n+1}^{h}))\widetilde{\div}(\widetilde{u}(\phi-\phi_{h}))\mathrm{d}x\mathrm{d}t \leq (\sum \limits_{T \in \mathcal{T}_{h} }\|(\widetilde{\div}(\widetilde{u}c_{n+1}^{(\delta t_{n},h})-f^{\delta t_{n}}(c_{n+1}^{h}))\|_{0,T}^{2})^{\frac{1}{2}}(\sum \limits_{T \in \mathcal{T}_{h} } \|\widetilde{\div}(\widetilde{u}(\phi-\phi_{h}))\|_{0,T}^{2})^{\frac{1}{2}}
\end{equation}
and recalling (\ref{divconv}), it follows 

\begin{equation}\label{alpha}
\int_{Q}((\widetilde{\div}(\widetilde{u}c_{n+1}^{(\delta t_{n},h}))-f^{\delta t_{n}}(c_{n+1}^{h}))\widetilde{\div}(\widetilde{u}(\phi-\phi_{h}))\mathrm{d}x\mathrm{d}t \leq C_{u} (\sum \limits_{T \in \mathcal{T}_{h} }\| ((\widetilde{\div}(\widetilde{u}c_{n+1}^{(\delta t_{n},h}))-f^{\delta t_{n}}(c_{n+1}^{h}))\|_{0,T}^{2})^{\frac{1}{2}} h^{k} |\phi|_{k+1,Q}.
\end{equation}
Thus 
\begin{equation}\label{beta}
\int_{Q}(f^{\delta t_{n}}(c_{n+1}^{h})-f(c_{n+1}^{(h,\delta t_{n})}))\widetilde{\div}(\widetilde{u}\phi_{h})\mathrm{d}x\mathrm{d}t \preceq (\sum \limits_{T \in \mathcal{T}_{h} }\alpha_{T}^{2})^{\frac{1}{2}} h^{k} |\phi|_{k+1,Q}.
\end{equation}
It follows also  

$$ \lambda\int_{Q}\widetilde{\nabla}c_{n+1}^{(\delta t_{n},h)}\widetilde{\nabla}(\phi-\phi_{h})\mathrm{d}x\mathrm{d}t \leq  (\sum \limits_{T \in \mathcal{T}_{h} } \|\widetilde{\nabla}c_{n+1}^{(\delta t_{n},h)}\|_{0,T} \| \widetilde{\nabla}(\phi-\phi_{h}) \|_{0,T}.$$
Using inequality (\ref{l2}), the following estimation holds 
\begin{equation}\label{gamma}
\lambda\int_{Q}\widetilde{\nabla}c_{n+1}^{(\delta t_{n},h)}\widetilde{\nabla}(\phi-\phi_{h})\mathrm{d}x\mathrm{d}t \preceq  \lambda h^{k} (\sum \limits_{T \in \mathcal{T}_{h} } \gamma_{T}^{2})^{\frac{1}{2}} |\phi |_{k+1,Q}.
\end{equation}
It follows from (\ref{alpha}),(\ref{beta}) and (\ref{gamma}).
$$\langle F_{\lambda}(c_{n+1}^{(\delta t_{n},h)}),\phi \rangle \preceq h^{k}( (\sum \limits_{T \in \mathcal{T}_{h} }\alpha_{T} ^{2})^{\frac{1}{2}}+  (\sum \limits_{T \in \mathcal{T}_{h} }\beta_{T} ^{2})^{\frac{1}{2}}+\lambda(\sum \limits_{T \in \mathcal{T}_{h} }\gamma_{T} ^{2})^{\frac{1}{2}}) |\phi |_{k+1,Q}.$$
Furthermore $ \|F_{\lambda}(c_{n+1}^{(\delta t_{n},h)})\|_{\mathbb{V}^{*},\mathbb{V}} =\sup \langle F_{\lambda}(c_{n+1}^{(\delta t_{n},h)}),\phi \rangle $ and $ |\phi |_{k+1,Q} \preceq \|\phi\|_{\mathbb{V}},$ then we get the results 
\end{proof}

Since $\delta t_{n}=1$ if the Adaptive-Newton converges   $ \|F_{\lambda}(c_{n+1}^{(\delta t_{n},h)})\|_{\mathbb{V}^{*}}  $ is a reasonnable approximation, moreover under certain conditions on  f we can show that  $ \|c-c_{n+1}^{(\delta t_{n},h)}\|_{\mathbb{V}} $ is equivalent to $ \|F_{\lambda}(c_{n+1}^{(\delta t_{n},h)})\|_{\mathbb{V}^{*}}  $

\subsection{Numerical experiment }
Let we consider  the  following one dimension  hyperbolic conservations laws with linear convection and  stiff sources terms (see \cite{STIFF}).
$$ f(s)=-\mu s (s-1)(s-\frac{1}{2})$$
and initial data 
$$c_{0}(x)= \left \{ \begin{array}{l}
1 \mbox{ if } x \leq 0.3 \\ 0 \ \mbox{if} \, x > 0.3 \  \end{array}
\right .$$ The exact solution approaches the following waves solution $\omega(x-t) $ with

$$ \omega (x)= \left \{ \begin{array}{l}
0 \mbox{ if } c_{0}(x) <  \frac{1}{2} \\ \frac{1}{2}  \ \mbox{if}  \, c_{0}(x) =\frac{1}{2}
\\ 1  \ \mbox{if}  \, c_{0}(x) > \frac{1}{2} \ \end{array}
\right .$$
\begin{exemple}
We first choose $\mu$  such that  $T$ is a contraction for instance $\mu=\frac{1}{7}$, and we will compute the solution of (\ref{eq:stilsn})-(\ref{CD}) by using Picard iteration and simple finite element method  and (\ref{stilpnnl}) by Picard iteration and STILS-MT.   
\begin{figure}[!h]\label{f1}
\includegraphics[scale=0.4]{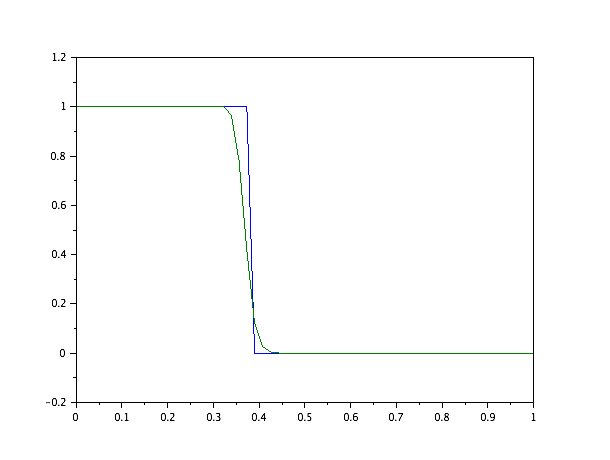}\label{fig1}
\includegraphics[scale=0.4]{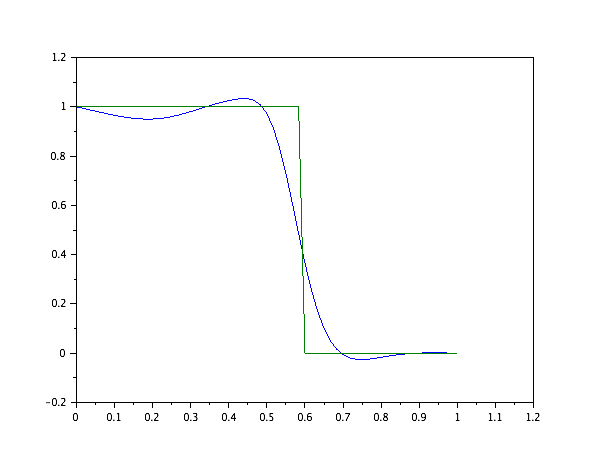}\label{fig2}
\caption{ left Picard's iteration with STILS-MT1 with penalization $\lambda=\frac{5}{12},$ right  Picard's iteration with  simple finite element method    }
\end{figure}
The mesh size of the space  is $\frac{1}{60}$ and the times steep $\frac{1}{65}$ which give $ 60\times 65 $ element in space-time. The solution is presented at $t=\frac{1}{4}$ in figure $1.$  
\end{exemple}
\begin{exemple}
Let we  choose now  $\mu=7$ and  compute the solution of by simple finite element method and 
STILS-MT1 with penalization $\lambda=\frac{5}{12}$ and using Newton Raphson iteration for the semi linearity.
\begin{figure}[!h]\label{f2}
\includegraphics[scale=0.4]{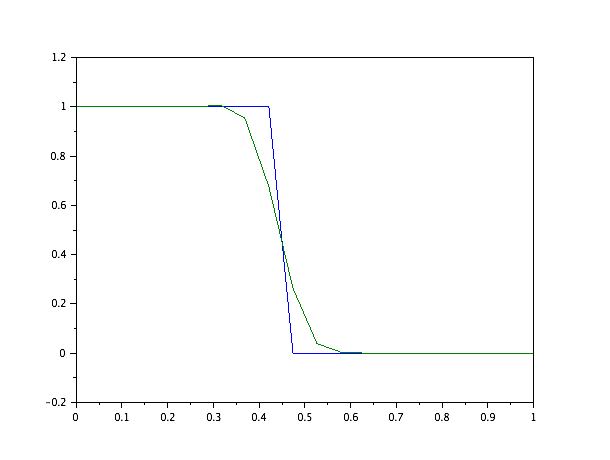}\label{fig3}
\includegraphics[scale=0.4]{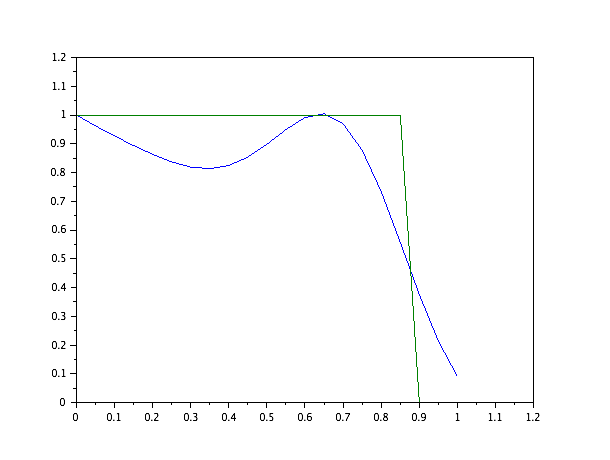}\label{fig4}
\caption{ left Newton Adaptatif Raphson's iteration with STILS-MT1 with penalization $\lambda=\frac{5}{12}$,rigth  Newton adaptaif Raphson's iteration with  simple finite element method    }
\end{figure}
The mesh size of the space  is $\frac{1}{20}$ and the times steep $\frac{1}{25}$ which give $ 20\times 25 $ element in space-time. The solution is presented at $t=\frac{1}{4}.$ 
\end{exemple}
Both  numerical methods can be used   to tame the spurious oscillations produced by STILS-MT and classical finite element methods when advection problem is solved. In the case of  simple finite element methods, we have spurious diffusion  for this  semi-linear conservation, on the other hand the same fact can be obtaining when penalization version is used  but it can be controlled by the parameter $\lambda$. Moreover, STILS-MT can not be used for simple finite element and that gives an important time calculation. We can clearly see that STILS-MT with penalization provides an effective methods   for solving semi linear conservation law numerically.

\end{document}